\documentclass[11pt, oneside]{amsart}
\usepackage[colorlinks=true,allcolors=blue!80!black]{hyperref}
\usepackage{tikz, tikz-cd}
\usetikzlibrary{matrix, snakes, patterns, shadows.blur, backgrounds}

\usepackage{amssymb, amsthm}
\usepackage{enumerate, amsfonts,epsfig,color}
\usepackage{MnSymbol}
\usepackage[margin=1.25in]{geometry}                		
\usepackage{graphicx}
\usepackage{subcaption}
\usepackage{autobreak}
\usepackage{amsmath,amscd}	
\usepackage{mathtools}
\usepackage{tikz-cd}
\usepackage{soul}
\usepackage{faktor}
\usepackage{braids}
\usetikzlibrary{braids}

\usepackage{xypic}

\input xy
\xyoption{all}


\newtheorem {theorem}{Theorem}[section]
\newtheorem {lemma} [theorem] {Lemma}
\newtheorem {proposition} [theorem] {Proposition}
\newtheorem {corollary} [theorem] {Corollary}

\newtheorem {remark}[theorem]{Remark}

\newtheorem{mainthm}{Theorem}


\theoremstyle{definition}
\newtheorem{definition}[theorem]{Definition}

\newtheorem{example}[theorem]{Example}


%

\def\C {\mathbb C}
\def\R {\mathbb R}

\def\PP {\mathbb P}
\def\Hy{\mathbb{H}}

\def\F {\mathbb F}
\def\Z {\mathbb{Z}}

\def\tB {\widetilde{B}}
\def\tP {\widetilde{P}}

\def\Aut {\mathrm{Aut}}
\def\Out {\mathrm{Out}}

\def\Mod {\mathrm{Mod}}


\DeclareMathOperator{\Isom}{Isom}

\DeclareMathOperator{\SO}{SO}

\DeclareMathOperator{\PSL}{PSL}

\DeclareMathOperator{\Bl}{Bl}
\DeclareMathOperator{\GL}{GL}
\DeclareMathOperator{\SL}{SL}

\DeclareMathOperator{\Diff}{Diff}


\title[]{CAT(0) geometry of complex curve complements and families}

\author{Corey Bregman}
\address{Department of Mathematics \\ Tufts University}
\email{Corey.Bregman@tufts.edu}
\urladdr{https://sites.google.com/view/cbregman}

\author{Anatoly Libgober}
\address{Department of Mathematics, Statistics and Computer Science\\ University of Illinois at Chicago}
\email{libgober@uic.edu}
\urladdr{http://homepages.math.uic.edu/~libgober/} 

 \author{Kejia Zhu}
\address{Department of Mathematics\\ University of California at Riverside}
 \email{kzhumath@gmail.com}
\urladdr{https://sites.google.com/view/kejiazhu} 

\begin{document}
\maketitle

\begin{abstract}

Motivated by the question of whether braid groups are CAT(0), we investigate the CAT(0) behavior of fundamental groups of plane curve complements and certain universal families. If $C$ is the branch locus of a generic projection of a smooth, complete intersection surface to $\PP^2$, we show that $\pi_1(\PP^2\setminus C)$ is CAT(0). In the other direction, we prove that the fundamental group of the universal family associated with the singularities of type $E_6$, $E_7$, and $E_8$ is not CAT(0). 
\end{abstract}

\section{Introduction}

The purpose of the present article is to investigate some properties of complex plane curve complements and their fundamental groups via the lens of geometric group theory.  On the one hand, plane curve singularities bear an intimate connection with the topology of 3-dimensional iterated torus link complements (see, for example, \cite{eisenbud2016three}). On the other hand, work going back to Zariski \cite{zariski1936poincare} and Moishezon \cite{Moishezon81} highlights the close relationship between fundamental groups of many plane curves complements and braid groups. More generally, it turns out that Artin groups, as natural generalizations of braid groups, are also related to the fundamental groups of some specific plane curves (cf.\cite[Theorem 2.6]{artal2018wirtinger}). Since both braid groups, more generally Artin groups and 3-manifold groups (with or without boundaries) are well-studied objects in geometric group theory for the richness of their intrinsic geometry, fundamental groups of plane curve complements form a natural setting to explore commonalities between these two classes.

Fundamental groups of plane curve complement fit into the larger class of \emph{quasi-projective groups}, \emph{i.e.} fundamental groups of smooth, quasi-projective varieties. Both classes are broad, and restrictions on such groups can often be quite subtle. Nevertheless, some of the topological connections described above are reflected even in the larger quasi-projective setting.
For example, Friedl--Suciu \cite{friedl2014kahler} have shown that if a quasi-projective group $G$ is isomorphic to the fundamental group of a compact 3-manifold $N$ with empty or toroidal boundary, then $N$ is a graph manifold. In this sense, $G$ also resembles the fundamental group of an iterated torus link complement, as in the case of a plane curve singularity.  
 

Geometrically, results of Leeb \cite{Leeb1995} imply that graph manifolds with non-empty boundary (and hence iterated torus link complements) admit Riemannian metrics of nonpositive curvature. In particular, their fundamental groups are CAT(0) in the sense that they admit proper, cocompact actions on CAT(0) spaces. CAT(0) geometry, introduced by Gromov \cite{Gromov}, is a natural extension of the concept of a nonpositive curvature to general metric spaces and conjecturally, braid groups are CAT(0) as well. Motivated by the intrinsic connections between 3-dimensional graph manifold groups, braid groups, and CAT(0) geometry, in this paper, we focus on the relation between fundamental groups of plane curve complements and CAT(0) geometry. Our principal goal is to provide both positive and negative results in this vein, as well as raise some questions for further investigation. In the positive direction, we show
\begin{mainthm}\label{thm:mainA}
Let $C\subset \PP^2$ be a plane curve. 
\begin{enumerate}
    \item Fix $O\in\PP^2\setminus C$ and let $L$ be the union of singular fibers from the projection of the blow-up $\Bl_O(\PP^2)\setminus C\rightarrow \PP^1$. If  $\Bl_O(\PP^2)\setminus (C\cup L)$ fibers with finite monodromy, then $\pi_1(\Bl_O(\PP^2)\setminus (C\cup L))$ is CAT(0).
    \item If $C$ is the branch locus of a generic finite map $X\rightarrow \PP^2$, arising from a smooth complete intersection surface $X\subset \PP^N$, then $\pi_1(\PP^2\setminus C)$ is CAT(0).
\end{enumerate}

\end{mainthm}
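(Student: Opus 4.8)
\emph{Strategy.} The plan is to prove (1) directly, by identifying the fundamental group as (virtually) a product of free groups and building a geometric action on a product of trees, and then to deduce (2) from (1) by locating a point $O$ at which the monodromy hypothesis of (1) holds and transferring the conclusion to $\PP^2\setminus C$. For (1): put $E=\Bl_O(\PP^2)\setminus(C\cup L)$ and $G=\pi_1(E)$. The ruling $\Bl_O(\PP^2)\to\PP^1$ has as its fibre over $t$ the proper transform of the line through $O$ of direction $t$; since $O\notin C$ this is a $\PP^1$ meeting $C$ in $\deg C$ points for generic $t$. Deleting the finitely many singular fibres---this is the role of $L$---turns $p\co E\to B$ into a locally trivial fibre bundle over $B=\PP^1\setminus\{q_1,\dots,q_k\}$ with fibre $\Phi$ a finitely punctured sphere, which is exactly the setting underlying the Zariski--van Kampen method. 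Both $\pi_1(\Phi)\cong F_r$ and $\pi_1(B)\cong F_{k-1}$ are free (punctured spheres), and the homotopy exact sequence of $p$ gives a short exact sequence $1\to F_r\to G\to F_{k-1}\to 1$ which splits because its quotient is free, so $G\cong F_r\rtimes_\varphi F_{k-1}$. Finiteness of the monodromy means precisely that the image of the monodromy map $F_{k-1}\to\Mod(\Phi)\hookrightarrow\Out(F_r)$ is a finite group $Q$; equivalently, writing $\Gamma\le\Aut(F_r)$ for the image of the conjugation action of $G$ on its normal subgroup $F_r$, one has $\Inn(F_r)\le\Gamma$ with $\Gamma/\Inn(F_r)\cong Q$ finite, so $\Gamma$ is a finitely generated virtually free group.

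\emph{The core construction for (1).} I now let $G$ act geometrically on a product of two simplicial trees. First, $F_{k-1}$ is free, so it acts freely and cocompactly on a tree $T_1$ (its Cayley graph for a free basis); composing with $G\onto F_{k-1}$ gives a cocompact action of $G$ on $T_1$ in which every point stabiliser equals the fibre subgroup $F_r$. Second, $\Gamma$ is virtually free, so it acts properly and cocompactly---with finite vertex stabilisers---on a simplicial tree $T_2$ (the Bass--Serre tree of a presentation as a finite graph of finite groups); the subgroup $\Inn(F_r)\cong F_r$, being torsion free of finite index in $\Gamma$, then acts freely and cocompactly on $T_2$. Composing with the conjugation homomorphism $G\to\Gamma$ lets $G$ act on $T_2$. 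Finally let $G$ act diagonally on the CAT(0) space $T_1\times T_2$. This action is cocompact, since $G$ acts cocompactly on $T_1$ with point stabiliser $F_r$ (which acts trivially on $T_1$) and $F_r$ acts cocompactly on $T_2$; and it is free---hence proper---because the stabiliser of a point $(x,y)$ lies in $\operatorname{Stab}_G(x)=F_r$, its image in $\Gamma$ lies in the finite group $\operatorname{Stab}_\Gamma(y)$, and $\ker(G\to\Gamma)\cap F_r=\{1\}$ (the map $G\to\Gamma$ restricts to the isomorphism $F_r\cong\Inn(F_r)$), so this stabiliser is a finite subgroup of the torsion free group $F_r$ and is therefore trivial. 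Thus $G$ acts geometrically on a CAT(0) space, i.e.\ $G$ is CAT(0).

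\emph{Proof of (2): plan and main obstacle.} Let $f\co X\to\PP^2$ be the generic projection, of degree $n$, with branch curve $C$, and fix $O\in\PP^2\setminus C$; form $E=\Bl_O(\PP^2)\setminus(C\cup L)$, $B$ and $\Phi$ as above, so that the monodromy of $E\to B$ is the braid monodromy of $C$ along the pencil of lines through $O$, and observe that via $f$ this pencil pulls back to a Lefschetz pencil of complete intersection curves on $X$ (each $f^{-1}(\ell_t)$ is a hyperplane section of $X$). The plan is: (a) show that for a suitable $O$ the monodromy of $E\to B$---that is, the subgroup of $\Out(F_r)$ generated by the braid monodromy of $C$---is finite; (b) apply (1) to conclude $\pi_1(E)$ is CAT(0); (c) transfer this to $\pi_1(\PP^2\setminus C)$. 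Step (a) is the crux and the place where the complete intersection hypothesis must enter. The point is delicate: the braid monodromy of a branch curve is never finite inside the Artin braid group $B_{\deg C}$, yet its image in the mapping class group $\Mod(\Phi)$ of the \emph{sphere} with $\deg C$ punctures---which is the ``monodromy'' relevant to (1)---is a much smaller group, and one must show that the rich but highly structured braid monodromy factorisations of branch curves of generic projections (Chisini's local models, the braid calculus of Zariski and of Moishezon--Teicher, and the degeneration of a smooth complete intersection to a union of planes) force this image to be finite. I expect this verification to be the main difficulty; a possible alternative is to run the same fibration argument one level up, on a blow-up of $X$ with $f^{-1}(C)$ and the singular fibres removed, exploiting the special monodromy of Lefschetz pencils on complete intersections.

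\emph{Proof of (2): the transfer step and secondary obstacle.} For step (c): since $O\notin C$, blowing up $O$ leaves the fundamental group of the complement unchanged, $\pi_1(\PP^2\setminus C)\cong\pi_1(\Bl_O(\PP^2)\setminus C)$, and filling the deleted fibres back in presents $\pi_1(\Bl_O(\PP^2)\setminus C)$ as the quotient of $\pi_1(E)=F_r\rtimes_\varphi F_{k-1}$ by the normal closure of the free factor $F_{k-1}$ (the meridians of the components of $L$). Since CAT(0)-ness does not survive arbitrary quotients, one must use finiteness of the monodromy once more---now to present this quotient, or a finite index subgroup of it, explicitly in terms of free groups directly from the braid monodromy data (for instance as a finite graph of free groups, or as a finite extension of a product of free groups), so that a product-of-trees argument exactly as in the core construction of (1) applies to it. Controlling this quotient, together with the finiteness in step (a), is where the genuine content of (2) lies.
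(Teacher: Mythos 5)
Your argument for part (1) is correct and is essentially the paper's argument in a mildly different dressing: the paper realizes the finite monodromy group by isometries of a finite graph via Nielsen realization for $\Out(F_n)$ (Culler, Khramtsov, Zimmermann) and takes the quotient of $\widetilde{\Gamma}\times T_m$, whereas you act on the Bass--Serre tree of the virtually free image of $G$ in $\Aut(F_r)$; both produce a proper cocompact action on a product of two trees. Two small points to tighten: ``free hence proper'' is not a valid implication for isometric actions in general, so you should note that your action is cellular on the square complex $T_1\times T_2$ with trivial cell stabilizers (or run the coset argument using $F_r\cap\ker(G\to\Aut(F_r))=\{1\}$); and your identification $F_r\cong\Inn(F_r)$ uses trivial center, so the degenerate cases $r\le 1$ (curves of degree $\le 2$) need a separate, trivial, remark.

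Part (2) is where the proposal has a genuine gap, and the gap is not just the admitted ``main difficulty'': the plan itself cannot work. Your step (a) asks for a center of projection $O$ for which the monodromy of $\Bl_O(\PP^2)\setminus(C\cup L)\to\PP^1$ has finite image in $\Mod(S_{0,\deg C})$. For any $O\notin C$ the pencil through $O$ contains lines simply tangent to $C$ (the dual curve has degree $\ge 2$), and the local braid monodromy at such a fiber is a half-twist interchanging two punctures; a half-twist has infinite order in the mapping class group of a sphere with at least four punctures, and the branch curve of a generic projection of a complete intersection surface of codimension $\ge 2$ has degree well above four (in fact Moishezon--Teicher-type results show the braid monodromy image is very large). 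So the finite-monodromy hypothesis of (1) fails for every choice of $O$, and (2) cannot be reduced to (1); your step (c) (controlling the quotient obtained by filling the singular fibers back in) is likewise left open. The paper proves (2) by a completely different, purely group-theoretic route: Robb's theorem identifies $\pi_1(\C^2\setminus C_a)\cong\tB_d$ and $\pi_1(\PP^2\setminus C)\cong\tB_d/\langle\eta^m\mu^e\rangle$ for an explicit quotient $\tB_d$ of the braid group; using Robb's presentation of the finite-index subgroup $\tP_{0,d}$ (a central extension $1\to\Z/2\to\tP_{0,d}\to\Z^{d-1}\to 1$) one sees that $\tB_d$, hence also the projective quotient, is virtually abelian, and finitely generated virtually abelian groups are CAT(0) by an elementary Bieberbach-style lemma. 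None of these inputs appear in your proposal, so part (2) remains unproved as written.
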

For background on blow-ups and construction of the fibration described in part (1), see Sections \ref{sec:ZVK} and  \ref{A1} in the Appendix.  The first part of this theorem will be known to the experts, and in this case $\Bl_O(\PP^2)\setminus (C\cup L)$ is the total space of a punctured $\PP^1$-bundle over a punctured $\PP^1$ admitting a complete Riemannian metric of nonpositive sectional curvature. The second part of this theorem relies on a result of Robb \cite{robb1997branch} concerning fundamental groups of the complements to branching curves of generic complete intersections. 

In the negative direction, our obstruction for a group to be CAT(0) comes from the previous work of the third author \cite{zhu2023conditions}, generalizing methods of \cite{LIP21}.  We consider examples of quasi-projective varieties whose fundamental groups are extensions of free groups of finite rank. First, we exhibit a free-by-free group with infinite monodromy, which is not CAT(0), derived from the union of a 3-cuspidal quartic curve and a collection of lines (Theorem \ref{InfiniteMono}). This braid monodromy was first studied by Catanese--Wajnryb  \cite{catanese20043}, and in this case, we compute the monodromy explicitly. 

The second set of non-CAT(0) examples arise as monodromy groups associated with simple singularities of curves.  Such singularities are enumerated by root systems $R$ of type $A_n$ ($n\geq 1)$, $D_n$ ($n\geq 4$) or $E_n$ $(6\leq n\leq 8$). If $U_R$ is the corresponding semi-universal deformation space, then $\pi_1(U_R)$ is isomorphic to the Artin group $A_R$. The deformation space $U_R$ is the base space of a fiber bundle $V_R$ whose fibers are smooth affine curves. Consequently, $\pi_1(V_R)$ is an extension of $A_R$ by a free group of finite rank.

\begin{mainthm}\label{thm:mainB}
    Let $R$ be one of the root systems $E_6$, $E_7$, or $E_8$ and let $V_R$ be the total space of the affine family over the semi-universal deformation of the simple singularity of type $R.$ Then $\pi_1(V_R)$ is not CAT(0).
\end{mainthm}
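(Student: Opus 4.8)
The plan is to combine the fibration $V_R\to U_R$ with the non‑CAT(0) criterion established by the third author in \cite{zhu2023conditions} (which refines the methods of \cite{LIP21}), in the same spirit as Theorem~\ref{InfiniteMono}. First I would pin down the group‑theoretic shape of $\pi_1(V_R)$: by Ehresmann's theorem applied over the discriminant complement $U_R$, the projection $V_R\to U_R$ is a locally trivial $C^\infty$ fiber bundle whose fiber $\Sigma$ is a smooth affine curve, namely the Milnor fiber of the simple singularity of type $R$, so $\pi_1(\Sigma)\cong F_r$ is free of rank $r=\mu(R)=\operatorname{rank}(R)\in\{6,7,8\}$. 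Since $U_R$ is a $K(A_R,1)$ (Brieskorn, Deligne) and $\Sigma$ is aspherical, the homotopy sequence of the bundle collapses to a short exact sequence $1\to F_r\to\pi_1(V_R)\xrightarrow{\,p\,}A_R\to 1$, which is determined by the monodromy representation $\rho\colon A_R\to\Out(F_r)$; lifting $\rho$ along a continuous section over the $1$‑skeleton of $U_R$ gives $\tilde\rho\colon A_R\to\Aut(F_r)$, well defined up to $\Inn(F_r)$. Note also that $\pi_1(V_R)$ is quasi‑projective, as $V_R$ is a smooth quasi‑projective variety.

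The core of the argument is to verify the hypotheses of \cite{zhu2023conditions} for this extension. Those hypotheses concern free‑by‑$Q$ groups and, roughly, require the monodromy to be ``large'' in a mixed sense — part of it acting with positive exponential growth rate and part of it preserving enough structure to produce a finitely generated subgroup that occurs in no group acting properly cocompactly on a CAT(0) space; in our situation this will be an exponentially distorted free abelian subgroup, whose existence contradicts the Flat Torus Theorem. To check this I would first make $\rho$ explicit: for $E_6$ ($x^3+y^4$), $E_7$ ($x^3+xy^3$) and $E_8$ ($x^3+y^5$), the $r$ vanishing cycles $c_1,\dots,c_r$ on $\Sigma$ have intersection graph the corresponding Dynkin diagram, $\rho$ is generated by the Dehn twists $T_{c_i}$ (the group they generate in $\operatorname{MCG}(\Sigma)$ being a quotient of $A_{E_n}$, after Perron--Vannier), and a meridian of the smooth locus of the discriminant realizes each $T_{c_i}$. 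One then must exhibit the configuration the criterion requires: a non‑peripheral vanishing cycle $c=c_i$, so that a meridian $m$ with $\rho(m)=T_c$ lifts to $\tilde m\in\pi_1(V_R)$ centralizing $[c]\in F_r$, together with an element $g\in A_R$ whose monodromy $\tilde\rho(g)$ has exponential growth and does not fix the conjugacy class of $[c]$ (for instance $g=T_{c_j}T_{c_k}^{\pm1}$ for a suitable pair of vanishing cycles meeting once). Feeding this into \cite{zhu2023conditions} produces an exponentially distorted copy of $\Z^2$ inside $\pi_1(V_R)$ — generated by $[c]$ and $\tilde m$, with $F_r$ itself exponentially distorted in $\pi_1(V_R)$ — whence $\pi_1(V_R)$ is not CAT(0). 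The assumption $R\in\{E_6,E_7,E_8\}$ enters precisely here, in verifying the combinatorial input of \cite{zhu2023conditions}: the required configuration is present for these root systems.

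The main obstacle is exactly this last step: translating the algebra of the semi‑universal deformation of $E_6,E_7,E_8$ into an explicit, basepoint‑careful description of how $A_R$ acts on a free basis of $\pi_1(\Sigma)$, checking that $\Sigma$ genuinely carries a non‑peripheral vanishing cycle $c$ whose own twist centralizes $[c]$ while some other monodromy element moves $[c]$ exponentially far, and organizing the outcome so that it literally satisfies the hypotheses of \cite{zhu2023conditions} rather than merely resembling them. A secondary, purely bookkeeping difficulty is controlling the $\Inn(F_r)$‑indeterminacy when passing between $\Aut(F_r)$ and $\Out(F_r)$, so that the relation $\tilde m\,[c]\,\tilde m^{-1}=[c]$ holds on the nose and the resulting $\Z^2$ is a bona fide subgroup.
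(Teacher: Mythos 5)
There is a genuine gap: your proposal never identifies the actual ingredient that singles out $E_6,E_7,E_8$, namely Wajnryb's theorem that the geometric monodromy $\phi_R\colon A_R\to\Mod(C_R)$ admits no injective geometric realization, i.e.\ has nontrivial (in fact infinite) kernel, while its image is infinite because Artin generators go to Dehn twists. The obstruction from \cite{zhu2023conditions} used here is not a ``distorted $\Z^2$ / Flat Torus'' criterion built from a vanishing cycle, a meridian twist centralizing it, and an exponentially growing element; it is the dichotomy of Theorem~\ref{thm:LIP-Obstruction}: if the extension $1\to F\to G\to Q\to 1$ has $F$ finitely generated with trivial center and $Q$ with Property LIP, then a CAT(0) $G$ forces the monodromy to have finite image or finite kernel. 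The configuration you propose to verify (adjacent vanishing cycles, twists about them, an element of exponential growth not fixing a given cycle's class) is present verbatim for the $A_n$ and $D_n$ singularities as well, where Perron--Vannier show the monodromy \emph{is} injective and the CAT(0) question is open; so that combinatorial input cannot be where $E_6,E_7,E_8$ enter, and an argument along your lines would ``prove'' too much. The step where you feed your configuration into \cite{zhu2023conditions} to extract a distorted $\Z^2$ is not supported by the statement of that result and is not justified independently.

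A second, unaddressed obstacle is the center of $A_R$: since $Z(A_R)\cong\Z$ is infinite, $A_R$ is not acylindrically hyperbolic and fails Property LIP outright (any infinite finitely generated subgroup of the center has infinite centralizer), so the obstruction theorem cannot be applied to the extension $1\to F\to\pi_1(V_R)\to A_R\to 1$ directly. The paper's proof deals with this by passing to the finite-index subgroup $A'=\alpha^{-1}(n\Z)\cong P'\times Z$ with $P'=\ker\alpha$, noting that $Z$ acts by a boundary multitwist and hence lifts to a central $\Z$ in the corresponding cover's group $G'$, splitting $G'=H'\times\Z$ and invoking Bowers--Ruane to reduce to $H'$; then $P'$ embeds with finite index in $A_R/Z$, which is acylindrically hyperbolic by Calvez--Wiest, hence has LIP, and Wajnryb's explicit kernel element $(ba_1ca_1ba_1)(a_1ba_1ca_1b)^{-1}$ lies in the commutator subgroup $P'$, giving infinite kernel (as $P'$ is torsion-free) alongside infinite image, contradicting the dichotomy. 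Your proposal would need all of this structure --- the finite-index/center reduction and, above all, Wajnryb's non-injectivity --- to go through.
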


A crucial ingredient here is a result of Wajnryb \cite{wajnryb1999artin}, which states that for these three root systems, the geometric monodromy is not injective. In contrast, Perron--Vannier \cite{PerronVannier} showed that when $R=A_n$ or $D_n$, the geometric monodromy is injective. At this time, the authors do not know whether $\pi_1(V_R)$ is CAT(0) in these cases.\\

\noindent\textbf{Acknowledgments:} The third author would like to thank Matthew Durham, Jose Ignacio Cogolludo-Agustin, and Chenxi Wu for useful discussions.

\section{Preliminaries on plane curve complements and CAT(0) groups}

In this section, we review some necessary material on computing fundamental groups of plane curve complements. We refer the reader to the Appendix for a more comprehensive overview of the topology of plane curves. We also recall basic definitions and properties of CAT(0) spaces and groups, introduce the LIP property, and describe the main obstruction we will use to show certain group extensions are not CAT(0).  

\subsection{The Zariski--van Kampen Theorem}\label{sec:ZVK}
\text{}\\
(See Appendices \ref{A1} and \ref{A2} and \cite{libgober2021complements} for additional details). The blow-up $\Bl_O\PP^2$ of $\mathbb P^2$ at a point $O$ admits a fibration over $\mathbb P^1$ with $\PP^1$-fibers. Given a plane curve $C$ and $O\not\in C$, we obtain a (singular) fibration from the complement of the plane curve $\Bl_O(\PP^2)\setminus C\to \PP^1$ with finitely many singular fibers, and whose generic fibers are complements $\PP^1\setminus \{x_1,...,x_d\}$ to a finite set of points of cardinality $d$, where $d=\deg(C)$. Let $L=\cup_{i=1}^nL_i$ be the union of lines corresponding to the singular fibers. After removing $L$, we get a (locally trivial) punctured sphere bundle over a punctured sphere: \begin{equation}\label{eqn:Blowup-Fibration}\PP^1\setminus \{x_1,...,x_d\}\to \Bl_O(\PP^2)\setminus (C\cup L)\rightarrow \PP^1\setminus \{y_1,...,y_n\},\end{equation} where $n$ is the number of singular fibers. 

If $n\geq 1$, the long exact sequence of homotopy groups for (\ref{eqn:Blowup-Fibration}) gives rise to a short exact sequence on $\pi_1$: 
\[1\to \pi_1(\PP^1\setminus \{x_1,...,x_d\})\to \pi_1(\Bl_O(\PP^2)\setminus (C\cup L))\to \pi_1(\PP^1\setminus \{y_1,...,y_n\})\to 1.\]
We have $\pi_1(\PP^1\setminus \{x_1,...,x_d\})\cong F_{d-1}$ and $\pi_1(\PP^1\setminus \{y_1,...,y_n\})\cong F_{n-1}$, where $F_k$ denotes the free group on $k$ generators. Substituting in the sequence above yields
\begin{equation}
    1\to F_{d-1}\to \pi_1(\Bl_O(\PP^2)\setminus (C\cup L))\to F_{n-1}\to 1
\end{equation}
This short exact sequence is actually split since $F_{n-1}$ is free. Thus the fundamental group of the complement of $C\cup L$ is a semi-direct product:
$$\pi_1(\Bl_O(\PP^2)\setminus (C\cup L))\cong F_{d-1} \rtimes_{\rho}  F_{n-1} ,$$ where $\rho$ is given by a lift of the induced $\pi_1$-representation $\rho\colon F_{n-1}\to \Out(F_{d-1})$.

If we rewrite $\pi_1(\PP^1\setminus \{x_1,\ldots,x_d\})$ as $\langle g_1,\ldots,g_d|\prod_{i=1}^d g_i=1\rangle$, and $\pi_1(\PP^1\setminus \{y_1,\ldots,y_n\})$ as $\langle t_1,..,t_n|\prod_{i=1}^n t_i=1\rangle$ then we get a presentation
$$\pi_1(\Bl_O(\PP^2)\setminus (C\cup L))\cong\left\langle g_i,t_j,~ 1\leq i\leq d,~1\leq j\leq n~\bigg \vert~\prod_{i=1}^d g_i=1,\prod_{j=1}^n t_j=1, g_i^{t_j}=\rho(t_j)(g_i)\right\rangle.$$
\begin{remark}
    Here, and throughout, for any group $G$ and elements $g,h\in G$, we will use the notation $h^g:=ghg^{-1}$; and for the inverse of $g$, we will use the notation $\overline g$.
\end{remark}

Filling in the singular fibers is equivalent to killing the generators $t_j$ on the level of $\pi_1$. On the other hand, since $O\notin C$ blowing up at $O$ does not affect the fundamental group of the complement, \emph{i.e.} $\pi_1(\Bl_O(\PP^2)\setminus C)\cong \pi_1(\PP^2\setminus C)$. It follows that $\pi_1(\PP^2\setminus C)$ has presentation: 
$$\pi_1(\PP^2\setminus C)\cong\left\langle g_i,~1\leq i\leq d~\bigg \vert~\prod_{i=1}^d g_i=1, g_i=\rho(t_j)(g_i),~1\leq j\leq n\right\rangle.$$
 Since $\Bl_O(\PP^2)\setminus (C\cup L)$ is the total space of a fiber bundle, we also have a monodromy homomorphism  $\phi:\pi_1(\PP^1\setminus \{y_1,...,y_n\})\to \Mod(\PP^1\setminus \{x_1,...,x_d\})$, where for any finite type surface $S$, $\Mod(S)$ denotes the \emph{mapping class group of $S$}. The representation $\rho$ defined above factors through $\phi$, thus restricting the possible free-by-free groups arising as $\pi_1(\Bl_O(\PP^2)\setminus (C\cup L)).$ In Section \ref{sec:cuspidal-quartic}, we will present an example of the computation of the monodromy homomorphism and fundamental group of the complement along the lines of the above description.

\subsection{CAT(0) groups and the LIP property}
\text{}\\
Let $X$ be a proper geodesic metric space. Given any three points $p,q,r$ in $X$, we can find a geodesic triangle $\Delta=\Delta(p,q,r)$ with $p,q,r$ as its vertices. By the triangle inequality, there exists a unique triangle $\Delta'=\Delta'(p',q',r')\subset \R^2$ with the same side lengths as $\Delta$, and a map $f\colon \Delta\rightarrow \Delta'$ mapping $p, q,r$ to $p',q',r'$, respectively, and restricting to an isometry on each edge. We call $\Delta'$ a \emph{comparison triangle} and $f$ a \emph{comparison map}.

\begin{definition} A proper, geodesic metric space $X$ is called CAT(0) if the following holds. Given a geodesic triangle $\Delta=\Delta(p,q,r)\subset X$ with  comparison triangle $\Delta'=\Delta'(p',q',r')\subset \R^2$ and comparison map $f\colon \Delta\rightarrow \Delta'$, for any $t_1\in [p,q]$ and $t_2\in [p,r]$ we have \[d_X(t_1,t_2)\leq d_{\R^2}(f(t_1),f(t_2)).\]

\end{definition}

The notion of a CAT(0) space generalizes the concept of nonpositive curvature to more general metric spaces.  Many results, such as the Cartan--Hadamard theorem, extend naturally to CAT(0) spaces.  In particular, CAT(0) spaces are geodesically convex and therefore contractible.  As in the case of curvature, being CAT(0) is a local notion and comes with an analogous local-to-global principle. Thus, if a space is simply connected and locally CAT(0), it is globally CAT(0). It is not difficult to show that CAT(0) spaces, the product of two CAT(0) spaces with the $\ell^2$-metric is again CAT(0) \cite{Bridson-Haefliger99}.

\begin{definition}
    A group $G$ is CAT(0) if it acts properly and cocompactly on a CAT(0) space by isometries.
\end{definition}

Examples of CAT(0) spaces include simplicial trees and nonpositively curved Riemannian manifolds. Accordingly, examples of CAT(0) groups include free groups, free abelian groups, and hyperbolic surface groups. Conjecturally, all Artin groups are CAT(0) as well, but this has only been verified in specific cases and is open even for finite type Artin groups such as braid groups $B_n$, $n> 7.$ 


Llosa-Isenrich--Py \cite{LIP21} studied when a surface-by-surface group is CAT(0) if both base and fiber genus are at least 2. They derived a necessary condition on the algebraic monodromy: it is either injective or has finite image. In \cite{zhu2023conditions}, the third author extended their results by introducing the following definition and general obstruction for an extension to be CAT(0).

\begin{definition}[Property LIP]\label{LIP}
	A group $Q$ has \emph{Property LIP} if, for every infinite normal subgroup $N \trianglelefteq Q$, there is an infinite finitely generated subgroup $N_0<N$ so that the centralizer $C_Q(N_0)$ is finite.
\end{definition}


\begin{theorem}[Theorem 1.5, \cite{zhu2023conditions}]\label{thm:LIP-Obstruction}
    Suppose $G$ fits into a short exact sequence $$1\to R \to G \to\Gamma\to 1$$ where $R$ is finitely generated and has trivial center, and $Q$ has Property LIP. Then if $G$ is $\mathrm{CAT}(0)$, the algebraic monodromy  $\rho\colon Q\to\mathrm{Out}(R)$ either has finite image or finite kernel.
\end{theorem}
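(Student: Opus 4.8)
The plan is to prove the contrapositive: assume $G$ is CAT(0) but $\rho\colon\Gamma\to\Out(R)$ has both infinite image and infinite kernel, and derive a contradiction. Put $K=\ker\rho\trianglelefteq\Gamma$. Since $\Out(R)$ is finite whenever $R$ is finite, and $\im\rho$ is finite whenever $\Gamma$ is finite, we may assume that $R$ and $K$ are both infinite. The first step is purely algebraic. Let $\tilde K\le G$ be the preimage of $K$. Because $\rho$ vanishes on $K$, each $g\in\tilde K$ acts on $R$ by an inner automorphism $c_r$ ($r\in R$), so $gr^{-1}\in C_G(R)$ and hence $\tilde K=R\cdot C_G(R)$. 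Conversely $C_G(R)$ acts trivially on $R$, so maps into $\ker\rho=K$, giving $C_G(R)\le\tilde K$. As $Z(R)=1$ we get $R\cap C_G(R)=1$, so $\tilde K=R\times C_G(R)$ and $C_G(R)\cong\tilde K/R\cong K$. Identifying $K$ with $C_G(R)$, we have found inside $G$ an infinite normal subgroup $K$ commuting with $R$, so that $R\times K\trianglelefteq G$, with $G/(R\times K)\cong\im\rho$ infinite.

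The second step feeds in Property LIP (Definition \ref{LIP}) of $\Gamma$, applied to the infinite normal subgroup $K$: there is a finitely generated infinite subgroup $K_0\le K$ with $C_\Gamma(K_0)$ finite. Through the identification $K\cong C_G(R)$, regard $K_0$ as a finitely generated infinite subgroup of $G$ contained in $C_G(R)$. Then $R\le C_G(K_0)$, because $R$ commutes with all of $C_G(R)\supseteq K_0$; on the other hand, the image of $C_G(K_0)$ in $\Gamma$ commutes with the image of $K_0$, hence lies in the finite group $C_\Gamma(K_0)$. Since $R$ is precisely the intersection of $\ker(G\to\Gamma)$ with $C_G(K_0)$ and $R\le C_G(K_0)$, this forces $[\,C_G(K_0):R\,]<\infty$. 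Hence $C:=C_G(K_0)$ is finitely generated (a finite extension of the finitely generated $R$), is infinite, and contains the $G$-normal finitely generated subgroup $R$ with finite index.

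The third step, which I expect to be the main obstacle, is to turn this configuration --- two commuting finitely generated infinite subgroups $K_0$ and $C$ of the CAT(0) group $G$, the second of which is virtually the normal subgroup $R$, together with $G/(R\times K)$ infinite --- into a contradiction. Fixing a geometric action of $G$ on a CAT(0) space $X$, the basic tool is the Flat Torus Theorem: since $K_0$ is infinite it contains an infinite-order element $z$, which acts as a hyperbolic isometry (an infinite-order element of a group acting geometrically is semisimple and cannot be elliptic, as an elliptic element would generate a finite group), so $\mathrm{Min}(z)$ splits isometrically as $Y\times\R$ with $z$ translating the second factor and the centralizer $C_G(z)\supseteq C\supseteq R$ preserving this splitting. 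Pushing this through the product-decomposition theory for CAT(0) groups --- the Flat Torus Theorem and its refinements in the style of Bridson--Haefliger and Caprace--Monod, as exploited by Llosa-Isenrich--Py in \cite{LIP21} --- and using that $R$ is normal, one aims to produce a $G$-invariant isometric product decomposition of a minimal $G$-invariant subspace of $X$, with one factor accounting, up to finite index, for the cocompact action of $R$ (on which $K_0\le C_G(R)$ therefore acts through the trivial group $Z(R)$, i.e.\ trivially) and the other for $\Gamma$. The finiteness of $C_\Gamma(K_0)$ must then be played off against the infinitude of $\im\rho$ on the complementary factor to reach a contradiction. Making this rigorous --- in particular controlling the splitting and the boundedness of the complementary factor when the relevant subgroups act only properly, not cocompactly --- is the technical heart of the argument, and is exactly where Property LIP genuinely does its work; it generalizes, beyond the hyperbolic setting, the surface-by-surface analysis of \cite{LIP21}.
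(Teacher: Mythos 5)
Your Steps 1 and 2 are correct, and they reproduce the standard algebraic reduction with which the cited proof of this theorem (which this paper imports from \cite{zhu2023conditions} rather than reproving) also begins: writing the preimage of $K=\ker\rho$ as $R\times C_G(R)$ using $Z(R)=1$, invoking Property LIP of the quotient to get a finitely generated infinite $K_0\le K$ with $C_\Gamma(K_0)$ finite, and concluding that $C_G(K_0)$ contains $R$ with finite index. Up to that point the argument is sound (and your silent identification of the paper's $Q$ with $\Gamma$ is the right reading of the statement).

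The genuine gap is Step 3, which is where the entire content of the theorem lives and which you do not actually carry out: you name the Flat Torus Theorem and product-decomposition results and then state what you ``aim to produce'' and what ``must then be played off'' to reach a contradiction. As written, the route you sketch does not go through. The splitting $\mathrm{Min}(z)\cong Y\times\R$ is invariant only under $C_G(z)$, not under $G$ (the element $z\in K_0$ is not central in $G$), so it cannot directly yield a $G$-invariant isometric product decomposition; and neither $R$ nor $C_G(R)$ acts cocompactly on $X$, so there is no ``factor accounting for the cocompact action of $R$,'' nor can one invoke minimal invariant subspaces or the splitting theorems for reducible/cocompact actions without substantial extra work. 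Handling exactly this failure of cocompactness --- via the cocompactness of centralizers on minimal sets for the ambient geometric action of $G$, and then playing the finiteness of $C_\Gamma(K_0)$ against the infinitude of $\mathrm{im}\,\rho$ --- is the substance of \cite[Lemma 30]{LIP21} and of its generalization in \cite{zhu2023conditions}; your proposal defers precisely this step, so it does not constitute a proof of Theorem \ref{thm:LIP-Obstruction}. A secondary point to watch if you try to complete the argument: your use of an infinite-order element $z\in K_0$ requires knowing $K_0$ has one, which the bare LIP hypothesis (an infinite finitely generated $K_0$) does not obviously guarantee, so either this must be argued or the geometric step must avoid it.
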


\begin{remark}
    The initial version of this theorem assumed that $R$ is a surface group, but later, the author realized that the proof in \cite{zhu2023conditions} only requires $R$ to be finitely generated (see \cite{zhu2023conditions}-p.4649, the eighth line from the bottom), have a trivial center (see \cite[Remark 3.2]{zhu2023conditions} and the proof of \cite[Lemma 30]{LIP21} to make the theorem work.
\end{remark}

Although the definition of Property LIP is easy to state, a simple criterion which guarantees it holds is provided by the following.

\begin{proposition}[Theorem 1.8, \cite{zhu2023conditions}]\label{prop:AC-is-LIP}
    If $Q$ is acylindrically hyperbolic, then $Q$ has Property LIP.
\end{proposition}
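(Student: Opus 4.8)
The plan is to work directly with the acylindrical action of $Q$ on a hyperbolic geodesic space $S$ that witnesses acylindrical hyperbolicity, and to produce, for each infinite normal subgroup $N\trianglelefteq Q$, a two‑generated infinite subgroup $N_0<N$ with finite centralizer. Recall the standard structure theory for such actions (Bowditch, Osin, Dahmani--Guirardel--Osin): a loxodromic element $g$ has a canonical fixed pair $\{g^+,g^-\}\subseteq\partial S$; it lies in a unique maximal virtually cyclic subgroup $E_Q(g)=\{q\in Q:q\{g^+,g^-\}=\{g^+,g^-\}\}$; and $C_Q(g)\leq E_Q(g)$, since any $q$ commuting with $g$ carries $\mathrm{Fix}_\infty(g)=\{g^+,g^-\}$ to $\mathrm{Fix}_\infty(qgq^{-1})=\{g^+,g^-\}$. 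Moreover, if $g,h$ are \emph{independent} loxodromics, i.e. $\{g^+,g^-\}\cap\{h^+,h^-\}=\emptyset$, then $E_Q(g)\cap E_Q(h)$ is finite: it is virtually cyclic, and any infinite‑order element of it would be loxodromic with fixed pair simultaneously equal to $\{g^\pm\}$ and $\{h^\pm\}$, contradicting independence.

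Granting that the restriction of the $Q$–action to every infinite normal subgroup $N$ is non‑elementary, the argument is then immediate. A non‑elementary action on a hyperbolic space contains two independent loxodromic elements, so pick such $g,h\in N$ and set $N_0:=\langle g,h\rangle<N$. Then $N_0$ is finitely generated (by two elements) and infinite (it contains the infinite–order element $g$). Finally $C_Q(N_0)=C_Q(g)\cap C_Q(h)\leq E_Q(g)\cap E_Q(h)$, which is finite by the previous paragraph. As $N$ was an arbitrary infinite normal subgroup, $Q$ has Property LIP, and by Theorem \ref{thm:LIP-Obstruction} this is exactly the input needed for the CAT(0) obstruction.

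The step I expect to require the most care is precisely the claim that an infinite normal subgroup $N\trianglelefteq Q$ acts non‑elementarily on $S$, where both acylindricity and normality are essential. Here is the intended route (which essentially reproves a theorem of Osin on normal subgroups of acylindrically hyperbolic groups). First, $N$ contains a loxodromic element: otherwise every element of $N$ is elliptic, and by the classification of isometric actions on hyperbolic spaces $N$ either has bounded orbits or fixes a point $\xi\in\partial S$; in the first case acylindricity of the ambient action forces $N$ to be finite, a contradiction, and in the second case normality gives that $N$ pointwise fixes the orbit $Q\xi\subseteq\partial S$, so if $|Q\xi|\geq 3$ acylindricity again forces $N$ finite, while if $|Q\xi|\leq 2$ the set $Q\xi$ is $Q$–invariant and the $Q$–action is elementary, contradicting acylindrical hyperbolicity. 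Second, once $N$ contains a loxodromic $g$, its action is lineal or non‑elementary; if it were lineal then (the restriction of an acylindrical action being acylindrical) $N$ would be virtually cyclic, its canonical end pair would coincide with $\{g^+,g^-\}$, and normality would force $Q$ to preserve $\{g^+,g^-\}$, again making the $Q$–action elementary and contradicting our hypothesis. Hence $N$ acts non‑elementarily, completing the proof; alternatively one may simply cite Osin's result that infinite normal subgroups of acylindrically hyperbolic groups act non‑elementarily on the associated hyperbolic space.
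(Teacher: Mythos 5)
The paper does not actually prove this proposition: it is quoted verbatim as Theorem 1.8 of \cite{zhu2023conditions}, so the ``paper's proof'' is a citation, and your argument should be compared with that source rather than with anything in the text. What you write is the standard (and, as far as I can tell, essentially the intended) argument: given an infinite normal $N\trianglelefteq Q$, use the fact that $N$ acts non-elementarily on the hyperbolic space witnessing acylindrical hyperbolicity, pick two independent loxodromics $g,h\in N$, set $N_0=\langle g,h\rangle$, and observe $C_Q(N_0)=C_Q(g)\cap C_Q(h)\leq E_Q(g)\cap E_Q(h)$, which is finite because a common infinite-order element would force $\{g^{\pm}\}=\{h^{\pm}\}$ and a torsion subgroup of a virtually cyclic group is finite. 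All of these steps are correct for acylindrical actions, and the conclusion does verify Property LIP as defined in the paper.

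One step is stated too glibly: in your case analysis you claim that if every element of $N$ is elliptic and $N$ has bounded orbits, then ``acylindricity of the ambient action forces $N$ to be finite.'' That is false for arbitrary subgroups --- curve stabilizers in a mapping class group acting on the curve complex are infinite and elliptic --- so acylindricity alone does not suffice; you must use normality here as well. The fix is exactly the trick you use elsewhere: if $N x_0$ has diameter $D$ and $g\in Q$ is loxodromic, then normality gives $N g^n x_0 = g^n N x_0$, so every element of $N$ displaces both $x_0$ and $g^n x_0$ by at most $D$, and acylindricity (with $\epsilon=D$ and $d(x_0,g^nx_0)$ large) bounds $|N|$. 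Similarly, your treatment of the case of a fixed boundary point is sketchier than the lineal case; the cleanest route, which you yourself offer, is simply to quote Osin's result (see \cite{Osin16}) that an infinite normal (indeed s-normal) subgroup of a group acting acylindrically and non-elementarily on a hyperbolic space again acts non-elementarily on that space. With that citation, or with the normality-corrected elliptic case, your proof is complete and matches the expected argument behind the cited theorem.
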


We recall that a group is called acylindrically hyperbolic if it admits a nonelementary acylindrical action on a hyperbolic geodesic metric space. Since the precise definition of acylindrical action will not figure prominently in the sequel, we will not review it here, and instead refer the reader to \cite{Osin16}.

\section{Positive results}\label{finitemonodromy}
In this section we explore several infinite families of curves $C$ for which $\pi_1(\PP^2\setminus (C\cup L))$ or $\pi_1(\PP^2\setminus C)$ is CAT(0). The first class consists of those with finite monodromy. For these, we show that $\pi_1(\PP^2\setminus (C\cup L))$ is CAT(0) and moreover that the total space $\Bl_O(\PP^2\setminus (C\cup L)$ in fact admits a complete, finite-volume nonpositively curved Riemannian metric. The second class of examples is derived from Robb's computation of the fundamental group of a branching curve for a generic projection of a complete intersection surface. We will deduce Theorem \ref{thm:mainA} as a combination of Proposition \ref{prop:Finite-Monodromy} and Theorem \ref{thm:Complete-Intersection-Abelian}.

\subsection{Finite monodromy}
\text{}\\
As a consequence of Theorem \ref{thm:LIP-Obstruction}, the existence of a CAT(0) extension group results in a dichotomy for the monodromy, namely that it either be finite or have finite kernel.  In this section, we investigate the fundamental groups of plane curve complements with finite monodromy. 

\begin{example}
For $n\geq 2$, consider the family of plane curves given by the equation  \[x^{n-1}z-y^n=0.\] This curve has a single multiple cusp of order $n$ at $[0:0:1]$. The projection onto $x$ has one singular fiber at $x=0$, hence the base of fibration is a twice-punctured sphere 
while the fiber is an $n$-punctured sphere. This yields the extension \[1 \rightarrow F_{n-1} \rightarrow G \rightarrow \mathbb Z \rightarrow 1\]
with monodromy isomorphic $\Z/n\Z$, and acts by cyclically permuting the punctures of the fiber.
\end{example}

For plane curves with finite monodromy we prove:
\begin{proposition}\label{prop:Finite-Monodromy}
    Let $C\subseteq \PP^2$ be a plane curve and suppose $\pi_1(\Bl_O(\PP^2)\setminus C)$ fits into a short exact sequence \begin{equation*}
        1\to F_n\to\pi_1(\Bl_O(\PP^2)\setminus C)\to F_m\to 1
    \end{equation*}
    with finite monodromy. Then $\pi_1(\Bl_O(\PP^2)\setminus C)$ is CAT(0). Moreover, $\Bl_O(\PP^2\setminus C)$ admits a finite-volume Riemannian metric of nonpositive curvature.
\end{proposition}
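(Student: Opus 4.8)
The plan is to exploit the bundle structure of the space together with the fact that a finite group of mapping classes is geometrically very tame. By \S\ref{sec:ZVK} (in particular the fibration \eqref{eqn:Blowup-Fibration}), the hypothesis amounts to saying that $G:=\pi_1\bigl(\Bl_O(\PP^2)\setminus(C\cup L)\bigr)\cong F_n\rtimes_\rho F_m$ is the fundamental group of a locally trivial bundle $F\to M\to B$, where $F$ is a sphere with $n+1$ punctures, $B$ a sphere with $m+1$ punctures, and $\rho$ is induced by the topological monodromy $\phi\colon\pi_1(B)\to\Mod(F)$. We may assume $n,m\ge 2$, so that $F$ and $B$ are hyperbolic surfaces and the free groups $\pi_1(F)\cong F_n$, $\pi_1(B)\cong F_m$ are centerless; the low-complexity cases are elementary (there $G$ is free, or a free or free-abelian group extended by $\Z$) and I would dispatch them separately. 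Since $\Mod(F)\hookrightarrow\Out(\pi_1(F))$ for a punctured sphere of negative Euler characteristic, ``finite monodromy'' means exactly that $\Phi:=\phi(\pi_1(B))=\rho(\pi_1(B))$ is a finite group.

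For the first assertion I would argue group-theoretically. Since $\pi_1(F)$ is centerless, extensions of any group by $\pi_1(F)$ are classified by the induced outer action; let $\Gamma$ be the (unique) extension $1\to\pi_1(F)\to\Gamma\to\Phi\to1$ realizing the inclusion $\Phi\hookrightarrow\Out(\pi_1(F))$ --- concretely, $\Gamma$ is the preimage of $\Phi$ under $\Aut(\pi_1(F))\to\Out(\pi_1(F))$. Functoriality of this construction, applied to the surjection $\pi_1(B)\twoheadrightarrow\Phi$, produces a homomorphism $G\to\Gamma$ restricting to the identity on $\pi_1(F)$; pairing it with the projection $G\to\pi_1(B)$ gives an injection $\Psi\colon G\hookrightarrow\Gamma\times\pi_1(B)$, and a short coset computation (using that $\ker(\pi_1(B)\to\Phi)$ has index $|\Phi|$ in $\pi_1(B)$) shows that $\Psi(G)$ has finite index $|\Phi|$ in $\Gamma\times\pi_1(B)$. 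Now $\Gamma$ contains $\pi_1(F)\cong F_n$ with finite index, so $\Gamma$ is finitely generated and virtually free, hence acts properly and cocompactly on a locally finite simplicial tree $T$ (being the fundamental group of a finite graph of finite groups); likewise $\pi_1(B)\cong F_m$ acts properly and cocompactly on its Cayley tree $T'$. Then $\Gamma\times\pi_1(B)$, and therefore its finite-index subgroup $G$, acts properly and cocompactly by isometries on the proper CAT(0) space $T\times T'$ (with the $\ell^2$-metric). Hence $G$ is CAT(0).

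For the ``moreover'', I would fix a complete, finite-area hyperbolic metric $\mathfrak g$ on $B$, and --- invoking Nielsen realization (Kerckhoff; classical for punctured surfaces) --- a complete, finite-area hyperbolic metric $\mathfrak h$ on $F$ for which $\Phi$ is realized by a finite subgroup $\bar\Phi\le\Isom(F,\mathfrak h)$. Then the composite $\pi_1(B)\twoheadrightarrow\Phi\cong\bar\Phi\hookrightarrow\Diff(F)$ realizes the monodromy through $\mathfrak h$-isometries, so $M$ is isomorphic to the flat bundle $\widetilde B\times_{\pi_1(B)}F$; pulling $\mathfrak g$ back to a $\pi_1(B)$-invariant metric $\widetilde{\mathfrak g}$ on $\widetilde B$, the product metric on $(\widetilde B,\widetilde{\mathfrak g})\times(F,\mathfrak h)$ is $\pi_1(B)$-invariant and descends to a complete Riemannian metric $\mathfrak G$ on $M$. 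Locally $\mathfrak G$ is a Riemannian product of two metrics of constant curvature $-1$, so it has nonpositive sectional curvature, and a fundamental domain $D\subset\widetilde B$ for $\pi_1(B)$ gives the fundamental domain $D\times F$ for $M$, whence $\mathrm{vol}(M,\mathfrak G)=\mathrm{area}(B,\mathfrak g)\cdot\mathrm{area}(F,\mathfrak h)<\infty$.

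The step I expect to be the real obstacle is reconciling the two halves of the statement. Because $M$ is noncompact, the nonpositively curved metric by itself yields only a proper --- not cocompact --- action of $G$ on the CAT(0) universal cover $\widetilde M\cong\Hy^2\times\Hy^2$, which does not certify that $G$ is CAT(0) in the sense required (a proper \emph{cocompact} isometric action). The cocompact action has to be manufactured separately, and what makes this possible is precisely that punctured-surface groups are free and their finite extensions virtually free, so that each hyperbolic plane may be traded for a locally finite tree. Secondary points to handle with care are: that the surface bundle is determined by, and can be rigidified to realize, its monodromy (standard once the fiber has negative Euler characteristic); the interchangeability of geometric and algebraic monodromy used above; and the degenerate low-complexity cases, for which the finite-volume clause must be interpreted with care (it genuinely fails, for instance, when the base is an annulus).
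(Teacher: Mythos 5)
Your proof is correct, and its second half (the finite-volume nonpositively curved metric) is essentially the paper's argument: Nielsen realization for the punctured fiber surface, the flat bundle $\widetilde B\times_{\pi_1(B)}F$ with the local product metric, finite volume via a fundamental domain (the paper passes to the finite cover $V\times W'$ instead, which is the same computation), and the identification of the bundle with the curve complement via the fact that such bundles over an open surface are determined by their monodromy (the paper's clutching lemma). Where you genuinely diverge is the CAT(0) half. The paper's Lemma \ref{lem:Finite-Free-By-Free} invokes the realization theorem for finite subgroups of $\Out(F_n)$ (Culler, Khramtsov, Zimmermann) to realize the monodromy by isometries of a metric graph $\Gamma$ of rank $n$, and then takes the quotient of $\Gamma\times T_m$ by the diagonal $F_m$-action, producing a compact locally CAT(0) $2$-complex with fundamental group $G$. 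You instead embed $G$ with index $|\Phi|$ into $\Gamma\times F_m$, where $\Gamma\leq\Aut(F_n)$ is the preimage of the finite monodromy image, and appeal to the fact that a finitely generated virtually free group is the fundamental group of a finite graph of finite groups (Karrass--Pietrowski--Solitar), hence acts properly and cocompactly on a locally finite tree; the product of trees then serves both groups. The two routes are close cousins (the graph-realization theorem is essentially the statement that such a $\Gamma$ acts on a graph), but yours trades one classical theorem for another and is purely algebraic, while the paper's yields an explicit compact nonpositively curved classifying $2$-complex. Two further points in your favor: you correctly observe that the finite-volume metric alone cannot certify CAT(0)-ness for lack of cocompactness, which is exactly why the paper separates the two lemmas; and you flag the low-complexity cases $n,m\leq 1$ and the reading of $\Bl_O(\PP^2)\setminus C$ as $\Bl_O(\PP^2)\setminus(C\cup L)$ --- both points the paper glosses over (its proof implicitly needs $S_{0,n+1}$ and $S_{0,m+1}$ to carry finite-area hyperbolic metrics), so your added care is warranted rather than a defect.
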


This will be an easy consequence of the next two lemmas.
\begin{lemma}\label{lem:Finite-Free-By-Free}Suppose $G$ fits into a short exact sequence\[1\to F_n\to G\to F_m\to 1\]with finite monodromy $H\leq \Out(F_n)$. Then $G$ is the fundamental group of a compact, NPC 2-complex $X$ whose universal cover is a product of trees.
\end{lemma}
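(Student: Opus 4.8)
The plan is to build an explicit compact nonpositively curved (NPC) square complex $X$ with $\pi_1(X)\cong G$ whose universal cover splits as a product of two trees. The starting point is the observation that a finite-index overgroup of $G$ is honestly a direct product $F_n\times F_m$, and direct products of free groups are the prototypical examples of groups acting on products of trees. More precisely, since the monodromy $\rho\colon F_m\to\Out(F_n)$ has finite image $H$, one can pass to a finite-index subgroup: let $K<F_m$ be the kernel of the composite $F_m\to\Out(F_n)\to$ (some finite quotient controlling the action, e.g. the image in $\Aut$ of a characteristic finite-index subgroup), so that $[F_m:K]<\infty$. The preimage $\widehat G$ of $K$ in $G$ is finite index in $G$, and on $\widehat G$ the extension has trivial monodromy into $\Out$; after possibly shrinking $K$ further so the action lifts trivially to $\Aut(F_n)$ (using that $F_n$ has trivial center, so $\Aut\to\Out$ is injective on the relevant finite subgroup's preimage — here one must be a little careful, but replacing $F_n$ by a characteristic finite-index free subgroup $F_{n'}$ handles lifting obstructions), we get $\widehat G\cong F_{n'}\times K\cong F_{n'}\times F_{m'}$ for appropriate ranks. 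This group is the fundamental group of a product of two wedges of circles, i.e. a product of two finite graphs, which is a compact NPC square complex whose universal cover is a product of two trees.

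The second step is to promote this from a finite-index subgroup back to $G$ itself. The clean way is to realize the whole of $G$, not just $\widehat G$, as acting properly and cocompactly on the \emph{same} product of trees $T_1\times T_2$. For this, observe that $F_n$ acts on its Bass--Serre tree $T_1$ (the universal cover of a rose), $F_m$ acts on its tree $T_2$, and the finite monodromy $H$ acts on $T_1$ by automorphisms compatibly — indeed, because $H$ is finite, one can choose an $H$-equivariant free cocompact action of $F_n$ on a tree $T_1$ (realize $H$ as a group of symmetries of a suitable finite graph with fundamental group $F_n$, e.g. by taking the graph to be a regular cover; a finite group acting on a finite graph, then lift the action to the universal cover $T_1$ where it acts by isometries normalizing the $F_n$-action). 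Then $G=F_n\rtimes_\rho F_m$ acts on $T_1\times T_2$: the normal subgroup $F_n$ acts on the first factor and trivially on the second, a section of $F_m$ acts on $T_2$ in the standard way and on $T_1$ through $\rho$ via the chosen $H$-action. One checks this is an action by isometries (each factor action is isometric and the $\ell^2$-product metric is preserved), it is proper (the $F_n$-action on $T_1$ is proper and $F_m$ acts properly on $T_2$, and point stabilizers are finite by a short diagram chase), and it is cocompact (both quotient graphs are finite, and $H$ is finite so the diagonal twisting does not destroy cocompactness). The quotient $X=(T_1\times T_2)/G$ is then a compact NPC square complex — NPC because $T_1\times T_2$ is CAT(0), being a product of trees, and the action is free after passing to a torsion-free finite-index subgroup, while in general $X$ is a nonpositively curved orbihedron; but since $G$ here is torsion-free (it is an extension of torsion-free groups, being free-by-free), the action is in fact free, so $X$ is a genuine compact NPC square complex with universal cover $T_1\times T_2$.

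The main obstacle I expect is the careful construction of the $H$-equivariant tree $T_1$ together with the verification that the twisted $F_m$-action on $T_1$ via $\rho$ is by simplicial isometries and combines with the $F_n$-action into a proper cocompact action of the semidirect product. The subtlety is that an arbitrary finite subgroup $H<\Out(F_n)$ need not lift to $\Aut(F_n)$, and even when it does the chosen lift must be realized geometrically on a single fixed tree; the standard fix is to invoke that any finite subgroup of $\Out(F_n)$ is realized by a finite group of isometries of some simplicial tree on which $F_n$ acts freely cocompactly (this is essentially Culler's realization theorem / the finite-subgroup case of the Nielsen realization problem for free groups, or can be seen directly: $H$ acts on Outer space, which is contractible and finite-dimensional, so has a fixed point, i.e. a marked metric graph with $H$-symmetry, whose universal cover is the desired $T_1$). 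Once that realization is in hand, equivariance and cocompactness are routine, and the finite-volume NPC Riemannian refinement claimed in Proposition \ref{prop:Finite-Monodromy} follows by smoothing the square complex or by directly taking a product of finite-volume hyperbolic (or flat-cusped) surfaces, which I would address separately in the proof of that proposition rather than here.
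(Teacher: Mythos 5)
Your proposal is correct and follows essentially the same route as the paper: both hinge on the Culler--Khramtsov--Zimmermann realization of the finite subgroup $H\leq \Out(F_n)$ by isometries of a finite graph $\Gamma$, then take a product with the universal cover of a rose and pass to a quotient to obtain a compact NPC square complex whose universal cover is a product of trees. The paper streamlines your second step by letting $F_m$ act directly on $\Gamma\times T_m$ (through $H$ on the compact graph $\Gamma$ and by deck transformations on $T_m$) and taking the quotient, which sidesteps the lifting-to-$\Aut(F_n)$ subtlety you flag, since triviality of $Z(F_n)$ guarantees the resulting extension is the given $G$.
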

\begin{proof}By the (Nielsen) realization theorem for $\text{Out}(F_n)$ due independently to Culler \cite{culler1984finite}, Khramtsov \cite{khramtsov1985finite} and Zimmermann \cite{zimmermann1981homoomorphismen}, there exists a rank $n$ graph $\Gamma$ realizing $H$ by isometries (here the metric assigns a fixed positive length $\lambda$ to each edge of $\Gamma$). Let $R_m$ be a wedge of $m$ circles, and let $T_m$ be the universal cover of $R_m.$ Consider the orthogonal product $X'=\Gamma\times T_m$, which is locally  CAT(0) since each of the factors are. Then $F_m$ acts on $X'$ by isometries via $H$ on the $\Gamma$-factor and by deck transformations on $T_m$.  The action is free, proper and cocompact, since it is on the second factor and $\Gamma$ is compact. Thus, the quotient $X$ is locally CAT(0) and has a fundamental group of $G$ by construction. The universal cover of $X$ is $\widetilde{\Gamma}\times T_m$, which is a product of trees. 
\end{proof}

\begin{lemma}\label{lem:Classifying-Sphere-Bundle}
    Let $X$ be the total space of locally trivial $S_{0,k}$-bundle over $S_{0,l}$ with $l\geq 1$.  Then $X$ is determined up to equivalence by the monodromy homomorphism $\rho\colon \pi_1(S_{0,l})\rightarrow \Mod(S_{0,k})$.
\end{lemma}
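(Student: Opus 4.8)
The plan is to reduce the classification of $S_{0,k}$-bundles over the base $B:=S_{0,l}$ to a clutching-function count over a graph, using that the hypothesis $l\geq 1$ forces $B$ to be an open surface and hence homotopy equivalent to a $1$-complex. Concretely, $B$ deformation retracts onto a finite graph $\Gamma\subset B$ — a point if $l=1$, a core circle if $l=2$, and an embedded wedge of $l-1$ circles in general — with $\pi_1(\Gamma)\xrightarrow{\sim}\pi_1(B)$. Writing $r\colon B\to\Gamma$ for the retraction and $\iota\colon\Gamma\hookrightarrow B$ for the inclusion, we have $\iota\circ r\simeq\mathrm{id}_B$, so any locally trivial $S_{0,k}$-bundle $E\to B$ satisfies $E\cong r^*(\iota^*E)$, and this isomorphism carries the monodromy homomorphism of $E$ to that of $\iota^*E$ (pullback along homotopic maps changes the monodromy at most by the conjugacy ambiguity inherent in choosing a trivialization over the basepoint). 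Thus it suffices to prove the lemma with $\Gamma$ in place of $B$.

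First I would trivialize over a maximal subtree $T\subset\Gamma$: since $T$ is contractible, $E|_T\cong S_{0,k}\times T$, and I fix such a trivialization. For each of the remaining $l-1$ edges $e_j$ the bundle $E|_{T\cup e_j}$ is again trivial, and after adjusting the trivialization over $e_j$ to match the chosen one at one endpoint, $E|_{T\cup e_j}$ is determined by a single clutching homeomorphism $\phi_j\in\mathrm{Homeo}^+(S_{0,k})$ at the other endpoint. Interpolating an isotopy along a collar of that endpoint shows that isotopic choices of $\phi_j$ yield isomorphic bundles, so only the class $[\phi_j]\in\pi_0\,\mathrm{Homeo}^+(S_{0,k})=\Mod(S_{0,k})$ matters; this class is exactly the image under the monodromy $\rho$ of the loop formed by $e_j$ together with the path in $T$ joining its endpoints. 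Changing the trivialization over $T$ conjugates the tuple $([\phi_1],\dots,[\phi_{l-1}])$ simultaneously. Hence $E\mapsto\rho$ descends to a bijection between isomorphism classes of $S_{0,k}$-bundles over $\Gamma$ and conjugacy classes of homomorphisms $\pi_1(\Gamma)\to\Mod(S_{0,k})$, which is the desired statement. One may package the same argument abstractly via $[\,\Gamma,\,B\mathrm{Homeo}^+(S_{0,k})\,]\cong\mathrm{Hom}(\pi_1\Gamma,\pi_0\mathrm{Homeo}^+(S_{0,k}))/\text{conj}$, which is valid because $\Gamma$ is $1$-dimensional.

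The hypothesis $l\geq 1$ is essential and indicates where the care lies: it is precisely what lets us work over a $1$-dimensional base and thereby ignore the identity component $\mathrm{Homeo}^+_0(S_{0,k})$, which need not be contractible (e.g.\ for $k=2$); over $S^2=S_{0,0}$ the statement is false, as there are nontrivial $S^2$-bundles over $S^2$ with trivial monodromy. I expect the only genuinely fiddly point to be the bookkeeping in the clutching step — verifying that isotopic clutching data give isomorphic bundles and that the resulting tuple of classes is well defined up to simultaneous conjugacy — which is standard but should be carried out with attention to basepoints and collars.
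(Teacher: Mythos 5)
Your proposal is correct and follows essentially the same route as the paper, which simply notes that $S_{0,l}$ is homotopy equivalent to a wedge of $l-1$ circles and invokes the clutching construction to classify bundles by homomorphisms $\pi_1(S_{0,l})\to\Mod(S_{0,k})$. Your write-up merely fills in the standard details (trivialization over a maximal tree, isotopy invariance of clutching maps, conjugacy ambiguity) that the paper leaves implicit.
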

\begin{proof}
Since $S_{0,l}$ is homotopy equivalent to a wedge of $l-1$ circles, any such bundle is determined up to equivalence, via the clutching construction, by a homomorphism $\pi_1(S_{0,l})\rightarrow \pi_0\Diff(S^2,\{p_1,\ldots p_k\}):=\Mod(S_{0,k})$.
\end{proof}

We are now ready to prove Proposition \ref{prop:Finite-Monodromy}.

\begin{proof}[Proof of Proposition \ref{prop:Finite-Monodromy}]
That $\pi_1(\Bl_O(\PP^2)\setminus C)$ is CAT(0) follows directly from Lemma \ref{lem:Finite-Free-By-Free}. For the second part, we know that $\Bl_O(\PP^2\setminus C)$ is the total space of a locally trivial $S_{0,n+1}$-bundle over $S_{0,m+1}$, where $S_{0,k}$ denotes the $k$-punctured 2-sphere. The monodromy group $H$ is a finite subgroup of the mapping class group of $S_{0,n+1}$.  By the solution to the Nielsen realization problem for punctured surfaces \cite[Theorem 11.14]{Zieschang}, we can find a finite-volume hyperbolic surface $V$ homeomorphic to $S_{0,n+1}$ which realizes the action of $H$ by isometries. 

Now let $W$ be any finite-volume hyperbolic surface homeomorphic to $S_{0,m+1}$. The universal cover of both $S_{0,n+1}$ and $S_{0,m+1}$ can be identified with the hyperbolic plane $\Hy^2$.  Mimicking the construction in the proof of Lemma \ref{lem:Finite-Free-By-Free}, we consider the product $V\times \Hy^2$, and let $\pi_1(W)\cong F_m$ act by isometries on $V$ and by deck transformations in the second component.  Again the action is free and proper. The quotient $X$ is therefore a Riemannian manifold whose universal cover can be identified with $\Hy^2\times \Hy^2$, hence is nonpositively curved. 

By construction, $X$ is the total space of an isotrivial $S_{0,n+1}$-bundle over $S_{0,m+1}$. In particular, $X$ is finitely covered by a product $V\times W'$ where $W'$ is the finite cover of $W$ corresponding to the kernel of the monodromy homomorphism. Since $V$ and $W$ have a finite volume, this implies $V\times W'$ has finite volume, and thus that $X$ does as well. The fact that $X$ is homeomorphic to $\Bl_O(\PP^2\setminus C)$ follows from Lemma \ref{lem:Classifying-Sphere-Bundle}.
\end{proof}

\subsection{Branch loci of complete intersections}
\text{}\\
For $n\geq 3$, let $X\subset \PP^n$ be a smooth surface and consider a projection $\pi\colon X\rightarrow \PP^2$. This will exhibit $X$ as a branched cover over $\PP^2$, branched along a singular curve $C\subset \PP^2$.  For a generic projection, $C$ will have only cusps and nodes. 

The branching curve of a generic projection of a smooth cubic surface in $\PP^3$ is a sextic with six cusps on a conic. Zariski \cite{zariski1929problem} showed that in this case the fundamental group of the complement  is $\pi_1(\PP^2\setminus C)\cong \Z/2*\Z/3\cong \PSL_2(\Z)$.  Zariski famously used this calculation to show that the moduli space of irreducible sextics with six cusps in $\PP^2$ is disconnected (for sextic curves with six cusps not on a conic, 
the fundamental group is $\mathbb Z/6$).

Moishezon \cite{Moishezon81} generalized Zariski's result by calculating the analogous fundamental group for generic projections 
of a smooth hypersurface in $\PP^3$ of arbitrary degree $d$. For these, the complement of the branch curve $C$ has fundamental group $\pi_1(\PP^2\setminus C)\cong B_d/Z(B_d)$, where $B_d$ is the braid group of the disk on $d$ strands, and $Z(B_d)$ is its center. This quotient can be identified with a finite index subgroup of the mapping class group of $S_{0,d+1}$. 

Recall that the braid group $B_d$ maps onto the symmetric group $S_d$ with kernel the pure braid group $P_d$. The abelianization of $B_d$ is $\Z$, and is obtained by sending each of the standard positive Artin generators to $1\in \Z$.  The center $Z(B_d)\cong \Z$ is generated by the square of the ``long braid," and since it is a positive braid, it maps to $d(d-1)\in \Z$. Moreover, it is itself a pure braid, and after passing to $P_d$ we have a splitting $P_d\cong P_{0,d}\times Z(B_d)$ where $P_{0,d}$ is the kernel of the restriction of the abelianization map of $B_d$ to $P_d$. Thus the image of $P_d$ in the quotient $B_d/Z(B_d)$ is simply $P_{0,d}$.

Robb extended Moishezon's result to the case where $X\subset \PP^n$ is a smooth, complete intersection of codimension greater than 1 \cite{robb1997branch}. Let $x_1,\ldots, x_{d-1}$ be the standard generators of $B_d$.  For $d\geq 4$, define \begin{equation}
    \tB_d:=B_d/\left\llangle [x_2,(x_3x_1)^{-1}x_2(x_3x_1)]\right\rrangle
\end{equation}
where $\llangle\cdot\rrangle$ indicates normal closure. Robb shows that the center of $\tB_d$ is isomorphic to $\Z\oplus \Z/2$. Let $\eta$ and $\mu$ be the generators of $\Z$ and $\Z/2$, respectively. By the adjunction formula and Riemann-Hurwitz, the degree $\deg(S)$ will be even and a multiple of $d=\deg(X)$. Define an integer \[m=\left\{\begin{array}{cl}\frac{\deg(S)}{d}&\text{$d$ even}\\\frac{\deg(S)}{2d}& \text{$d$ odd}\end{array}\right.\]
Robb's extension of Zariski and Moishezon's result is the following.
\begin{theorem}[\cite{robb1997branch}]\label{thm:Robb}
    Let $X\subset \PP^n$ be a smooth, non-degenerate complete intersection of codimension greater than 1. Suppose $C\subset \PP^2$ is the branch locus of a generic projection $X\rightarrow \PP^2$, and let $C_a$ be a generic affine portion of $C$. Then for some $e\in \{0,1\}$,\[\pi_1(\C^2\setminus C_a)\cong \tB_d \textrm{ and } \pi_1(\PP^2\setminus C)\cong \tB_d/\langle \eta^m\mu^e\rangle .\]
\end{theorem}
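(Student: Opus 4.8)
The plan is to compute both fundamental groups by the braid-monodromy technique of Moishezon and Teicher, adapted to complete intersections, together with the Zariski--van Kampen theorem. First I would fix a generic projection $\pi\colon X\to\PP^2$ of degree $d=\deg(X)$, so that the branch curve $C$ has only nodes and cusps, and choose a generic point with its associated pencil of lines, restricting to a line at infinity $\ell_\infty$ to obtain the affine curve $C_a=C\cap\C^2$. A generic line $\ell$ of the pencil meets $C$ in $\deg(C)$ points, and $\pi^{-1}(\ell)\to\ell$ is a degree-$d$ simple branched cover of $\PP^1$ branched exactly at $\ell\cap C$; as $\ell$ varies this produces the braid monodromy factorization of the full twist $\Delta^2\in B_{\deg C}$ into the local braids attached to the special lines (those tangent to $C$, or through a node, a cusp, or the projection center). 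By Zariski--van Kampen, this factorization yields a presentation of $\pi_1(\C^2\setminus C_a)$ whose generators are the meridians of a generic fiber and whose relations are $V=\beta(V)$ for each braid-monodromy factor $\beta$.

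The technical heart is the \emph{degeneration--regeneration} argument. I would degenerate $X$ inside $\PP^n$ to a union of planes, so that its projected branch locus degenerates to a line arrangement whose braid monodromy is completely explicit; one then \emph{regenerates}, tracking how each node, tangency, and branch point of the degenerate configuration splits and how the corresponding local braid monodromy deforms as the planes smooth back into $X$. Here the hypothesis that $X$ is a complete intersection of codimension $>1$ enters: it controls the combinatorics of the degenerate arrangement and produces, beyond the node and cusp relations present in the hypersurface case treated by Moishezon, exactly one additional relation. Carrying out the Tietze simplification of the resulting presentation---using the cusp relations to pass from the $\deg(C)$ meridian generators to the standard Artin generators $x_1,\dots,x_{d-1}$, and the node relations to impose the braid relations---I expect the extra contribution to collapse precisely to the commutation relation $[x_2,(x_3x_1)^{-1}x_2(x_3x_1)]=1$, giving $\pi_1(\C^2\setminus C_a)\cong\tB_d$.

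For the projective group I would compare $\pi_1(\C^2\setminus C_a)$ with $\pi_1(\PP^2\setminus C)$ via the line at infinity: restoring $\ell_\infty$ adds a single relation killing the product of all meridians, i.e.\ the image of the total twist, which lies in the center of $\tB_d$. So the remaining task is to compute $Z(\tB_d)$---showing it is $\Z\oplus\Z/2=\langle\eta\rangle\oplus\langle\mu\rangle$---and to identify the class of the boundary twist in these generators. The power of $\eta$ is forced by a degree count in the abelianization combined with the adjunction formula and Riemann--Hurwitz, which make $\deg(S)$ even and a multiple of $d$ and translate into exactly the exponent $m$ defined above; the $\Z/2$-ambiguity in the $\mu$-coordinate accounts for the undetermined $e\in\{0,1\}$.

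The step I expect to be the genuine obstacle is the regeneration: correctly enumerating the local braid-monodromy types in the degenerate line arrangement for a codimension $>1$ complete intersection, and proving that their regenerated product simplifies to exactly one new relation of the stated commutator form with no further relations surviving, is a long and delicate bookkeeping computation. A secondary subtlety is pinning down $e$, since the parity distinguishing $e=0$ from $e=1$ depends on finer invariants of the projection than the presentation of $\tB_d$ alone records.
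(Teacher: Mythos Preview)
The paper does not prove this theorem at all: it is stated as Theorem~\ref{thm:Robb} with attribution to Robb \cite{robb1997branch} and used as a black box. The authors invoke it to feed into Theorem~\ref{thm:Complete-Intersection-Abelian}, and the only additional input they take from Robb is the explicit presentation of $\tP_{0,d}$ (Robb's Proposition~2.2), which they quote in order to show $\tB_d$ is virtually abelian.

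Your outline is a fair high-level sketch of the strategy in Robb's original paper---degenerate $X$ to a union of planes, compute the braid monodromy of the degenerate branch configuration, regenerate, and simplify the van Kampen presentation---so in that sense you are reconstructing the right argument, just not one that appears in \emph{this} paper. If your goal is to supply a proof where the paper gives none, be aware that the regeneration step you flag as the obstacle really is the entire content: it is a long case-by-case computation occupying most of Robb's paper, not something that can be compressed into a paragraph. Also, your claim that ``the extra contribution collapses precisely to the commutation relation'' is the conclusion, not an argument; you would need to exhibit the actual braid-monodromy factorization and carry out the Tietze moves. For the purposes of the present paper, however, a citation suffices.
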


One knows that $\PSL_2(\Z)$ is CAT(0); indeed it is virtually free. Conjecturally, the braid group $B_d$ for all $d\geq 1$ is CAT(0), and this has been verified for $d\leq 7$ \cite{BradyMcCammond,Haettel-Kielak-Schwer,jeong2023seven}. A result of Bowers--Ruane \cite{BowersRuane} implies that if $G\times \mathbb Z^n$ is CAT(0), then $G$ is CAT(0). Hence, by the remarks above concerning $P_d$ and $Z(B_d)$, we see that if $B_d$ is CAT(0) then $B_d/Z(B_d)$ is as well. We now show that both of the groups $\widetilde{B}_d$ and $\widetilde{B}_d/\langle \eta^m\mu^e\rangle$ appearing in Theorem \ref{thm:Robb} is CAT(0).  In fact, we will prove:

\begin{theorem}\label{thm:Complete-Intersection-Abelian}
    For any $d\geq 4$, $\tB_d$ is virtually abelian.  In particular, both groups $\pi_1(\C^2\setminus C_a)$ and $\pi_1(\PP^2\setminus C)$ appearing in Theorem \ref{thm:Robb} are CAT(0).
\end{theorem}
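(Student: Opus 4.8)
The plan is to show that $\tB_d := B_d/\llangle [x_2,(x_3x_1)^{-1}x_2(x_3x_1)]\rrangle$ is virtually abelian by directly analyzing the presentation. The starting observation is that in the braid group, $x_1$ and $x_3$ commute (they are disjoint band generators for $d\geq 4$), so $x_3x_1$ is a single element conjugating $x_2$; the relator forces $x_2$ to commute with its conjugate by $x_3x_1$. I would first attempt to leverage the symmetric group quotient $B_d\twoheadrightarrow S_d$: the image of the relator is $[s_2, (s_3s_1)^{-1}s_2(s_3s_1)]$, and one should check what subgroup of $S_d$ this normally generates, i.e. compute the maximal quotient of $S_d$ in which this commutator dies. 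A short computation should show that for $d\geq 4$ the element $(s_3s_1)s_2(s_3s_1)^{-1}$ and $s_2$ generate a copy of $S_3$ (or similar) on which the commutator is nontrivial, so its normal closure is large — giving strong constraints. The cleaner route, though, is likely to identify $\tB_d$ with a known group: Robb's paper should contain enough structural information (he computes its center to be $\Z\oplus\Z/2$), and I expect $\tB_d$ is in fact a finite extension of $\Z^k$ for small $k$, or more precisely contains a finite-index subgroup isomorphic to a finitely generated abelian group.

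Concretely, here is the step sequence I would follow. First, establish that $\tB_d$ surjects onto $S_d$ with kernel $\overline{P}_d$, the image of the pure braid group $P_d$. Second, analyze $\overline{P}_d$: using the standard generators $A_{ij}$ of $P_d$ and the extra relation, show that $\overline{P}_d$ is abelian — this is the crux. The relation $[x_2, (x_3x_1)^{-1}x_2(x_3x_1)]=1$ can be rewritten, using $x_i^2 = A_{i,i+1}$ and the conjugation action of $x_3 x_1$ on $P_d$, as a relation among the $A_{ij}$ forcing commutativity; I would combine it with its $S_d$-conjugates (which are all available since we took the normal closure) to kill all commutators $[A_{ij}, A_{kl}]$. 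Third, once $\overline{P}_d$ is abelian (and finitely generated, since $P_d$ is), conclude $\tB_d$ is virtually abelian: it has a finite-index abelian subgroup $\overline{P}_d$. Fourth, deduce the corollary: $\tB_d/\langle \eta^m\mu^e\rangle$ is a quotient of a virtually abelian group, hence still virtually abelian; and finitely generated virtually abelian groups act properly cocompactly on some $\R^k$ with a suitable flat metric (via Bieberbach-type arguments, or simply because they are CAT(0) — a standard fact, e.g. \cite{Bridson-Haefliger99}), so both $\pi_1(\C^2\setminus C_a)$ and $\pi_1(\PP^2\setminus C)$ are CAT(0).

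The main obstacle I anticipate is the second step: proving that the normal closure of the single commutator relator, inside $P_d$, is exactly the commutator subgroup $[P_d, P_d]$ (equivalently, that $\overline{P}_d = P_d^{\mathrm{ab}} = \Z^{\binom{d}{2}}$), rather than something smaller. One direction is automatic — the relator lies in $[P_d,P_d]$ since $P_d^{\mathrm{ab}}$ is abelian — so $\overline{P}_d$ surjects onto $\Z^{\binom{d}{2}}$; the work is showing the kernel of that surjection is trivial, i.e. that all of $[P_d,P_d]$ is hit. I would handle this by taking enough $B_d$-conjugates of the relator to produce relators $[A_{ij}, A_{kl}]$ for a generating set of pairs, using the fact that $B_d$ acts transitively enough on the $A_{ij}$ and that the specific relator, after reduction, already equals (a conjugate of) $[A_{12}, A_{23}]$ or $[A_{13}, A_{24}]$ up to the pure braid relations. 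If the direct combinatorial computation proves unwieldy, the fallback is to cite Robb's explicit description of $\tB_d$ (Theorem~\ref{thm:Robb} and its proof identify it with an explicit group) and read off virtual abelianness from there.
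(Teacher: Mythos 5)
Your main route has a genuine gap at what you yourself identify as the crux. You propose to show that the image $\overline{P}_d$ of $P_d$ in $\tB_d$ is abelian, and more precisely that the normal closure of the relator meets $P_d$ exactly in $[P_d,P_d]$, so that $\overline{P}_d\cong\Z^{\binom{d}{2}}$. Both claims are false. The "automatic" direction is not automatic: the relator $[x_2,(x_3x_1)^{-1}x_2(x_3x_1)]$ is a commutator of \emph{non-pure} braids (each factor maps to a transposition, $(2\,3)$ and $(1\,4)$ respectively), so there is no reason for it to lie in $[P_d,P_d]$, and in fact it does not project trivially to $P_d^{\mathrm{ab}}$: by Robb's Proposition 2.2 (which the paper quotes), the image of $P_d$ in $\tB_d$ is $\tP_{0,d}\times\Z$, where $\tP_{0,d}$ is generated by $v_1,\dots,v_{d-1},\mu$ with $\mu^2=1$ central, $[v_i,v_j]=1$ for $|i-j|\geq 2$ and $[v_i,v_j]=\mu$ for $|i-j|=1$. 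So $\overline{P}_d$ is a nonabelian (2-step nilpotent) group whose abelianization has rank about $d$, far smaller than $\binom{d}{2}$: the normal closure of the single relator collapses most of $P_d^{\mathrm{ab}}$ while leaving a residual $\Z/2$'s worth of non-commutativity. Consequently the plan of producing all relators $[A_{ij},A_{kl}]$ from $B_d$-conjugates and concluding $\overline{P}_d=P_d^{\mathrm{ab}}$ cannot succeed as stated. (A smaller slip: the image of the relator in $S_d$ is the commutator of the disjoint transpositions $(2\,3)$ and $(1\,4)$, which is already trivial, so the symmetric-group quotient gives no constraint at all — though this is consistent with your first step that $\tB_d$ still surjects onto $S_d$.)

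Your stated fallback — cite Robb's structural description — is in fact the paper's proof: it splits $P_d\cong P_{0,d}\times Z(B_d)$, passes to the image $\tP_d=\tP_{0,d}\times\Z$ in $\tB_d$, invokes Robb's presentation of $\tP_{0,d}$ above, notes that this exhibits $\tP_{0,d}$ as a central extension $1\to\Z/2\to\tP_{0,d}\to\Z^{d-1}\to 1$, and observes that the subgroup generated by $\mu$ and the squares $v_i^2$ is a finite-index copy of $\Z^{d-1}\oplus\Z/2$; hence $\tB_d$ is virtually abelian. Your step 4 (quotients of virtually abelian groups are virtually abelian, and finitely generated virtually abelian groups are CAT(0)) matches the paper's use of its Lemma~\ref{lem:VAbelian-Is-CAT(0)} and is fine. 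So the proposal reaches the right conclusion only via the fallback; the primary argument as written would fail.
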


The second statement will be implied by the first, as a consequence of the following easy lemma. 

\begin{lemma}\label{lem:VAbelian-Is-CAT(0)} Suppose $G$ is finitely generated and virtually abelian. Then $G$ acts properly and cocompactly on a CAT(0) space.
\end{lemma}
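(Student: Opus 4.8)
The plan is to reduce to the case of a finite-index subgroup isomorphic to $\Z^n$ times a finite group, note that such a group acts properly and cocompactly by isometries on a Euclidean space, and then transfer this back to $G$ using the fact that a finite-index overgroup of a CAT(0) group is again CAT(0).

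Concretely: since $G$ is virtually abelian it contains a finite-index abelian subgroup $A$, and since $G$ is finitely generated so is $A$, hence $A\cong\Z^n\times T$ with $T$ finite. Then $A$ acts on $\mathbb{E}^n$ through the projection $A\onto\Z^n$ followed by the translation action of $\Z^n$; this action is by isometries, it is cocompact (the quotient is an $n$-torus), and it is proper because its kernel is the finite group $T$ while $\Z^n$ acts properly. Thus $A$ acts properly and cocompactly by isometries on the CAT(0) space $\mathbb{E}^n$. Next I would invoke the general principle that if $A\le G$ has finite index $k$ and $A$ acts properly and cocompactly by isometries on a CAT(0) space $X$, then $G$ acts properly and cocompactly by isometries on the $\ell^2$-product $X^k$, which is again CAT(0) by \cite{Bridson-Haefliger99}: fixing coset representatives $g_1,\dots,g_k$ of $A$ in $G$, an element $g\in G$ acts on $X^k$ by permuting the factors according to its action on $G/A$ and twisting by suitable elements of $A$ (of the form $g_a^{-1}gg_b$), which act isometrically on $X$. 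Properness of this action follows from properness of the $A$-action, and cocompactness follows once one observes that the quotient of $X^k$ by $G$ is just $X/A$. Applying this with $X=\mathbb{E}^n$ exhibits $G$ acting properly and cocompactly by isometries on $\mathbb{E}^{nk}$, completing the proof.

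I do not expect a genuine obstacle; the statement is classical. The only step with real content is the co-induced action, and even there the verifications are routine --- that the permute-and-twist recipe is a well-defined isometric action, and that properness and cocompactness pass from $A$ to $G$. One small point to keep in mind is that the $G$-action produced this way need not be faithful, which is harmless since being CAT(0) imposes no faithfulness requirement. If one instead wants an action on $\mathbb{E}^n$ itself, one can realize $G$ (modulo a finite normal subgroup) as a cocompact lattice in $\Isom(\mathbb{E}^n)=\R^n\rtimes O(n)$ by averaging an inner product on $A\otimes\R\cong\R^n$ over $G/A$ and using that $H^2(G/A;\R^n)=0$.
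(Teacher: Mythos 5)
Your reduction to $A\cong\Z^n\times T$ and the proper, cocompact $A$-action on $\R^n$ are fine, but the induction step that carries the conclusion from $A$ to $G$ has a genuine gap. The principle you invoke --- that if $A\le G$ has index $k$ and $A$ acts properly and cocompactly on a CAT(0) space $X$, then the permute-and-twist action of $G$ on the $\ell^2$-product $X^k$ is proper \emph{and cocompact} --- is false: the induced action is proper, but its quotient is not $X/A$ (already implausible on dimension grounds), and cocompactness fails in general. Concretely, take $G=\Z$, $A=2\Z$ acting on $X=\R$ by translations, with coset representatives $0,1$. The generator of $G$ then acts on $\R^2$ by $(x_1,x_2)\mapsto(x_2+2,\,x_1)$, so the difference $t=x_1-x_2$ changes only by the reflection $t\mapsto 2-t$; hence $|x_1-x_2-1|$ is a $G$-invariant unbounded continuous function and $\R^2/G$ is not compact, whereas $X/A$ is a circle. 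One can salvage this particular example by passing to the invariant axis $\{x_1-x_2=1\}$, but producing a $G$-invariant convex subspace on which the action is cocompact is exactly the content that cannot be black-boxed: whether a group containing a CAT(0) group with finite index is itself CAT(0) is not a theorem you can cite (to our knowledge it is open in general), so the main route of your proposal does not go through as written.

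The parenthetical remark in your last sentence, however, is the correct argument, and it is essentially the proof in the paper: take a finite-index normal subgroup $H\cong\Z^n$, average an inner product on $H\otimes\R\cong\R^n$ over the finite quotient $F=G/H$ (the paper phrases this as choosing a point of the symmetric space of marked flat tori fixed by the finite monodromy group $F'\le\GL_n(\Z)$), use the vanishing of $H^2(F;\R^n)$ (the coefficients form a $\Q$-vector space and $F$ is finite) to extend the translation action of $H$ to a homomorphism $G\to\Isom(\R^n)$ with finite kernel and discrete cocompact image, and note that such an action is proper and cocompact; faithfulness is not required by the lemma (the paper nevertheless makes the action effective by adding a Euclidean disk factor with an effective $F$-action). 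If you promote that aside from a remark to the main line of the argument, with the $H^2$ vanishing and properness spelled out, you have a complete proof; as it stands, the argument you actually present relies on a false cocompactness claim.
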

\begin{proof} Let $H\unlhd G$ be a finite index normal subgroup isomorphic to $\Z^n$. Then $G$ fits into a short exact sequence \begin{equation}\label{eqn:VAbelian-Extension}
    1\to H\cong\Z^n\to G\xrightarrow[]{\varphi} F\to 1,
\end{equation}
where $F$ is a finite group. From the extension in (\ref{eqn:VAbelian-Extension}), we obtain an algebraic monodromy homomorphism $\rho\colon F\rightarrow \GL_n(\Z)$, making $\Z^n$ into an $\Z[F]$-module.  Let $F'$ be the image of $\rho$. The space of marked, flat $n$-dimensional tori is the symmetric space $X_n:=\SL_n(\R)/\SO_n(\R)$. Since $F'$ is a finite subgroup of $\GL_n(\Z)$ and $X_n$ is a CAT(0) space, we can find a marked, flat $n$-torus $T$ on which $F'$ acts by isometries. Thus we obtain an explicit lattice $\Lambda\leq \R^n$ and a homomorphism $\hat{\rho}\colon G\rightarrow N(\Lambda)$, where the latter is the normalizer of $\Lambda$ in $\Isom(\R^n)$. The image $G'$ of $\hat{\rho}$ acts effectively on $\R^n$ by isometries.  

On the other hand, the kernel of $\hat{\rho}$ is a finite normal subgroup $K\unlhd G$, hence $K\cap H=\{1\}$, and $K$ maps injectively to $F$.  Choose a sufficiently high-dimensional closed 
Euclidean disk $D^k$  on which $F$ acts effectively by isometries and consider the product $X=\R^n\times D^k$. Then $G$ acts via $\hat{\rho}$ on $\R^n$ and by factoring through $\varphi$ on $D^k$. This action is effective by construction, and proper since each point stabilizer is conjugate to subgroup of $F$.
\end{proof}

\begin{proof}[Proof of Theorem \ref{thm:Complete-Intersection-Abelian}]
Since $\tB_d$ is the quotient of $B_d$ by a commutator, the abelianization of $B_d$ factors through $\tB_d$.  Hence, their abelianizations are equal and isomorphic to $\Z$. Let $\tP_d\leq \tB_d$ be the finite-index subgroup which is the image of $P_d$ under the quotient. We know that $P_d$ splits as a product $P_{0,d}\times Z(B_d)\cong P_{0,d}\times \Z$, where $P_{0,d}$ lies in the kernel of the abelianization map. It follows that $\tP_{d}$ also splits as a product $\tP_{0,d}\times \Z$ where $\Z$ lies in the center $Z(\tP_d)$. Since $\tP_d$ has finite index in $\tB_d$, it is enough to show that $\tP_{0,d}$ is virtually abelian by Lemma \ref{lem:VAbelian-Is-CAT(0)}.

Robb \cite[Proposition 2.2]{robb1997branch} proves that $\tP_{0,d}$ is generated by elements $v_1,\ldots, v_{d-1},\mu$ subject to relations 
\begin{itemize}
\item $\mu^2=1$.
\item For all $i\neq j$, $[v_i,v_j]=\left\{\begin{array}{cc}
   1  & |i-j|\geq 2 \\
\mu& |i-j|=1
\end{array}\right.$
\item $[\mu,v_i]=1$ for all $i$.
\end{itemize}
This implies $\tP_{0,d}$ is a central extension
\[1\to \Z/2\to \tP_{0,d}\to \Z^{d-1}\to 1\]
Since $\mu$ has order 2, the subgroup generated by the squares of the $v_i$ and $\mu$ has finite index and is isomorphic to $\Z^{d-1}\oplus \Z/2$.  Thus $\tP_{0,d}$, and hence $\tB_d$, is virtually free abelian.
\end{proof}

\section{Negative Results}
We now turn to examples of quasi-projective groups that are not CAT(0). First, we present a detailed example showing that the complement of the 3-cuspidal quartic together with  the singular fibers under a (non-generic) projection does not have CAT(0) fundamental group. In general, complements of \emph{reducible} plane curves will not be CAT(0), and this is an example of this phenomenon. Second, we consider the family of curves arising from the universal deformation of an isolated plane curve singularity of ADE type. In the case of $E_6$, $E_7$, and $E_8$, we use a result of Wajnryb to show that the fundamental group of the universal family is not CAT(0), proving Theorem \ref{thm:mainB}.

\subsection{The real 3-cuspidal quartic}\label{sec:cuspidal-quartic}
\text{}\\
If $d\le 3$,  $\Mod(\PP^1\setminus \{x_1,...,x_d\})$ is finite, hence the image of the monodromy homomorphism is necessarily finite. In this section, we will closely examine an explicit example of a 3-cuspidal quartic (\emph{i.e.} $d=4$) curve $C$ whose braid monodromy is infinite. Although $\pi_1(\Bl_O(\PP^2)\setminus C)=\pi_1(\PP^2\setminus C)$ will be finite and thus CAT(0), we will nevertheless show that $\pi_1(\Bl_O(\PP^2)\setminus (C\cup L))$ is not CAT(0), where $L $ is the union of singular lines arising from a particular non-generic projection.  We closely follow the exposition of the real 3-cuspidal quartic curve from Catanese and Wajnryb, \cite[Section 4]{catanese20043}.

A \emph{3-cuspidal quartic} is an irreducible plane curve of degree $4$ having 3 ordinary cusps as singularities (see Example \ref{ex:3cuspidalquartic} in the Appendix for a more complete description of its construction). Any two such curves are equivalent up to equisingular deformation, and Zariski showed that the fundamental group of the complement of the 3-cuspidal quartic is $B_3(S^2)$, the braid group of the sphere on 3 strands. If $C\subset \mathbb \PP^2$ is an irreducible quartic curve that is not a 3-cuspidal quartic, then the fundamental group $\pi_1(\mathbb \PP^2\setminus C)$ is abelian (cf. \cite[(4.3)]{dimca2012singularities}).

\begin{example}[\cite{catanese20043}, Section 4]\label{exampleCata}
The following is a real quartic with three cusps: \[C= \{F([x:y:z]=(x^2 + y^2)^2 + x^3z + 9xy^2z +\frac{27}{4}y^2z^2 = 0\}.\]

Since $O:=[0:1:0]\not\in C$,  we can define a projection $\phi_C\colon C\rightarrow \PP^1$ after blowing up at $O$, which projects $[x:y:z]\mapsto [x:z]$ away from $O$. For any $t\in \PP^1$, $\phi_C$ is ramified at $t$ if and only if there exists $p\in C_t$ such that $\frac{\partial F}{\partial y}(p)=0$.
 There are two such ramification points: a double tangency at $y_4=[1:0]$ and a single tangency $y_2=[-1:1]$. There are $2$ more singular fibers from $y_1=[\frac{-9}{8}:1]$ (double cusp), $y_4=[0:1]$ (cusp). 
 
 Let $L=\cup_{i=1}^4L_i$ be the union of the four singular fibers.  After deleting $L$, let  the generators of $\pi_1(\PP^1\setminus \{y_1,\cdots,y_4\})$ corresponding to $y_1$, $y_2$ and $y_3$ be $\tau_1$, $\tau_2$ and $\tau_3$, respectively. In the chart $z\neq 0$, we choose a basepoint on the positive real axis and orient these generators so that they follow a path in the upper half plane, go counterclockwise around exactly one of the punctures (all of which are real), and then go back to the basepoint in the upper half-plane. Since $C$ has degree 4, we have a fiber bundle $\PP^1\setminus \{x_1,...,x_4\}\to \Bl_O(\PP^2)\setminus (C\cup L))\xrightarrow{\phi} \PP^1\setminus \{y_1,...,y_4\}$ as in the setting described above.

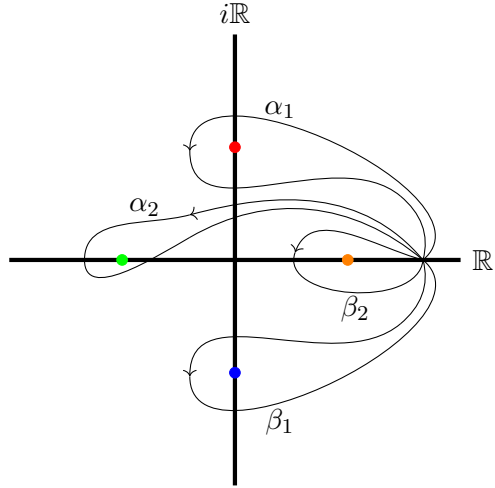
\begin{figure}[h]
    \centering
\begin{tikzpicture}

\draw[ultra thick] (-3,0)--(3,0) node[right]{$\mathbb R$};
\draw[ultra thick] (0,-3)--(0,3) node[above]{$i\mathbb R$};
\filldraw [orange] (1.5,0) circle (2pt);
\filldraw [green] (-1.5,0) circle (2pt);
\filldraw [red] (0,1.5) circle (2pt);
\filldraw [blue] (0,-1.5) circle (2pt);
\draw [->](2.5,0) to[out=40,in=90] (-0.6,1.45); 
\draw(-0.6,1.45) to[out=-90,in=140] (2.2,0.85); 
\draw(2.2,0.85) to[out=-40,in=75] (2.5,0); 
\draw (0.6,2) node  {$\alpha_1$};

\draw(2.5,0) to[out=-40,in=-90] (-0.6,-1.55); 
\draw[<-](-0.6,-1.55) to[out=90,in=-140] (2.2,-0.85); 
\draw(2.2,-0.85) to[out=40,in=-75] (2.5,0); 
\draw (0.6,-2.15) node  {$\beta_1$};

\draw[->] (2.5,0) to[out=165,in=70] (0.8,0.1);
\draw(0.8,.1) to[out=-110,in=-95] (2.5,0) ; 
 \draw (1.6,-0.65) node  {$\beta_2$};

\draw (2.5,0) to[out=140,in=30] (-0.6,0.3); 
\draw [->](2.5,0) to[out=130,in=15] (-0.6,0.6); 
\draw (-0.6,0.3) to[out=-150,in=-90] (-2,0); 
\draw (-0.6,0.6) to[out=-165,in=90] (-2,0); 
\draw (-1.2,0.7) node  {$\alpha_2$};

\end{tikzpicture}
\caption{A choice of generating set for the fundamental group of the fiber at a point along the positive real axis in the base. The punctures $x_1$, $x_2$, $x_3$ and $x_4$ correspond to $\alpha_1$, $\alpha_2$, $\beta_1$ and $\beta_2$, respectively.}
\label{fig:4loops}

\end{figure}

Along the positive real axis, the fiber contains two real punctures and two purely imaginary ones.  We order the punctures and choose representatives for simple loops around the pure imaginary punctures $\alpha_1$ for $x_1$, $\alpha_2$ for $x_2$, and the real punctures $\beta_1$ for  $x_3$, $\beta_2$ for $x_4$ as in Figure \ref{fig:4loops}. The $\tau_i$  act as the following braids in terms of the standard Artin generators $\{\sigma_i\}_{i=1}^3$:
\begin{align*}\tau_1&=\sigma_3\sigma_2^3\sigma_3^{-1}\\\tau_2&=\sigma_2\sigma_1\sigma_2^{-1}\\ \tau_3&=\sigma_1^3\sigma_3^3.\end{align*}
These braids are pictured in Figure \ref{fig:braids}. In terms of the chosen basis $\{\alpha_1,\alpha_2,\beta_1,\beta_2\}$ for $\pi_1(\PP^1\setminus \{x_1,...,x_4\})$ the algebraic monodromy is given by:

\noindent\begin{minipage}{.32\linewidth}
\begin{align*}
\tau_1\colon \alpha_1&\longmapsto \alpha_1 \\
\alpha_2&\longmapsto\alpha_2\beta_2\alpha_2\beta_2\overline{\alpha_2}\overline{\beta_2}\overline{\alpha_2} \\
\beta_1&\longmapsto \beta_1\\
\beta_2&\longmapsto \alpha_2\beta_2\alpha_2\overline{\beta_2}\overline{\alpha_2}\\
\end{align*}
\end{minipage}
\noindent\begin{minipage}{.32\linewidth}
\begin{align*}
\tau_2\colon \alpha_1&\longmapsto \alpha_1\beta_2\beta_1\overline{\beta_2}\overline{\alpha_1} \\
\alpha_2&\longmapsto\alpha_1\beta_2\overline{\beta_1}\overline{\beta_2}\alpha_2\beta_2\beta_1\overline{\beta_2}\overline{\alpha_1} \\
\beta_1&\longmapsto \overline{\beta_2}\alpha_1\beta_2\\
\beta_2&\longmapsto \beta_2\\
\end{align*}
\end{minipage}
\noindent\begin{minipage}{.32\linewidth}
\begin{align*}
\tau_3\colon \alpha_1&\longmapsto \alpha_1\alpha_2\alpha_1\alpha_2\overline{\alpha_2}\overline{\alpha_1} \\
\alpha_2&\longmapsto\alpha_1\alpha_2\alpha_1\overline{\alpha_1}\overline{\alpha_2}\overline{\alpha_1}\\
\beta_1&\longmapsto \beta_2\beta_1\beta_2\overline{\beta_1}\overline{\beta_2}\\
\beta_2&\longmapsto \beta_2\beta_1\beta_2\beta_1\overline{\beta_2}\overline{\beta_1}\overline{\beta_2}\\
\end{align*}
\end{minipage}
where, for ease of notation, we have used $\overline{\alpha}$ to denote $\alpha^{-1}$.
After an easy simplification, the fundamental group of the complement of the 3-cuspidal quartic curve, $\pi_1(\PP^2\setminus C)$, is 
$$\langle \alpha_1,\alpha_2|\alpha_2^{\alpha_1}=\alpha_1^{\alpha_2},\alpha_2\alpha_1^2\alpha_2=1\rangle\cong B_3(S^2),$$
which is finite, and, in particular, CAT(0). 
\end{example}

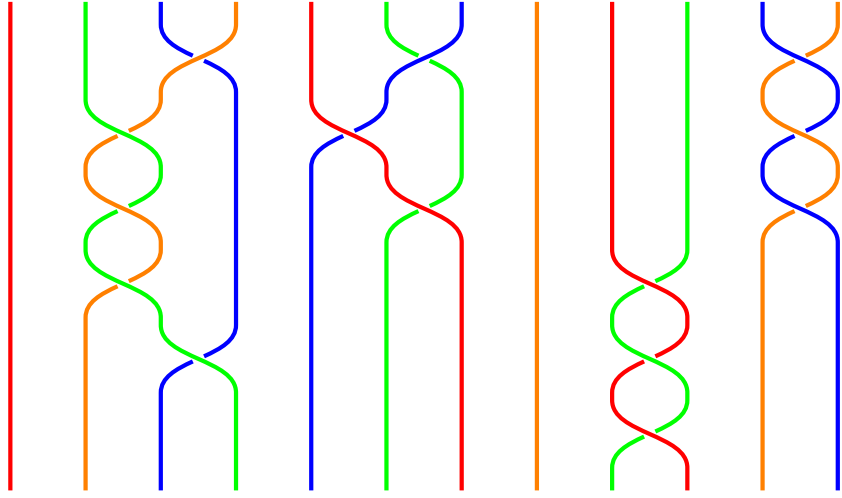
\begin{figure}

\begin{tikzpicture}[
braid/.cd,
number of strands=4,
every strand/.style={ultra thick},
strand 1/.style={red},
strand 2/.style={green},
strand 3/.style={blue},
strand 4/.style={orange},
]
\pic at (0,0) {braid={s_3^{-1} s_2 s_2 s_2 s_3 1 }};
 \pic at (4,0) {braid={s_2^{-1}s_1s_2 1 1 1}};
\pic at (8,0) {braid={s_3 s_3 s_3 s_1 s_1 s_1}};
\end{tikzpicture}

\caption{From left to right, the three 4-stranded braids correspond to $\tau_1$, $\tau_2$, and $\tau_3$. The strands of each braid correspond to $x_1$, $x_2$, $x_3$, $x_4$ in left to right order, and match the colors in Figure \ref{fig:4loops}.}
\label{fig:braids}
\end{figure}

\label{quarticcurve}
Combined with the results of Section \ref{sec:ZVK}, we obtain a presentation $\pi_1(\Bl_O(\PP^2)\setminus (C\cup L))=\langle S|R\rangle,$ where
\begin{align*}S&=\{\alpha_1,\alpha_2,\beta_1,\beta_2,\tau_1,\tau_2,\tau_3\},\\

  R &= \left\{ \begin{array}{l}
    \alpha_1^{\tau_1}=\alpha_1, \alpha_1^{\tau_2}=\beta_1^{\alpha_1\beta_2},\alpha_1^{\tau_3}=\alpha_2^{\alpha_1\alpha_2\alpha_1}; \\
    
    \alpha_2^{\tau_1}=\beta_2^{\alpha_2\beta_2\alpha_2}, \alpha_2^{\tau_2}=\alpha_2^{\alpha_1\beta_2\beta_1\overline{\beta_2}},\alpha_2^{\tau_3}=\alpha_1^{\alpha_1\alpha_2};\\
    
\beta_1^{\tau_1}=\beta_1,\beta_1^{\tau_2}=\alpha_1^{\overline{\beta_2}},\beta_1^{\tau_3}=\beta_2^{\beta_1\beta_2};\\

\beta_2^{\tau_1}=\alpha_2^{\alpha_2\beta_2},\beta_2^{\tau_2}=\beta_2,\beta_2^{\tau_3}=\beta_1^{\beta_2\beta_1\beta_2};\\
\alpha_1\alpha_2\beta_1\beta_2=1.
  \end{array}\right\}
  \end{align*}

Solving for  $\beta_1=\overline{\beta_2}\overline{\alpha_2} \overline{\alpha_1}$, we can simplify the above presentation to the following: 
\begin{align*}S&=\{\alpha_1,\alpha_2,\beta_2,\tau_1,\tau_2,\tau_3\},\\

  R &= \left\{ \begin{array}{l}
   \alpha_1^{\tau_1}=\alpha_1, \alpha_1^{\tau_2}=(\overline\beta_2\overline\alpha_2\overline\alpha_1)^{\alpha_1\beta_2},\alpha_1^{\tau_3}=\alpha_2^{\alpha_1\alpha_2\alpha_1}; \\
   
   \alpha_2^{\tau_1}=\beta_2^{\alpha_2\beta_2\alpha_2}, \alpha_2^{\tau_2}=\alpha_2^{\alpha_1\overline\alpha_2\overline\alpha_1\overline{\beta_2}},\alpha_2^{\tau_3}=\alpha_1^{\alpha_1\alpha_2};\\
   
\beta_2^{\tau_1}=\alpha_2^{\alpha_2\beta_2},\beta_2^{\tau_2}=\beta_2,\beta_2^{\tau_3}=(\overline\beta_2\overline\alpha_2\overline\alpha_1)^{\overline\alpha_2\overline\alpha_1\beta_2}
  \end{array}\right\}
\end{align*}

Now we consider the geometric monodromy of the associated punctured sphere bundle $$\PP^1\setminus \{x_1,...,x_4\}\to \Bl_O(\PP^2)\setminus (C\cup L)\xrightarrow{\phi} \PP^1\setminus \{y_1,...,y_4\}.$$


By \cite[Proposition 2.7]{farb2011primer}, there is an isomorphism \[\Mod(S_{0,4})= (\Z/2\Z\times \Z/2\Z)\rtimes \PSL_2(\Z),\] while \[B_4(S^2)\cong(\Z/2\Z\times \Z/2\Z)\rtimes \SL_2(\Z)\]

Let $q\colon T^2\to T^2/\langle\iota\rangle=S_{0,4}$ be the quotient map, where $\iota$ is the hyperelliptic involution. Identify the torus $T^2$ with a quotient of the unit square (rotated by $\pi/4$) as in Figure \ref{fig:involution}. With respect to the usual group structure on $T^2$, the involution $\iota$ can be identified with $-I$ and the ramification locus on $T^2$ consists of the four points $\tilde A_1=(1/2,1/2)^T,\tilde A_2=(0,1/2),\tilde B_1=(0,0)^T,\tilde B_2=(1/2,0)^T$, which cover the branch points $x_1,x_2,x_3,x_4$ as in Figure \ref{fig:4loops}. 

These points generate the 2-torsion subgroup $\Z/2\Z\times \Z/2\Z\leq T^2$, and are preserved by the action of $\SL_2(\Z)$. Translations of the torus by these four elements comprise the normal Klein four subgroup in the semi-direct product decompositions of $\Mod(S_{0,4})$ and $B_4(S^2)$ above.  
Thus, the product of two elements $(v_1,M_1), (v_2,M_2)\in B_4(S^2)\cong(\mathbb Z/2\mathbb Z\times \mathbb Z/2\mathbb Z)\rtimes \SL(2,\mathbb Z)$ is given by 
$$(v_1,M_1)\cdot (v_2,M_2)=(v_1+M_1\cdot v_2,M_1 M_2)$$ where $A\in \SL_2(\Z)$ acts on $v\in \Z/2\Z\times\Z/2\Z$ via reduction mod 2.

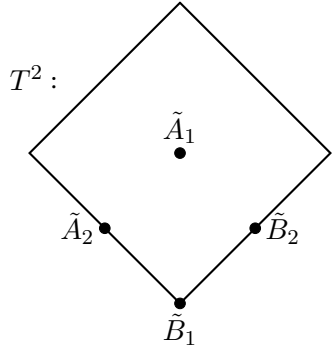
\begin{figure}
    \centering

    \begin{tikzpicture}[line cap=round,line join=round]
        \draw[thick] (-2,0) -- (0,2);
	\draw[thick] (0,2) -- (2,0);
        \draw[thick] (-2,0) -- (0,-2);
        \draw[thick] (0,-2) -- (2,0);
        \filldraw [] (1,-1) circle (2pt);
	\filldraw [] (0,0) circle (2pt);
	\filldraw [] (-1,-1) circle (2pt);
	\filldraw [] (0,-2) circle (2pt);
   \draw[] (0,-2) node [below] {$\tilde B_1$};
   \draw[] (0,0) node [above] {$\tilde A_1$};
   \draw[] (-1,-1) node [left] {$\tilde A_2$};
   \draw[] (1,-1) node [right] {$\tilde B_2$};
   \draw[] (-1.5,1) node [left] {$T^2:$};
\end{tikzpicture}
    
    \caption{Representation of $T^2$ with the 4 ramification points of $\iota$ at $\tilde{A}_1$, $\tilde{A}_2$, $\tilde{B}_1$, and $\tilde{B}_2$. }
    \label{fig:involution}
\end{figure}

The generators of $\SL_2(\Z)$ are the two Dehn twists\[
S=\begin{pmatrix}
1 & 1 \\
0 & 1 
\end{pmatrix}, T=\begin{pmatrix}
1 & 0 \\
1 & 1 
\end{pmatrix},\] both of which fix $\tilde B_1$. We can relate these to the standard Artin generators of $B_4(S^2)$ as follows. Each positive half-twist generator in $S_{0,4}$ lifts to a map of $T^2$  which is isotopic one of these two Dehn twists, but fixing different pairs of the ramification points. To fix notation, let \[
x:=\begin{pmatrix}
\frac{1}{2}  \\
0 
\end{pmatrix},
y:=\begin{pmatrix}
0  \\
\frac{1}{2}
\end{pmatrix}\]
be the two generators of $\Z/2\Z\times\Z/2\Z$, thought of translations on $T^2$. The three generators $\sigma_1,\sigma_2$, $\sigma_3$ then correspond to the following pairs group elements 
\begin{align*}
	\sigma_1&=(0,S) \\
	\sigma_2&=(x,I)\cdot(0,T)\cdot(x,I)=(x,T)\cdot(x,I) =( x+y+x,T)=( y,T)\\

\sigma_3&=(y,I)\cdot( 0,S)\cdot(y,I)=( y,S)\cdot(y,I)=( y+x+y,S)=( x,S)
\end{align*}

Now we can compute the image of the three generators of the fundamental group of the base space, $\tau_1,\tau_2,\tau_3$, in the braid group $B_4(S^2)$, in terms of matrices:

\begin{align*}
\tau_1=\sigma_3\sigma_2^3\sigma_3^{-1}&=(x,S)\cdot
(y,T^3)\cdot
(x,S^{-1})\\

&=(x+x+y,ST^3)\cdot(x,S^{-1})\\

&=(y+y,ST^3S^{-1})
=(0,ST^3S^{-1})\\

\tau_2=\sigma_2\sigma_1\sigma_2^{-1}&=(y,T)\cdot(0,S)\cdot
(y,T^{-1})\\

&=(y,TS)\cdot(y,T^{-1})=(x+y,TST^{-1})\\

\tau_3=\sigma_1^3\sigma_3^3&=(0,S^3)\cdot(x,S^3)
=(x,S^6)
\end{align*}

Recall that $\SL_2(\Z)\cong \Z/4\Z\ast_{\Z/2\Z}\Z/6\Z\cong \langle A\rangle\ast_{\langle -I\rangle} \langle B\rangle$, where \[A=\begin{pmatrix}
0 & 1 \\
-1 & 0 
\end{pmatrix}, B=\begin{pmatrix}
0 & -1 \\
1 & 1
\end{pmatrix}.\]
Thus, $A^2=B^3=-I$ and 

\begin{align*}
	AB&=\begin{pmatrix}
0 & 1 \\
-1 & 0 
\end{pmatrix}
\begin{pmatrix}
0 & -1 \\
1 & 1 
\end{pmatrix}=
\begin{pmatrix}
1 & 1 \\
0 & 1 
\end{pmatrix}=S\\

	AB^2&=\begin{pmatrix}
0 & 1 \\
-1 & 0 
\end{pmatrix}
\begin{pmatrix}
-1 & -1 \\
1 & 0 
\end{pmatrix}=
\begin{pmatrix}
1 & 0 \\
1 & 1 
\end{pmatrix}=T
\end{align*}
Substituting these above, we obtain
\begin{align*}
	\tau_1&=\left(0,(AB)(AB^2)^3(AB)^{-1}\right)=\left(0,ABAB^2AB^2ABA\right)\\
	\tau_2&=\left(x+y,(AB)^2(AB)^3(AB^2)^{-1}\right)=\left(x+y,AB^2AB^2A\right)\\

\tau_3&=\left(x,(AB)^6\right)
\end{align*}

Observe that 
\[\tau_2^{-3}=\left(x+y, AB(AB^2AB^2A)BA\right)=\left(x+y, I\right)\cdot \tau_1,\]
and \[\tau_2\tau_3=\left(x+y, AB^2AB^2A\right)\cdot\left(x, (AB)^6\right)=\left(x, (BA)^3B\right).\]

It follows that $\langle \tau_1,\tau_2,\tau_3\rangle=\langle g_1,g_2,g_3\rangle,$ where 
\begin{align*}g_1&=(x+y,I)=\tau_1\tau_2^3,\\g_2&=\Big(0,(AB^2)^2A\Big)=g_1\tau_2=\tau_1\tau_2^4\\ g_3&=\left(x, (BA)^3B\right)=\tau_2\tau_3.
\end{align*}

The images of $(AB^2)^2A,(BA)^3B\in\SL_2(\Z)$ under the projection to $\PSL_2(\Z)$ generate a free group of rank $2$. Indeed, each has infinite order, and since they start and end in different letters,  their inverses do too. Thus no cancellation occurs after taking products. It follows that $\langle g_2,g_3\rangle\cong F_2$. Now observe that $g_1$ commutes with both $g_2$ and $g_3$; hence, the image of the monodromy is isomorphic to $\Z/2\Z\times F_2.$ The extension of the fundamental group
$$1\to F_3\to \pi_1(\Bl_O(\PP^2)\setminus (C\cup_{i=1}^3 L_i))\to F_3\to 1,$$ with infinite monodromy $\mathbb Z/2\mathbb Z\times F_2$ and infinite kernel of the monodromy. Clearly, $F_3$ has the LIP Property and trivial center, so by Theorem \ref{thm:LIP-Obstruction}, we have the following theorem:

\begin{theorem}\label{InfiniteMono}
The fundamental group of the complement of the union of 3-cuspidal quartic and the singular fibers, $\pi_1(\Bl_O(\PP^2)\setminus (C\cup L))$, is not CAT(0).
\end{theorem}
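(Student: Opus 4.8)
The plan is to apply Theorem \ref{thm:LIP-Obstruction} directly to the short exact sequence
\[1\to F_3\to \pi_1(\Bl_O(\PP^2)\setminus (C\cup L))\to F_3\to 1\]
established above, taking both the normal subgroup $R$ and the quotient $Q$ in that theorem to be $F_3$. The argument then splits into two parts: verifying the hypotheses of Theorem \ref{thm:LIP-Obstruction}, and checking that its conclusion fails for this particular extension.

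The hypotheses are immediate. The group $F_3$ is finitely generated and, being nonabelian free, is centerless; and it has Property LIP because a nonabelian free group acts freely, cocompactly, and nonelementarily on its Cayley tree, hence is acylindrically hyperbolic, so Proposition \ref{prop:AC-is-LIP} applies. It then remains to show that the algebraic monodromy $\rho\colon F_3\to\Out(F_3)$ of the extension has neither finite image nor finite kernel. By the discussion in Section \ref{sec:ZVK}, $\rho$ factors through the geometric monodromy $\phi\colon F_3=\pi_1(\PP^1\setminus\{y_1,\dots,y_4\})\to\Mod(S_{0,4})$, whose image was computed above to be $\Z/2\Z\times F_2$ (via the rank-two free subgroup $\langle (AB^2)^2A,(BA)^3B\rangle\leq\PSL_2(\Z)$). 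Since the natural map $\Mod(S_{0,4})\to\Out(F_3)$ has finite kernel, $\mathrm{Im}(\rho)$ is the image of $\Z/2\Z\times F_2$ under a homomorphism with finite kernel, hence infinite. For the kernel, $\ker\rho\supseteq\ker\phi$, and $\phi$ surjects the torsion-free group $F_3$ onto $\Z/2\Z\times F_2$, which has $2$-torsion; so $\ker\phi$ is a nontrivial normal subgroup of infinite index in a free group, and is therefore infinite (indeed of infinite rank). Thus $\rho$ has infinite image and infinite kernel, and Theorem \ref{thm:LIP-Obstruction} shows that $\pi_1(\Bl_O(\PP^2)\setminus (C\cup L))$ is not CAT(0).

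Essentially all of the real content of this argument has already been carried out in the text preceding the statement — the explicit braids $\tau_1,\tau_2,\tau_3$ and the identification $\langle\tau_1,\tau_2,\tau_3\rangle\cong\Z/2\Z\times F_2$ — so I do not expect a genuine obstacle. The one point deserving a little care is the transition from the geometric monodromy $\phi$ (computed in $B_4(S^2)$, resp.\ $\Mod(S_{0,4})$) to the algebraic monodromy $\rho$ (valued in $\Out(F_3)$): one must confirm that this transition neither collapses the monodromy image to a finite group nor enlarges its kernel to all of $F_3$, and both facts follow once one knows that the kernel of $\Mod(S_{0,4})\to\Out(F_3)$ is finite.
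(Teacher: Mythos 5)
Your proposal is correct and takes essentially the same approach as the paper: apply Theorem \ref{thm:LIP-Obstruction} to the extension $1\to F_3\to \pi_1(\Bl_O(\PP^2)\setminus (C\cup L))\to F_3\to 1$, using the explicit computation that the monodromy image is $\Z/2\Z\times F_2$ (infinite) while its kernel is infinite. The points you spell out — LIP for $F_3$ via Proposition \ref{prop:AC-is-LIP}, the finite (in fact trivial) kernel of $\Mod(S_{0,4})\to\Out(F_3)$, and the torsion argument giving an infinite kernel — are exactly the details the paper leaves implicit, so there is no divergence in method.
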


When we fill in all the singular fibers, $\pi_1(\Bl_O(\PP^2)\setminus C)$ is finite and hence CAT(0), while here $\pi_1(\Bl_O(\PP^2)\setminus (C\cup L)$ is not. A natural question is whether adding back a subset of the singular fibers makes a difference.

Let $L_0$ be the union of the two fibers containing cusps. Since the fibers coming from tangencies depend on the projection, a reasonable modification is to add back the fibers coming from tangencies, i.e., setting $\tau_2=1$ and $\tau_1\tau_2\tau_3=1$. The presentation of the fundamental group becomes
$\pi_1(\Bl_O(\PP^2)\setminus L_0)=\langle S|R\rangle$, where
\begin{align*}
S&=\{\alpha_1,\alpha_2,\beta_2,\tau_1\}\\

  R &= \left\{ \begin{array}{l}
   \alpha_1^{\tau_1}=\alpha_1, \alpha_1=(\overline\beta_2\overline\alpha_2\overline\alpha_1)^{\alpha_1\beta_2},\alpha_1^{\overline\tau_1}=\alpha_2^{\alpha_1\alpha_2\alpha_1}; \\
   
   \alpha_2^{\tau_1}=\beta_2^{\alpha_2\beta_2\alpha_2}, \alpha_2=\alpha_2^{\alpha_1\overline\alpha_2\overline\alpha_1\overline{\beta_2}},\alpha_2^{\overline\tau_1}=\alpha_1^{\alpha_1\alpha_2};\\
   
\beta_2^{\tau_1}=\alpha_2^{\alpha_2\beta_2},\beta_2^{\overline\tau_1}=(\overline\beta_2\overline\alpha_2\overline\alpha_1)^{\overline\alpha_2\overline\alpha_1\beta_2}.
  \end{array}\right\}

\end{align*}
Using that $\alpha_1^{\tau_1}=\alpha_1$ and $\alpha_1^{\overline\tau_1}=\alpha_2^{\alpha_1\alpha_2\alpha_1}$, it follows that 
$\alpha_1\alpha_2\alpha_1=\alpha_2\alpha_1\alpha_2,$ and by
$\alpha_1=(\overline\beta_2\overline\alpha_2\overline\alpha_1)^{\alpha_1\beta_2}$, we obtain $$ \alpha_2=\overline{\alpha_1}\overline{\beta_2}\overline{\alpha_1}.$$ In particular, we can solve for $\beta_2$ in terms of $\alpha_1,\alpha_2$, so $\langle\alpha_1,\alpha_2,\beta_2\rangle=\langle \alpha_1,\alpha_2\rangle$. By $\alpha_2^{\overline\tau_1}=\alpha_1^{\alpha_1\alpha_2}={\alpha_1\alpha_2}\alpha_1\overline\alpha_2\overline\alpha_1$ and $\alpha_1\alpha_2\alpha_1=\alpha_2\alpha_1\alpha_2$, it follows that 
$\alpha_2^{\overline\tau_1}=\alpha_2.$




Therefore $\tau_1$ commutes with $\alpha_1$ and $\alpha_2$. The existence of the projection onto $\langle\tau_1\rangle\cong \Z$ then implies
$$\pi_1(\Bl_O(\PP^2)\setminus (C\cup L_0))\cong \langle \tau_1\rangle \times \langle \alpha_1,\alpha_2\rangle\cong \Z\times \pi_1(\Bl_O(\PP^2)\setminus C)$$
where $\langle \alpha_1,\alpha_2\rangle\cong \pi_1(\Bl_O(\PP^2)\setminus C)$ since killing $\tau_1$ is equivalent to filling all the singular fibers. In particular, since $\pi_1(\Bl_O(\PP^2)\setminus C)$ is finite,  $\pi_1(\Bl_O(\PP^2)\setminus (C\cup L_0))$ is CAT(0) by Lemma \ref{lem:VAbelian-Is-CAT(0)}. Thus, we have proved

\begin{proposition}
    Let $C$ be the 3-cuspidal quartic, and let $L_0$ be the union of the two singular fibers containing cusps.  Then $\pi_1(\Bl_O(\PP^2)\setminus (C\cup L_0))$ is CAT(0).
\end{proposition}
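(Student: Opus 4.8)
The plan is to read the result straight off the explicit presentation of $\pi_1(\Bl_O(\PP^2)\setminus(C\cup L_0))$ obtained above, in which the two tangency fibers are filled back in by imposing $\tau_2 = 1$ and $\tau_1\tau_2\tau_3 = 1$, so that $\tau_3 = \overline{\tau_1}$ and the group is generated by $\{\alpha_1,\alpha_2,\beta_2,\tau_1\}$. The strategy is to prove that this group is isomorphic to $\Z\times B_3(S^2)$, hence finitely generated and virtually abelian, and then quote Lemma \ref{lem:VAbelian-Is-CAT(0)}.

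First I would cut down the generating set. The relations $\alpha_1^{\tau_1} = \alpha_1$ and $\alpha_1^{\overline{\tau_1}} = \alpha_2^{\alpha_1\alpha_2\alpha_1}$ together force $\alpha_1 = \alpha_2^{\alpha_1\alpha_2\alpha_1}$, which rearranges to the braid relation $\alpha_1\alpha_2\alpha_1 = \alpha_2\alpha_1\alpha_2$. The relation $\alpha_1 = (\overline{\beta_2}\,\overline{\alpha_2}\,\overline{\alpha_1})^{\alpha_1\beta_2}$ rearranges to $\alpha_2 = \overline{\alpha_1}\,\overline{\beta_2}\,\overline{\alpha_1}$, i.e.\ $\beta_2 = \overline{\alpha_1}\,\overline{\alpha_2}\,\overline{\alpha_1} \in \langle\alpha_1,\alpha_2\rangle$, so $\beta_2$ is redundant and the group is generated by $\alpha_1,\alpha_2,\tau_1$.

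Next I would show $\tau_1$ is central. We already know $\tau_1$ commutes with $\alpha_1$; and from $\alpha_2^{\overline{\tau_1}} = \alpha_1^{\alpha_1\alpha_2}$ a short computation using the braid relation gives $\alpha_2^{\overline{\tau_1}} = \alpha_2$, so $\tau_1$ commutes with all three generators. There is an evident retraction $\pi_1(\Bl_O(\PP^2)\setminus(C\cup L_0))\to\langle\tau_1\rangle\cong\Z$ (send $\alpha_1,\alpha_2,\beta_2$ to $1$ and fix $\tau_1$; each relation has equal $\tau_1$-exponent-sums on its two sides), and since $\tau_1$ is central the resulting split extension is a direct product $\pi_1(\Bl_O(\PP^2)\setminus(C\cup L_0))\cong\langle\tau_1\rangle\times\langle\alpha_1,\alpha_2\rangle$. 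Killing $\tau_1$ then amounts to filling in all remaining singular fibers, so the second factor is $\pi_1(\Bl_O(\PP^2)\setminus C)\cong B_3(S^2)$, which is finite.

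Putting this together, $\pi_1(\Bl_O(\PP^2)\setminus(C\cup L_0))\cong\Z\times B_3(S^2)$ is finitely generated and contains $\Z$ with finite index, hence is virtually abelian and CAT(0) by Lemma \ref{lem:VAbelian-Is-CAT(0)}. I expect the only real friction to be the bookkeeping in the middle step: after substituting $\beta_2 = \overline{\alpha_1}\,\overline{\alpha_2}\,\overline{\alpha_1}$ and invoking the braid relation, one must check that the remaining relations involving $\beta_2$ and $\tau_1$ all become consequences, so that nothing obstructs the direct-product splitting — this is routine but calls for care with the conventions $h^g = ghg^{-1}$ and $\overline{g} = g^{-1}$.
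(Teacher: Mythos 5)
Your proposal is correct and follows essentially the same route as the paper: derive the braid relation from $\alpha_1^{\tau_1}=\alpha_1$ and $\alpha_1^{\overline{\tau_1}}=\alpha_2^{\alpha_1\alpha_2\alpha_1}$, eliminate $\beta_2$, check $\tau_1$ is central, and conclude $\pi_1(\Bl_O(\PP^2)\setminus(C\cup L_0))\cong\Z\times\pi_1(\Bl_O(\PP^2)\setminus C)\cong \Z\times B_3(S^2)$, which is virtually abelian and hence CAT(0) by Lemma \ref{lem:VAbelian-Is-CAT(0)}. No substantive differences from the paper's argument.
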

In contrast, given any plane curve complement with infinite monodromy, deleting nonsingular fibers in addition to the singular fibers will make the resulting complement, not CAT(0).  Indeed, deleting a nonsingular fiber creates an additional generator in the fundamental group of the base that acts trivially on the fiber.  In particular, the new monodromy will have the same monodromy but an infinite kernel. For some examples of curves with infinite monodromy, we reference several results obtained by Moishezon:

\begin{theorem}[Theorems 1, 2, and 3, \cite{Moishezon81}]\label{thm:Moish123}~
\begin{enumerate}[(i)]
\item Let  $C$ be an irreducible plane curve of degree $d$ that is either smooth or rational with nodes as the only singularities. Then, for a generic projection, the image of braid monodromy is the full Artin braid group $B_d$. 
\item Let $C$ be a union of $d$ lines in general position.  Then, for a generic projection, the image of braid monodromy is the pure braid group $P_d$.
\end{enumerate}
\end{theorem}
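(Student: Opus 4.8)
The plan is to read off both parts from the braid-monodromy description underlying Section~\ref{sec:ZVK}. For a degree-$d$ curve $C$ and a generic projection from a point $O\notin C$, Moishezon's braid monodromy takes values in the braid group $B_d$ and is generated by the local monodromies $\tau_j$ around the critical values of $C\to\PP^1$; each $\tau_j$ is a conjugate of $\sigma_1$ when the $j$-th singular fibre comes from a simple tangency --- so its image in $S_d$ is a transposition --- and a conjugate of $\sigma_1^2$ when it comes from a node --- so its image in $S_d$ is trivial. Thus everything comes down to locating these conjugates precisely enough to recognise the subgroup they generate.

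First I would prove (ii), which is essentially self-contained. If $C=\ell_1\cup\dots\cup\ell_d$ is a union of lines in general position, its only singularities are the $\binom d2$ nodes $\ell_i\cap\ell_j$, and for generic $O$ these are the only critical values of the projection. Since each $\ell_i$ misses $O$ it is a section of the pencil of lines through $O$, so the branched cover $C\to\PP^1$ has trivial monodromy; hence the braid monodromy image already lies in the pure braid group $P_d$, and the local monodromy at $\ell_i\cap\ell_j$ is the square of the half-twist exchanging the strands labelled $i$ and $j$. I would then take a real generic arrangement, with the $\binom d2$ nodes at distinct real abscissae and the usual system of arcs from the base point, and check by the standard real-picture bookkeeping that the resulting $\binom d2$ conjugates of squared half-twists are, after one global conjugation and reordering, exactly the standard generators $A_{ij}$ of $P_d$. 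This is a finite if fiddly calculation, and it exhibits the image as all of $P_d$.

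For (i) I would argue by degeneration to the line case. Write $\mathcal B\le B_d$ for the braid monodromy image. Because $C$ is irreducible the cover $C\to\PP^1$ is connected, and since it has only simple branch points its monodromy is a transitive subgroup of $S_d$ generated by transpositions, hence equals $S_d$; so $\mathcal B$ surjects onto $S_d$, and in fact contains the $d(d-1)$ honest half-twists from the tangent lines (respectively $2(d-1)$ of them in the nodal-rational case), whose images already generate $S_d$. To upgrade this to $\mathcal B=B_d$ I would degenerate $C$ to a union $C_0$ of $d$ general lines --- through a generic pencil of degree-$d$ curves when $C$ is smooth, keeping the nodes when $C$ is rational --- and apply the Moishezon--Teicher regeneration rules: the braid monodromy of $C$ is recovered from that of $C_0$, computed in (ii), by replacing the local monodromy at each node of $C_0$ by that of its smoothing, a product of two conjugate half-twists whose product is the original conjugate of $\sigma_1^2$, together with the half-twists arising along the regenerated vertical lines. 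One then tracks these generators and verifies that they assemble into a full ``sewn'' system of half-twists generating all of $B_d$. In the rational case the $\binom{d-1}2$ nodes of $C$ itself only add further conjugates of $\sigma_1^2$, which already lie in $P_d$ and change nothing.

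The hard part is this last regeneration analysis: one has to check that the degeneration loses no monodromy and that the half-twists produced near each regenerated node and each vertical line really generate $B_d$ rather than some proper subgroup surjecting onto $S_d$. This delicate local computation is exactly the content of \cite{Moishezon81}, and for the applications in this paper it would be entirely reasonable to prove (ii) in detail and simply quote (i).
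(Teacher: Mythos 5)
There is nothing in the paper to compare against: Theorem \ref{thm:Moish123} is imported verbatim as Theorems 1--3 of \cite{Moishezon81}, and the paper offers no proof of it (nor is one needed for the corollary, which only uses surjectivity of the monodromy onto $B_d$ and the non-freeness of $\Mod(S_{0,d})$). So the only question is whether your sketch would stand on its own.

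Part (ii) of your outline is sound: since each line is a section of the pencil through $O$, the permutation monodromy is trivial and the image lies in $P_d$, each node contributes a conjugate of a squared half-twist, and for an explicit generic real arrangement the classical bookkeeping does identify these with the generators $A_{ij}$; this is a complete proof modulo a finite computation you have not written out. Part (i), however, has a genuine gap, and it is exactly the one you flag yourself: everything reduces to showing that after degenerating $C$ to a union of lines, the regenerated local braids generate all of $B_d$, and that the degeneration loses no monodromy. That step is not supplied; moreover the ``regeneration rules'' you invoke are Moishezon--Teicher technology that encapsulates precisely this analysis, so citing them is circular if the goal is an independent proof, and your statement of the rule is imprecise (the full twist at a regenerated node is replaced by two half-twists about nearby arcs, not by a factorization whose product literally equals the original $\sigma_1^2$-conjugate; one must also account for the additional tangencies created by the regeneration and justify that the family can be chosen so the braid monodromy varies only by these local substitutions). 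Your fallback --- prove (ii) directly and quote (i) from \cite{Moishezon81} --- is in effect what the paper does, and is the reasonable course here.
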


As a quick application of Theorem \ref{thm:Moish123}, we obtain the following corollary, giving infinitely many examples of non-CAT(0) plane curve complements. Similar considerations apply to case (ii) above.  
\begin{corollary}
Let $C$ be an irreducible plane curve of degree $d\geq 4$ that is smooth or rational with nodes as the only singularities, and let $L=\cup_{i=1}^nL_i$ be the union of singular fibers for a generic projection. Then $\pi_1(\Bl_O(\PP^2)\setminus (C\cup L))$ is not CAT(0).
\end{corollary}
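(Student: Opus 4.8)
The plan is to apply the contrapositive of Theorem~\ref{thm:LIP-Obstruction} to the extension supplied by the Zariski--van Kampen machinery of Section~\ref{sec:ZVK}. Write $G=\pi_1(\Bl_O(\PP^2)\setminus(C\cup L))$ and $d=\deg(C)$. That section produces
\[1\to F_{d-1}\to G\to F_{n-1}\to 1,\]
with algebraic monodromy $\rho\colon F_{n-1}\to\Out(F_{d-1})$ which factors through the braid monodromy $\phi\colon F_{n-1}\to B_d$ (realized inside $\Mod(S_{0,d})$ acting on $\pi_1(S_{0,d})\cong F_{d-1}$). To conclude that $G$ is not CAT(0) it is enough to verify: (i) $F_{d-1}$ is finitely generated with trivial center; (ii) $F_{n-1}$ has Property~LIP; and (iii) $\rho$ has \emph{both} infinite image and infinite kernel. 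Item (i) is automatic since $d\geq 4$ gives $d-1\geq 2$.

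The engine for (ii) and (iii) is Theorem~\ref{thm:Moish123}(i): for a generic projection of a curve $C$ of the stated type, the image of the braid monodromy $\phi$ is the \emph{full} braid group $B_d$. Since $B_d$ is nonabelian for $d\geq 3$, it is not a quotient of a cyclic group, so $n-1\geq 2$; hence $F_{n-1}$ is a nonabelian free group, which is hyperbolic and therefore acylindrically hyperbolic, so Proposition~\ref{prop:AC-is-LIP} gives Property~LIP. (Alternatively one can compute $n$ from the class of the dual curve plus the number of nodes, obtaining $n=d(d-1)$ for $C$ smooth and $n=2(d-1)+\binom{d-1}{2}$ for $C$ rational nodal, both $\geq 3$ when $d\geq 4$; but the soft argument is cleaner.)

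For (iii), consider the composite $B_d\to\Out(F_{d-1})$ through which $\rho$ factors. A standard generator $\sigma_i$ acts on the fiber $S_{0,d}$ as a half-twist supported in a disk enclosing two punctures, so $\sigma_i^2$ is the Dehn twist about a simple closed curve separating those two punctures from the remaining $d-2\geq 2$; this curve is essential precisely because $d\geq 4$, so its Dehn twist has infinite order in $\Mod(S_{0,d})$ and acts with infinite order on $F_{d-1}$, hence has infinite order in $\Out(F_{d-1})$. As $\sigma_i$ lies in the image of $\phi$, the class of $\sigma_i$ lies in $\rho(F_{n-1})$, which is therefore infinite. For the kernel, $\rho$ factors as $F_{n-1}\xrightarrow{\phi}B_d\to\Out(F_{d-1})$ with $\phi$ surjective, so $\ker\rho\supseteq\ker\phi$; if $\ker\phi$ were trivial we would get $F_{n-1}\cong B_d$, which is impossible because $B_d$ has nontrivial center for $d\geq 3$ while $F_{n-1}$ (with $n-1\geq 2$) is centerless. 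Thus $\ker\phi$, being a nontrivial subgroup of the torsion-free group $F_{n-1}$, is infinite, and hence so is $\ker\rho$. Theorem~\ref{thm:LIP-Obstruction} now yields that $G$ is not CAT(0).

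The part I expect to require the most care is (iii): one must be precise about which mapping class group carries the monodromy --- the $d$-punctured sphere $S_{0,d}$ versus the $d$-punctured disk whose mapping class group is literally $B_d$, together with the bookkeeping of the distinguished point at infinity on the fiber --- and one must make sure that Moishezon's surjection onto $B_d$ really survives to an infinite subgroup of $\Out(F_{d-1})$ rather than collapsing. This is exactly where the hypothesis $d\geq 4$ (equivalently, $\Mod(S_{0,d})$ infinite) is indispensable; for $d=3$ the monodromy image in $\Out(F_2)$ is finite, so the degree bound cannot be weakened. By contrast the infinite-kernel step is soft, using only that $B_d$ is centered (hence non-free) for $d\geq 3$, and checking Property~LIP for $F_{n-1}$ is routine once $n-1\geq 2$ is established.
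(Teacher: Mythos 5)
Your proposal is correct and follows essentially the same route as the paper: invoke Moishezon's theorem that the generic braid monodromy is all of $B_d$, deduce that the monodromy of the extension $1\to F_{d-1}\to G\to F_{n-1}\to 1$ has infinite image and infinite kernel, and conclude via Theorem~\ref{thm:LIP-Obstruction}. The only differences are cosmetic --- you certify the infinite kernel by comparing centers of $F_{n-1}$ and $B_d$ where the paper notes that the surjection of a free group onto the non-free group $\Mod(S_{0,d})$ cannot be injective, and you spell out the LIP/trivial-center hypotheses and the passage to $\Out(F_{d-1})$ slightly more explicitly than the paper does.
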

\begin{proof}
    For $d\geq 4$, $\Mod(S_{0,d})$ is infinite, and by Theorem \ref{thm:Moish123}(i), in this case the braid monodromy is all of $B_d$.  Since both compositions $B_d\rightarrow B_{d}(S^2)$ and $B_d(S^2)\rightarrow \Mod(S_{0,d})$ are surjective, it follows that the geometric monodromy $\rho\colon \pi_1(S_{0,n})\rightarrow \Mod(S_{0,d})$ is surjective. However, as $\Mod(S_{0,d})$ is not free, $\rho$ cannot be injective. We conclude that $\pi_1(\Bl_O(\PP^2)\setminus (C\cup L))$ is not CAT(0) by Theorem \ref{thm:LIP-Obstruction}.
\end{proof}

\subsection{Semi-universal deformations of ADE singularities}\label{UDAEDS}
\text{}\\
Recall  (cf.\cite{greuel2007introduction}, Chapter II, Cor.1.17),  that for a germ $X$ given by $f(x_1,\cdots, x_n)=0$ with an isolated singularity at the origin, 
the semi-universal deformation $\widetilde X \rightarrow \C^\tau$  of $f$ can be 
given as the map induced by projection ${\widetilde X} \subset {{\mathbb C^n}\times {\mathbb C}^{\tau}} \rightarrow {\mathbb C}^{\tau}$ 
where $\widetilde X$ is the hypersurface $F(x,t)= f(x)+\sum_1^{\tau} t_jg_j(x)$ with $g_j(x)$ being 
a $\mathbb C$-basis of the Tjurina algebra ${\mathbb C}\{x_1,\cdots, x_n\}/\langle f,{{\partial f} \over {\partial x_1}},\cdots,{{\partial f} \over {\partial x_n}}\rangle$. 
In the case of a weighted homogeneous singularity $f=0$, the Tjurina algebra coincides with the Milnor algebra.  Moreover, the complement to the 
discriminant of $F$ (\emph{i.e.} the subset of $\mathbb C^{\tau}$ where $F(x,t)=0$ is smooth, which is also weighted homogeneous if $f$ is)
is a retract of the complement to the corresponding hypersurface in $\mathbb C^{\tau}$. 

We consider here semi-universal deformations of germs corresponding to simple singularities of plane curves, which are indexed by a root system $R$ of type $A_n$, $D_n$, $E_6,$ $ E_7,$ or $ E_8$. Accordingly, we have polynomials $f_R(x)$ and $F_R(x,t)$ as in the preceding discussion. For example, in the $E_6$ case one has a germ $f_{E_6}(x)=x_1^3+x_2^4$, with 
$F_{E_6}(x,t)=x_1^3+x_2^4+t_1+t_2x_1+t_3x_2+t_4x_2^2+t_5x_1x_2+t_6x_1x_2^2$. Let $U_R\subset \C^\tau$ be the complement of the discriminant locus of $F_R$, and let $V_R\subset \C^n\times \C^\tau$ be its pre-image.  The projection $\pi_R\colon V_R\rightarrow U_R$ is a locally trivial fiber bundle with the fiber diffeomorphic to an affine curve $C_R$.


 Topologically, $C_R$ is homeomorphic to the interior of a surface with boundary having first Betti number $\mu(R)$ and $b(R)$ boundary components,  where $\mu(R)$ is the Milnor number and $b(R)$ is the number of branches of singularity corresponding to $R$. The germs of functions having simple singularities can be 
chosen to be weighted homogeneous, \emph{i.e.} generic linear combinations of monomials: $x_1^{i_1}x_2^{i_2}$ where ${i_1\over w_1}+{i_2 \over w_2}=1$ for appropriate weights $(w_1,w_2)$, in which case $\mu(R)=(w_1-1)(w_2-1)$ (cf. \cite{dimca2012singularities}).
The number of branches $b(R)$ coincides with the number of irreducible factors of the germ or, alternatively, the number of connected components of the strict transform of the germ in a resolution of its singularity. For example, $\mu(E_6)=(3-1)(4-1)=6$, and  $b(E_6)=1$.

Setting $g(R)=\mu(R)+b(R)-1$, we see that $\pi_1(C_R)\cong F_{g(R)}$, the free group on $g(R)$ generators. On the other hand, the base space $U_R$ is biholomorphic to the quotient of 
 the finite affine hyperplane complement corresponding to $R$ by the Weyl group $W_R$. According to Deligne's work, this quotient is a classifying space for $A_R$, the Artin group of type $R$. Since $C_R$ is also aspherical, this implies the total space $V_R$ is aspherical as well. In particular,  $\pi_2(U_R)=1$ and one has a short 
 exact sequence $$1 \rightarrow F_{g(R)} \rightarrow \pi_1(V_R) \xrightarrow{(\pi_R)_*} A_R\rightarrow 1$$
 
 The locally trivial fiber bundle $\pi_R\colon V_R\rightarrow U_R$ induces a geometric monodromy homomorphism $\phi_R\colon A_R\rightarrow \Mod(C_R)$, where $\Mod(C_R)$ is the mapping class group of $C_R$ fixing the boundary pointwise.  It was shown by Perron-Vannier \cite{PerronVannier} that 
$\phi_R$ is injective when $R$ has type $A_n$ or $D_n$. In contrast,  Wajnryb showed that $\ker\phi_R$ is infinite when $R=E_6$, $E_7$, or $E_8$. In all cases, the standard Artin generators map to Dehn twists on the surface, hence the image of the geometric monodromy is necessarily infinite. We will leverage these to facts to show that $\pi_1(V_R)$ cannot be CAT(0) in these cases.

\subsection{Wajnryb's theorem and non-CAT(0) families}
\begin{definition}Let $A$ be an Artin group. A representation $\phi\colon A\rightarrow\Mod(S)$ is called \emph{geometric} if each of the standard Artin generators is sent to Dehn twists about simple closed curves on $S$.
\end{definition}
\begin{theorem}[Theorem 3, \cite{wajnryb1999artin}]
    Let $R$ be the Artin group associated to either $E_6$, $E_7$, or $E_8$. There does not exist an injective geometric representation $\phi\colon A\rightarrow \Mod(S)$ for any $S$.
\end{theorem}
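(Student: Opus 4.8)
The plan is to reduce the three cases to $R=E_6$ and then exhibit an explicit nontrivial element of the kernel of any geometric representation. For the reduction, the subgroup of $A_{E_7}$ generated by $a_1,\dots,a_6$, and likewise the corresponding subgroup of $A_{E_8}$, is isomorphic to $A_{E_6}$, since $E_6$ is the full subgraph of $E_7$ and of $E_8$; and the restriction of a geometric representation to such a standard parabolic subgroup is again geometric (the parabolic's standard generators still map to Dehn twists). So if no geometric representation of $A_{E_6}$ is injective, the same holds for $A_{E_7}$ and $A_{E_8}$, and it suffices to treat $E_6$.

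Assume then that $\phi\colon A_{E_6}\to\Mod(S)$ is geometric and injective, with standard generators $a_1,\dots,a_6$ and $\phi(a_i)=T_{c_i}$ for essential simple closed curves $c_i\subset S$. I would first use the standard dictionary between Dehn-twist relations and intersection numbers: for $a\ne b$, one has $T_aT_b=T_bT_a$ iff $i(a,b)=0$, and $T_aT_bT_a=T_bT_aT_b$ iff $i(a,b)=1$ (if $i(a,b)\ge 2$ then $T_a,T_b$ generate a rank-$2$ free group, so the braid relation forces intersection number exactly one). Reading off the Coxeter relations of $A_{E_6}$, I may isotope the $c_i$ so that $\{c_1,\dots,c_6\}$ realizes the $E_6$ graph exactly: consecutive curves meet once, non-consecutive curves are disjoint. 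Passing to a regular neighborhood $\Sigma\subseteq S$ of $c_1\cup\cdots\cup c_6$, the curves fill $\Sigma$; the union is a graph with $5$ vertices and $10$ edges, so $\chi(\Sigma)=-5$, and in fact $\Sigma$ is the annulus-plumbing along the $E_6$ graph, i.e.\ homeomorphic to the Milnor fiber $C_{E_6}$ of Section~\ref{UDAEDS}, a genus-$3$ surface with one boundary component. In particular $\phi(A_{E_6})$ is carried by $\Sigma$, and the central element $\Delta^2=(a_1\cdots a_6)^{12}$ of $A_{E_6}$ (the Coxeter element to the power of the Coxeter number) maps to a power of the boundary twist $T_{\partial\Sigma}$.

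The core of the proof is to produce a word $w=w(a_1,\dots,a_6)$ with $w\ne 1$ in $A_{E_6}$ but $\phi(w)=1$ in $\Mod(S)$. The raw material is the \emph{chain relation}: a length-$k$ subchain $c_{j_1},\dots,c_{j_k}$ of the configuration fills a subsurface of $\Sigma$ whose boundary twist(s) equal $(T_{c_{j_1}}\cdots T_{c_{j_k}})^{N}$, with $N=2k+2$ for $k$ even and $N=k+1$ for $k$ odd. What is special about $E_6$ is the trivalent node, which lies on two genuinely distinct long subchains of the configuration (the two arms of length $2$ together with the central node give a length-$5$ chain, while the short arm gives shorter through-node chains). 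Using the chain relations to express $T_{\partial\Sigma}$ — equivalently $\phi(\Delta^2)$, up to a power — in terms of the $T_{c_i}$ in two different ways yields an identity among the $T_{c_i}$ that is \emph{not} a consequence of the defining relations of $A_{E_6}$; unwinding it produces the desired $w$. One then verifies $w\ne 1$ in $A_{E_6}$ using standard structure theory of finite-type Artin groups (torsion-freeness, the center being infinite cyclic, or the known faithful Lawrence--Krammer-type linear representation), while the surface-level computation on $\Sigma$ gives $\phi(w)=1$. Hence $w\in\ker\phi\setminus\{1\}$, contradicting injectivity; this proves the theorem (and in fact Wajnryb shows $\ker\phi$ is infinite).

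I expect the genuine difficulty to be exactly this last step: pinning down which closed-up chain identity the $E_6$ plumbing actually imposes — and understanding why the parallel construction fails for types $A_n$ and $D_n$, where Perron--Vannier \cite{PerronVannier} prove the monodromy injective, so that any candidate relation there must already follow from the Artin relations — followed by the bookkeeping on $\Sigma$ needed to certify $\phi(w)=1$ while keeping $w$ nontrivial upstairs. The twist dictionary and the identification of $\Sigma$ with the Milnor fiber are routine; the real content is the explicit relation, which is the substance of Wajnryb's computation in \cite{wajnryb1999artin}.
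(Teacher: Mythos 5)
Your reduction of $E_7$ and $E_8$ to $E_6$ (restriction of a geometric representation to the standard parabolic, which is isomorphic to $A_{E_6}$ by van der Lek) and your identification of a regular neighborhood of the twist curves with the $E_6$ plumbing surface $C_{E_6}$ are fine, and they match how the result is used and framed in the paper. But the heart of the theorem is missing. Your argument for the $E_6$ case is a plan, not a proof: you assert that expressing (a power of) the boundary twist of $\Sigma$ via chain relations ``in two different ways yields an identity among the $T_{c_i}$ that is \emph{not} a consequence of the defining relations of $A_{E_6}$,'' but you never write down the identity, never prove that it fails in $A_{E_6}$, and never verify the corresponding mapping-class-group computation. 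You acknowledge this yourself in the last paragraph; that acknowledged step is exactly the content of the theorem, so nothing has been proved.

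Moreover, the proposed mechanism is doubtful as stated. Chain relations are relations among twists along $A_k$-subchains, and the analogous bookkeeping in types $A_n$ and $D_n$ (including star/chain relations expressing boundary twists) produces only identities that do hold in the Artin group --- consistent with Perron--Vannier's injectivity --- so comparing chain-relation expressions for $T_{\partial\Sigma}$ cannot by itself certify that the resulting relation lies outside the normal closure of the Artin relations; that nontriviality claim is precisely the hard point and needs a genuine argument. Wajnryb's actual proof supplies it differently: he uses the Garside structure of $A_{E_6}$, factors the Garside element as $\Delta=(a_2a_4a_6a_1a_3a_5)^6=bc$, shows that every geometric representation satisfies $\phi(ba_1ca_1ba_1)=\phi(a_1ba_1ca_1b)$, and then verifies by a Garside normal-form computation that $ba_1ca_1ba_1\neq a_1ba_1ca_1b$ in $A_{E_6}$, yielding an explicit element of $\ker\phi$ (and in fact an infinite kernel). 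Your appeal to ``torsion-freeness, the center being infinite cyclic, or a faithful linear representation'' would not by itself decide whether your (unspecified) word is nontrivial; some explicit computation of this kind has to be carried out. As it stands, the proposal has a genuine gap at its central step.
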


Let $\gamma_1,\gamma_2$ be essential simple closed curves on a finite type surface, and let $T_{\gamma_i}$ be the Dehn twist along $\gamma_i$, $i=1,2$. We may assume that $\gamma_1$ and $\gamma_2$ are in general position and have been isotoped to have a minimal number of intersections. The subgroup generated by $\{T_{\gamma_1},T_{\gamma_2}\}$ is determined as follows \[\langle T_{\gamma_1},T_{\gamma_2}\rangle\cong\left\{\begin{array}{cc}\Z^2,&|\gamma_1\cap \gamma_2|=0\\B_3,&|\gamma_1\cap \gamma_2|=1\\F_2,&|\gamma_1\cap \gamma_2|\geq 2\end{array}\right.\]where $B_3$ is the 3-strand braid group of the disk. Under a geometric representation, each of the standard Artin generators for the $E_6$, $E_7$ or $E_8$ type must go to a non-separating simple closed curve of $S$. A regular neighborhood of the collection of curves giving the representation for $E_6$ must look as in Figure \ref{fig:surface}. 

\begin{figure}[h]
    \centering
\begin{tikzpicture}

\draw (4,4) to[out=180,in=0] (-4,4);
\draw (4,0) to[out=180,in=0] (-4,0);
\draw (-4,4) to[out=180,in=90] (-5.5,2);
\draw (-5.5,2) to[out=-90,in=-180] (-4,0);

\draw (-3.7,1.8) to[out=80,in=100] (-3.2,1.8);
\draw (-3.9,2) to[out=-80,in=-80] (-3,2);
\draw [blue](-2.7,2) to[out=90,in=90] (-4.3,2);
\draw [blue](-4.3,2) to[out=-90,in=-90] (-2.7,2);

\draw (-1.2,1.8) to[out=80,in=100] (-0.7,1.8);
\draw (-1.4,2) to[out=-80,in=-80] (-0.5,2);

\draw [blue](-0.2,2) to[out=90,in=90] (-1.8,2);
\draw [blue](-1.8,2) to[out=-90,in=-90] (-0.2,2);

\draw (1.3,1.8) to[out=80,in=100] (1.8,1.8);
\draw (1.1,2) to[out=-80,in=-80] (2,2);

\draw [blue](2.3,2) to[out=90,in=90] (0.7,2);
\draw [blue](0.7,2) to[out=-90,in=-90] (2.3,2);

\draw [blue](-3.1,1.8) to[out=90,in=90] (-1.25,1.8);
\draw [blue][dotted](-3.1,1.8) to[out=-90,in=-90] (-1.25,1.8);

\draw [blue](-0.6,1.8) to[out=90,in=90] (1.25,1.8);
\draw [blue][dotted](-0.6,1.8) to[out=-90,in=-90] (1.25,1.8);

\draw [blue](-0.95,1.95) to[out=170,in=-170] (-0.85,4);
\draw [blue][dotted](-0.95,1.95) to[out=10,in=-10] (-0.85,4);

\draw (-0.9,3) node  {$a_1$};
\draw (-1,2.2) node  {$a_4$};
\draw (-3.5,2.2) node  {$a_2$};
\draw (-2.2,1.7) node  {$a_3$};
\draw (0.2,1.7) node  {$a_5$};
\draw (1.6,2.2) node  {$a_6$};

\draw (4,2) ellipse (0.5 and 2);

\filldraw  (6,2) circle (2pt);
\filldraw  (7,2) circle (2pt);
\filldraw  (8,2) circle (2pt);
\filldraw  (9,2) circle (2pt);
\filldraw  (10,2) circle (2pt);
\filldraw  (8,3) circle (2pt);
\draw (6,2) -- (7,2);
\draw (7,2) -- (8,2);
\draw (8,2) -- (9,2);
\draw (9,2) -- (10,2);
\draw (8,3) -- (8,2);

\node at (8, 3.3) {$a_1$};
\node at (6,1.7) {$a_2$};
\node at (7,1.7) {$a_3$};
\node at (8,1.7) {$a_4$};
\node at (9,1.7) {$a_5$};
\node at (10,1.7) {$a_6$};

\end{tikzpicture}
\caption{The Dynkin diagram of $E_6$ and the surface with boundary $C_{E_6}$ associated with it, along with the curves which define the geometric representation of $A_{E_6}$. }
\label{fig:surface}

\end{figure}
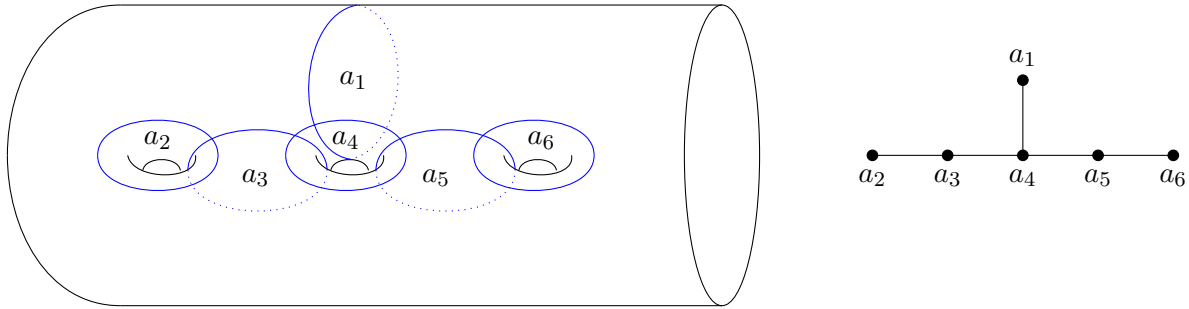

Wajnryb uses the natural Garside structure on these groups to show that any such representation cannot be injective by finding an explicit element in the kernel of $\phi$. Specifically, let $\{a_1,\ldots, a_6\}$ be the standard Artin generators of $E_6$ as in Figure \ref{fig:surface}.  The Garside element is $\Delta=(a_2a_4a_6a_1a_3a_5)^6$. Wajnryb finds a factorization $\Delta=bc$ and shows that \begin{equation}\label{eqn:Wajnryb-Relation}
    \phi(ba_1ca_1ba_1)=\phi(a_1ba_1ca_1b)
\end{equation} but that $ba_1ca_1ba_1\neq a_1ba_1ca_1b$ in $A_{E_6}$. By a result of van der Lek \cite{vanderLek} (see also \cite{Paris}), for any root system $R$ containing $E_6$ there is an injection $A_{E_6}$ into $A_R$. Wajnryb then argues that for any geometric representation $\phi$ of $A_R$, Equation \ref{eqn:Wajnryb-Relation} must still be satisfied, hence $\phi$ cannot be injective either.  This takes care of the cases $R=E_7$ and $R=E_8$. The following appears as Theorem \ref{thm:mainB} in the introduction.

\begin{theorem}\label{Rootsystemthem} Let $R$ be the root system of type $E_6$, $E_7$, or $E_8$ and let $V_R$ be the universal family associated to the singularity of type $R$. Then $\pi_1(V_R)$ is not CAT(0).
\end{theorem}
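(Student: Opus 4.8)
The plan is to apply the LIP obstruction of Theorem \ref{thm:LIP-Obstruction} to the short exact sequence
\[
1 \to F_{g(R)} \to \pi_1(V_R) \xrightarrow{(\pi_R)_*} A_R \to 1
\]
established in Section \ref{UDAEDS}. First I would check the hypotheses of the theorem: the fiber group $R_{\mathrm{fib}} = F_{g(R)}$ is finitely generated (indeed free of finite rank, since $g(R) = \mu(R) + b(R) - 1 \geq 2$ for $R \in \{E_6, E_7, E_8\}$) and has trivial center because a nonabelian free group is centerless. Next I need the quotient $A_R$ to have Property LIP. For this I would invoke Proposition \ref{prop:AC-is-LIP}: it suffices to know that the Artin group $A_R$ of type $E_6$, $E_7$, or $E_8$ is acylindrically hyperbolic. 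This is known — finite-type Artin groups of rank at least $2$ that are not a direct product (equivalently, with connected Dynkin diagram of rank $\geq 2$) are acylindrically hyperbolic, so $A_{E_6}$, $A_{E_7}$, $A_{E_8}$ all qualify.

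With the hypotheses verified, Theorem \ref{thm:LIP-Obstruction} tells us that if $\pi_1(V_R)$ were CAT(0), then the algebraic monodromy $\rho_R \colon A_R \to \mathrm{Out}(F_{g(R)})$ would have either finite image or finite kernel. The final step is to rule out both alternatives using Wajnryb's theorem. The geometric monodromy $\phi_R \colon A_R \to \Mod(C_R)$ sends each standard Artin generator to a Dehn twist about an essential simple closed curve, and since $C_R$ has infinite mapping class group (its first Betti number is $\mu(R) \geq 6$) and the curves involved must intersect, the image of $\phi_R$ is infinite; passing to $\mathrm{Out}(F_{g(R)})$ via the natural map $\Mod(C_R) \to \mathrm{Out}(\pi_1(C_R))$ we see that $\rho_R$ also has infinite image, since the kernel $\Mod(C_R) \to \mathrm{Out}(F_{g(R)})$ is generated by boundary Dehn twists and does not swallow the image of $\phi_R$ (each generator maps to a nontrivial outer automorphism). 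On the other hand, Wajnryb's theorem produces an explicit nontrivial element $w = ba_1ca_1ba_1(a_1ba_1ca_1b)^{-1} \in \ker\phi_R \setminus \{1\}$; I need to observe that in fact $\ker\phi_R$ is infinite, which follows because $w$ has infinite order — or, more simply, because Wajnryb's argument shows $\ker\phi_R$ contains the normal closure of a nontrivial element in a group with no nontrivial finite normal subgroups, forcing it to be infinite. Since $\ker\phi_R \subseteq \ker\rho_R$ (a mapping class acting trivially on the surface acts trivially on $\pi_1$, a fortiori on $\mathrm{Out}$), $\rho_R$ has infinite kernel. Thus $\rho_R$ has neither finite image nor finite kernel, so $\pi_1(V_R)$ is not CAT(0).

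The main obstacle I anticipate is the comparison between $\ker\phi_R$ and $\ker\rho_R$, and between $\mathrm{image}(\phi_R)$ and $\mathrm{image}(\rho_R)$: one must be careful that the passage from the mapping class group of the bounded surface $C_R$ to $\mathrm{Out}(F_{g(R)})$ neither creates new kernel that invalidates the "infinite image" claim nor collapses the image so much that Wajnryb's kernel element becomes irrelevant. Concretely, the point is that $\Mod(C_R)$ (mapping classes fixing the boundary pointwise) surjects onto a subgroup of $\mathrm{Out}(\pi_1(C_R))$ with kernel generated by the boundary twists, and one checks that this kernel is central and does not contain Wajnryb's element, so $\ker\rho_R$ still contains an infinite subgroup, while the images of the $a_i$ remain of infinite order in $\mathrm{Out}(F_{g(R)})$. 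A clean way to sidestep the delicacy entirely is to note that whichever way the dichotomy of Theorem \ref{thm:LIP-Obstruction} is phrased, the existence of a nontrivial element of $\ker\phi_R$ of infinite order together with two Artin generators whose Dehn twists generate a nonabelian (hence infinite) subgroup already contradicts "finite image or finite kernel" at the level of $\Mod(C_R)$, and the conclusion descends to $\rho_R$ since $\rho_R$ factors through $\phi_R$.
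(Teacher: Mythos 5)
Your overall strategy (LIP obstruction plus Wajnryb) is the right one, but the verification of the hypotheses of Theorem \ref{thm:LIP-Obstruction} contains a genuine error that breaks the argument as written: you claim that $A_{E_6}$, $A_{E_7}$, $A_{E_8}$ are acylindrically hyperbolic and hence have Property LIP by Proposition \ref{prop:AC-is-LIP}. This is false. These are irreducible Artin groups of \emph{spherical} type, so $Z(A_R)\cong\Z$ is infinite, and a group with infinite center is never acylindrically hyperbolic. Property LIP also fails directly: taking $N=Z(A_R)\trianglelefteq A_R$, every infinite finitely generated subgroup $N_0<N$ is central, so $C_{A_R}(N_0)=A_R$ is infinite. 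Thus Theorem \ref{thm:LIP-Obstruction} cannot be applied to the extension $1\to F_{g(R)}\to \pi_1(V_R)\to A_R\to 1$ with quotient $A_R$, and your concluding ``clean way to sidestep'' does not address this, since the problem is not the passage from $\Mod(C_R)$ to $\Out(F_{g(R)})$ but the failure of the LIP hypothesis for the quotient group itself. (The correct acylindricity statement, due to Calvez--Wiest, is for the central quotient $A_R/Z(A_R)$.)

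The paper's proof is built precisely to get around this. It passes to the finite-index subgroup $A'=\alpha^{-1}(n\Z)\cong P'\times Z$ of $A_R$ (where $\alpha$ is the abelianization, $P'=\ker\alpha$, and $Z=Z(A_R)$), pulls back to the corresponding finite-index subgroup $G'\leq\pi_1(V_R)$, and uses that the center acts by a boundary multitwist to produce a central splitting $G'\cong H'\times\Z$; by Bowers--Ruane, CAT(0)-ness of $G'$ reduces to that of $H'$. One then applies Theorem \ref{thm:LIP-Obstruction} to $1\to F_{g(R)}\to H'\to P'\to 1$, where $P'$ embeds as a finite-index subgroup of $A_R/Z$ and so is acylindrically hyperbolic (Calvez--Wiest), hence LIP. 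Finally, Wajnryb's element $g=(ba_1ca_1ba_1)(a_1ba_1ca_1b)^{-1}$ is a commutator, hence lies in $P'=\ker\alpha$, and since $P'$ is torsion-free the kernel of the monodromy restricted to $P'$ is infinite. So the remaining steps of your outline (Wajnryb gives an infinite kernel, Dehn twists give an infinite image, with some care about $\Mod(C_R)$ versus $\Out(F_{g(R)})$) are in the right spirit, but they must be carried out for this modified extension; without the reduction that splits off the center, the obstruction theorem simply does not apply.
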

\begin{proof}
    Let $G=\pi_1(V_R)$. The center $Z=Z(A_R)$ of $A_R$ is infinite cyclic and virtually splits off as a direct factor. That is, there exists a finite index subgroup $A'\leq A_R$ such that $Z\leq A'$ and $A'\cong P'\times Z$.  Explicitly, the abelianization map $\alpha\colon A_R\to\Z$ is obtained by sending every positive Artin generator to $1\in \Z$. Since the center is generated by a positive word, its image under $\alpha$ is a nontrivial positive integer $n\in \Z$. Thus, we may take $A'=\alpha^{-1}(n\Z)$, which has finite index in $A_R$, and contains $Z$ as a retract. In particular, we can take $P'=\ker(\alpha)$ to obtain $A'=P'\times Z$.

    If $V'$ is the cover of $V_R$ corresponding to $G'=(\pi_R)_*^{-1}(A')$ we have an extension
    \[1\to F_{g(R)}\to G'\xrightarrow{(\pi_R)_*} A'\to 1\]
    Since $G'\leq G$ has finite index, if $G$ is CAT(0) then $G'$ is too. Under the monodromy action $\phi$, the center of $A_R$ acts by a multi twist along the boundary of $C_R$.  In particular, since $Z(A_R)$ is free, there is a splitting $\sigma\colon Z\rightarrow G'$ such that $\sigma(Z)$ is central in $G'$. Since $Z$ is clearly still a retract of $G'$, we obtain a direct product decomposition $G'=H'\times \sigma(Z)$.  By a result of Bowers--Ruane \cite{BowersRuane}, $G'$ is CAT(0) if and only if $H'$ is. 
    
    By construction, $H'$ fits into a short exact sequence \[1\to F_{g(R)}\to H'\to P'\to 1\]
    We claim that this extension satisfies the hypotheses of Theorem \ref{thm:LIP-Obstruction} and that the restriction of the monodromy $\phi$ to $P'$ contains the nontrivial infinite kernel found by Wajnryb.  This will imply $H'$ is not CAT(0), and thus that $G$ is not CAT(0) either. First, $F_{g(R)}$ has trivial center as $g(R)\geq 2$.  Secondly, as $P'\cap Z=1$, $P'$ maps isomorphically onto a finite index subgroup of $A_R/Z$.  The latter is acylindrically hyperbolic by a result of Calvez--Wiest \cite{CalvezWiest}. Thus, $P'$ is itself acylindrically hyperbolic and has the LIP property by Proposition \ref{prop:AC-is-LIP}. 
    
    Finally, we verify that the restriction of $\phi$ to $P'$ has infinite kernel. Since $F_{g(R)}$ is torsion-free, any torsion element of $H'$ injects into $P'\leq A_R/Z$. On the other hand, by \cite[Theorem 4.5]{Bestvina-Artin}, any torsion element of $A_R/Z$ is conjugate to the image of a positive word. Since $P'=\ker\alpha$, this means it is torsion-free. As $A_R$ is torsion-free, it suffices to show that $P'$ contains an element of $\ker\phi$. Recall from Equation (\ref{eqn:Wajnryb-Relation}) and the discussion above that $A_R$ contains elements $a_1,b,c$  satisfying  \[\phi(ba_1ca_1ba_1)=\phi(a_1ba_1ca_1b) \quad\text{ and } \quad ba_1ca_1ba_1\neq a_1ba_1ca_1b.\] In particular, this means $g=(ba_1ca_1ba_1)(a_1ba_1ca_1b)^{-1}\in \ker\phi$. On the other hand, $g$ is clearly in the commutator subgroup of $A_R$, which is just $\ker\alpha=P'$.
\end{proof}

\appendix
\section{Background from the algebraic geometry of plane curves}
\subsection{Fundamental groups of complements.}\label{A1}
We give a brief overview of the topology of plane curves for non-experts, including some classical results on the numerical invariants.  We also describe a construction that associates with each plane curve, a locally trivial fibration, such that the fundamental group of the total space is an extension of a free group by another free group. 

\subsubsection{Pencils of lines.}\label{pencils} 
Let $C\subset \PP^2$ be a plane curve and let $O=[a:b:c]$ be a point in $\PP^2\setminus C$.
The lines in $\PP^2$ containing $O$ are parametrized by a projective line $B=Z(ax+by+cz)$ in the Grassmannian of lines in $\PP^2$, which is itself isomorphic to $\PP^2$ and classically called {\it the dual plane} ${\PP^2}^{\vee}$. Since each line $L=Z(l_1x+l_2y+l_3z)$ passing through $O$ satisfies $l_1a+l_2b+l_3c=0$, we can identify the line $L$ with the point $[l_1:l_2:l_3]\in B$. To $O$ one associates a map $p: \PP^2\setminus \{O\}\rightarrow B$ (called {\it the projection with center at $O$}) that assigns to $P \in \PP^2\setminus  \{O\}$ the unique line in $\PP^2$ containing $O$ and $P$. If one identifies (non-canonically) $B$ with a line in $\PP^2\setminus  \{O\}$ then this map assigns to $Q$ the unique intersection point of the line $OP$ with $B$. The projection of the complement to $O$ is a holomorphic map that cannot be extended to a map of $\PP^2$ but can be extended to the map of the Hirzebruch surface $F_1$ that is the blow-up $\Bl_O(\PP^2)$ of $\PP^2$ at $O$. The fibers of this extension are projective lines, while the fibers of $p$ are affine lines. 

For $b\in B$, let $L_b$ be the line passing through $O$ and $b$. The restriction of this projection to $C$ gives a (ramified) covering $p_C: C\rightarrow B$ such that for generic $b \in B$ the corresponding line $L_b$ 
intersects $C$ at smooth points only and transversally. In this case, the number of preimages is equal to the degree $d$ of $C$. 
For arbitrary $b\in B$,  the sum of the {\it intersection multiplicities} of $L_b$ with $C$  is equal to  $d$. The intersection multiplicity is greater than one if either the line $L_b$ contains a singular point of $C$ or it is tangent to $C$. If the order of tangency of $L_b$
is greater than 2, it is called a {\it flex} (if the order of tangency is greater than $3$, a {\it higher-order flex}), and if $L_b$ contains more than one point of tangency, it is called a {\it multiple tangent} (a {\it bitangent} if this number is $2$). 

Information about the special fibers of pencils of lines is contained 
in the classical Pl\"ucker formulas (cf. \cite{kleiman1977enumerative}). For example, if $C$ contains as singularities only $\delta$ ordinary nodes  and $\kappa$ ordinary cusps 
(\emph{i.e.} the singularities having local equation $y^2=x^2$ and $x^2=y^3$, respectively) and the pencil contains only the lines that either have a tangency
of order $2$ and are transverse to the lines of the tangent cones of all singular points (such pencils are called {\it generic}) then the number $d^{\vee}$ of the lines in the pencil containing 
fewer than $d$ intersections with $C$ is 
\begin{equation}\label{plucker}
    d^{\vee}=d(d-1)-3\kappa -2\delta
\end{equation}
It is convenient to describe the singular fibers of non-generic pencils in terms of the dual curve of $C$, \emph{i.e.} the curve $C^{\vee}$ in ${\PP^2}^\vee$ that is the projective closure 
of the set of points formed by the lines tangent to $C$ at its smooth points. Singularities of $C^{\vee}$ correspond to the flexes and multiple tangents of $C$ (cf. \cite{kleiman1977enumerative})
while the ordinary flexes (resp. ordinary bitangents) of $C$ correspond to the ordinary cusps (resp. ordinary nodes) of $C^{\vee}$. 
Thus the pencil in $\PP^2$ consisting of the lines containing $O$ corresponds to a line $P^{\vee}$ in 
${\PP^2}^{\vee}$ whose special fibers of the pencil corresponding to the intersection points of $P^{\vee}$ with $C^{\vee}$ or a line in ${\PP^2}^{\vee}$ 
corresponding to the pencils in $\PP^2$ centered at the singular points of $C$. 

The numerical data of non-generic pencils can be obtained 
using the remaining Pl\"ucker formulas for curves with ordinary nodes cusps, flexes, and bitangents (the number of the latter two denoted by $\iota$ and 
$\tau$, respectively) are given by
\begin{align}\label{plucker2} 
 d&=d^{\vee}(d^{\vee}-1)-3\iota-2\tau, \\\label{plucker3}
 \iota&=3d(d-2)-6\delta-8\kappa, \\ \label{plucker4}\kappa&=3d^{\vee}(d^{\vee}-1)-6\tau-8\iota.
\end{align} 
The symmetry of the 4 equations in (\ref{plucker})--(\ref{plucker4}) reflects the duality relation ${C^{\vee}}^{\vee}=C$ (cf. \cite{kleiman1977enumerative}).
\begin{example}\label{ex:3cuspidalquartic}
Let $C$ be a cubic curve with one node (\emph{e.g.} $y^2z=x^3-x^2z$) and thus numerical invariants $\delta=1,\kappa=0$. Then $d^{\vee}=4, \iota=3,\tau=0$.
Hence, $C^{\vee}$ is a 3-cuspidal quartic. It has no nodes and, by duality, has one bitangent corresponding to the node of $C$. There are 3 lines in $\PP^2$ connecting the node of $C$ with the intersection point of one of 3 pairs of inflectional tangents of $C$. 
A pencil in ${\PP^2}^{\vee}$ corresponding to either of these 3 lines $L_i$ in $\PP^2$, has 4 special fibers containing respectively two tangency points 
(this line in ${\PP^2}^{\vee}$ corresponds to the node of $C$), two cusps (the line corresponding to the intersection point in $\PP^2$ of two inflectional lines), one cusp (this fiber corresponds to the intersection point of $L_i$ with remaining inflectional tangent) and finally, the fiber corresponding to the intersection point of $L_i$ 
with $C$  outside of the node and containing one simple tangency point.
\end{example} 

\subsubsection{Braid monodromy} Let $S \subset B$ be the set of singular values, and let $L=\cup_{s\in S}L_s$ be the set of singular lines. Then $p_C: C \setminus L \rightarrow B\setminus S$ is an unramified covering space, and the restriction 
of $p\colon \PP^2\setminus (C\cup L)\rightarrow B\setminus S$ (also denoted $p$) is a locally trivial fibration. Both its base and fibers are complements to a finite set of points in $\PP^1$:
this is the set $S\subset B$ in the former case and $O\cup p_C^{-1}(b)\subset L_b,~b\in B\setminus S$ in the latter. Fix a basepoint $b_0\in B\setminus S$, a point $x_0\in X_0=p^{-1}(b_0)$ and let $i_0\colon X_0\rightarrow \PP^2\setminus (C\cup L)$ be the inclusion.  
Since $S\neq \emptyset$, $B\setminus S$ is aspherical,  hence the long exact sequence of the fibration determines an extension
\begin{equation}\label{extension}
   1 \rightarrow \pi_1(X_0,x_0) \xrightarrow[]{(i_0) _* }\pi_1(\PP^2\setminus (C\cup L),x_0) \xrightarrow[]{p_* } \pi_1(B\setminus S,b_0) \rightarrow 1
\end{equation}
The identification of $\pi_1(B\setminus S,b_0)$ with the free group can be made by a choice of simple closed curves $\{\gamma_s\mid s\in S\}$ in $B\setminus S$ based at $b_0$ but otherwise disjoint, and which circle each puncture in $S$. Similarly, a choice $\{\delta_i\mid 1\leq i\leq d\}$ of loops in $X_0$ based at $x_0$ give an identification $\pi_1(F_0,f_0)$ with the free group $F_d$. 

The extension can be described as follows. Let $\gamma\colon[0,1]\rightarrow B\setminus S$ represent a loop based at $b_0 $ (\emph{i.e.} satisfying $\gamma(0)=\gamma(1)=b_0$). Pulling back $p\colon \PP^2\setminus (C\cup L)\rightarrow B\setminus S$ via $\gamma$ yields a punctured $\PP^1$-bundle over the interval $[0,1]$ which is trivial since $[0,1]$ is contractible. A trivialization  $\Phi\colon \gamma^*(\PP^2\setminus (C\cup L))\rightarrow [0,1]\times X_0$ such that $\Phi_0\colon X_0\rightarrow X_0$ is the identity determines an orientation-preserving diffeomorphism $\Phi_1\colon X_0\rightarrow X_0$. We can assume this diffeomorphism is the identity outside of a sufficiently large disk containing $p_C^{-1}(b_0)$ (we may choose $x_0$ sufficiently close to $O$ so that it also lies outside this disk). The isotopy class of this diffeomorphism is independent of homotopy class of $\gamma$ rel endpoints, as well as the choice of trivialization. The group of isotopy classes of diffeomorphisms of the disk with $d$ punctures is the braid group $B_d$.

\begin{definition}
The homomorphism $\beta\colon \pi_1(B\setminus S,b_0) \rightarrow B_{d}$ into the braid group 
assigning the braid $\beta(\gamma) \in B_{d}$ corresponding to the class of isotopy of the above diffeomorphism is called {\it braid monodromy} associated to $C$.
\end{definition} 

\begin{example} {\it Braids corresponding to singular fibers of the pencil.} Each fiber $p^{-1}(s), s\in S$ contains either a singular point of $C$ or a tangency point. 
{Let $\gamma_s$ be a path in $B$ connecting $b_0$ and $s$ (avoiding other points in $S$). Then $p^{-1}(s)\cap C$ contains fewer points than  $p^{-1}(b_0)$ }and one can show that 
there is a 1-complex (vanishing fold) such that extension of the diffeomorphism of pairs $I \times (\pi^{-1}(b_0),C\cap p^{-1}(b_0) \rightarrow (\PP^2,C)$ well defined over $I$ with removed preimage of 1, to the map of $1 \times (\pi^{-1}(b_0),C\cap p^{-1}(b_0)$ contracts this complex to a point.

If $s$ corresponds to a point with multiplicity $2$ or a simple tangency point, then this vanishing fold is just a segment that gets contracted to a point as one moves along $\gamma$ from $b_0$ to $s$. 
In this case, it is called {\it the vanishing cycle}. Denote it $\delta_s$ and let $\gamma(s)$ be a loop that is union of $\gamma_s \setminus D_s\cap \gamma_s$ where $D_s$ is a small disk centered at $s$, the boundary of $D_s$ 
oriented counterclockwise, and then returning back to $b_0$ along $\gamma_s$. The braid monodromy assigns to $\gamma(s)$ some power of the Dehn twist about $\delta_s$.
More precisely, if the projection looks locally as a projection of $y^k=x^2$ onto $x$-axis, the braid corresponding to $\gamma_s$ is the $k$-th power of the Dehn twist, and the braid is conjugate to $a_1^k$ where $a_1$ is one the standard generators of the braid group (cf.\cite{Moishezon81}, \cite{cogolludo2011braid}). 
\end{example}

\begin{theorem}[Zariski--van Kampen]\label{app:Zariskitheorem}
Let $C \subset \PP^2$ be an algebraic curve, $P \in \PP^2 \setminus C$ and with the same notation as above, let $\beta\colon\pi_1(B \setminus S,b_0) \rightarrow B_d$ be the braid monodromy. Then:
\begin{enumerate}[(a)]

\item The split extension (\ref{extension}) is determined by $\varphi \colon \pi_1(B\setminus S,b_0) \rightarrow \Aut(F_d)$, where $\varphi$ is the composition of $\beta$ and the Artin representation $\rho_A\colon B_d\rightarrow \Aut(F_n)$ (cf. \cite{farb2011primer})

\item Choose $\tilde b \in B, \tilde b \ne b_0$. Then the group $\pi_1(\PP^2 \setminus p^{-1}(S \cup \tilde b) \cup C)$ has the following presentation ($N=|S|$)
\begin{equation}
 \pi_1(\PP^2 \setminus p^{-1}(S \cup \tilde b) \cup C)=\langle t_1,\ldots, t_N, x_1,\ldots, x_d \mid x_i^{t_j}=\beta_j(x_i)\rangle 
\end{equation} 
(here $j=1,\cdots, N, i=1, \cdots d$)

\item (complements to projective and affine curves) If $s \in B\setminus S$ then $\pi_1(\PP^2\setminus C)$ (resp.  $\pi_1(\mathbb C^2\setminus C)$ where $\mathbb C^2=\PP^2\setminus  p^{-1}(\tilde b))$) has the following presentation:
\begin{equation}\label{projgroup} \pi_1(\PP^2\setminus C)=\{ x_1,\ldots, x_d \vert \beta_j(x_i)=x_j, x_1\cdots x_d=1\}
\end{equation}
resp.
\begin{equation} \pi_1(\C^2\setminus C)=\{ x_1,\ldots, x_d \vert \beta_j(x_i)=x_j\}
\end{equation}
\end{enumerate}
\end{theorem}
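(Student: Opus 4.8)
The backbone of the argument is the locally trivial fibration $p\colon \PP^2\setminus(C\cup L)\to B\setminus S$ recorded before the statement, together with the fact that punctured projective lines are aspherical with free fundamental group. The plan is to first extract the split extension and identify its monodromy (part (a)), then read off the semidirect-product presentation (part (b)), and finally fill in fibers to descend to the affine and projective curve complements (part (c)). For part (a), I would start from the homotopy long exact sequence of $p$. Since $S\neq\emptyset$, the base $B\setminus S$ is a $K(\pi,1)$ with $\pi$ free, so $\pi_2(B\setminus S)=0$ and the sequence collapses to the extension (\ref{extension}), with kernel $\pi_1(X_0)\cong F_d$ and quotient $\pi_1(B\setminus S)$ free. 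Because the quotient is free the extension splits, so $\pi_1(\PP^2\setminus(C\cup L))\cong F_d\rtimes_\varphi \pi_1(B\setminus S)$, where $\varphi$ is the conjugation (algebraic monodromy) action of the base on the fiber. The substantive point is to identify $\varphi$ with $\rho_A\circ\beta$: for a loop $\gamma$ based at $b_0$, parallel transport along $\gamma$ produces the diffeomorphism $\Phi_1\colon X_0\to X_0$ whose isotopy class is by definition $\beta(\gamma)\in B_d$, and the conjugation action of $[\gamma]$ on $\pi_1(X_0)=F_d$ is exactly $(\Phi_1)_*$, which is the Artin representation $\rho_A(\beta(\gamma))$ of that braid on the free group of the punctured disk. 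This matching of the topological monodromy of the bundle with the Artin action is the heart of (a).

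Part (b) is then nearly formal. The space $\PP^2\setminus(p^{-1}(S\cup\tilde b)\cup C)$ is the total space of the restriction of $p$ to the base $B\setminus(S\cup\{\tilde b\})$, which is $\PP^1$ minus $N+1$ points, hence a $K(\pi,1)$ whose fundamental group is freely generated by meridians $t_1,\dots,t_N$ encircling the points of $S$ (the loop around $\tilde b$ being the product of these). The same fibration reasoning as in (a) gives a split extension $1\to F_d\to \pi_1\to F_N\to 1$, i.e.\ $\pi_1\cong F_d\rtimes_\varphi F_N$ on the fiber generators $x_1,\dots,x_d$ (the fiber here is an affine line minus $d$ points, hence free with no product relation) and the base generators $t_1,\dots,t_N$. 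Writing out the standard presentation of a semidirect product of free groups leaves exactly the conjugation relators $x_i^{t_j}=\varphi(t_j)(x_i)=\beta_j(x_i)$, with no relations among the $x_i$ and none among the $t_j$.

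For part (c) I would fill in fibers. Adding back the singular fiber over $s_j$ fills the puncture $s_j$ in the base, so the meridian $t_j$ bounds a disk and becomes nullhomotopic; by a van Kampen argument this imposes $t_j=1$, and the conjugation relator $x_i^{t_j}=\beta_j(x_i)$ degenerates to $\beta_j(x_i)=x_i$. Doing this over all of $S$ while keeping $p^{-1}(\tilde b)$ removed yields $\pi_1(\C^2\setminus C)=\langle x_1,\dots,x_d\mid \beta_j(x_i)=x_i\rangle$, since the generic fiber remains an affine line minus $d$ points. Filling in $p^{-1}(\tilde b)$ as well replaces the affine fiber by a projective line minus $d$ points; the loop around the point at infinity now bounds a disk in $\PP^1$, imposing the single extra relation $x_1\cdots x_d=1$ and producing the presentation (\ref{projgroup}).

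The main obstacle I anticipate lies precisely in the fiber-filling steps of (c), and, relatedly, in pinning down the monodromy in (a): the fibration $p$ degenerates over each $s\in S$, so ``adding back'' $p^{-1}(s)$ is not covered by local triviality. One must analyze the local model of the pencil near a singular fiber --- the vanishing-cycle/vanishing-fold picture --- to verify that the meridian $t_j$ is indeed killed and, crucially, that filling the fiber introduces no additional relations beyond $t_j=1$. This transversality computation, carried out uniformly for nodes, cusps, and tangencies, is the genuinely technical core; once it is in place, the remaining assembly into the presentations is formal.
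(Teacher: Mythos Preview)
Your proposal is correct and matches the paper's approach: the paper does not give a formal proof of this classical theorem but rather develops exactly the sketch you describe, both in Section~\ref{sec:ZVK} (the split extension from the fibration, the identification of the monodromy with the braid action, and the remark that ``filling in the singular fibers is equivalent to killing the generators $t_j$ on the level of $\pi_1$'') and in the Appendix leading up to the statement. Your identification of the fiber-filling step over singular values as the only genuinely technical point is also on target; the paper likewise treats this as understood from the local vanishing-cycle picture rather than arguing it in detail.
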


The fundamental group $\pi_1(F_1\setminus C)$ where $F_1$ is the blow-up of $\PP^2$ at $P$ has expression similar to (\ref{projgroup}) 
The only change was the replacement of Artin's braid group with the braid group of spheres. Blow-up does not change, of course, the fundamental group.

\subsection{Generic projections}\label{A2}

Let $V$ be a smooth surface in $\PP^N$. Let $H\cong\PP^{N-3} \subset \PP^N\setminus V$
be a linear subspace. The Schubert subvariety $B$ of the Grassmannian of linear $N-2$-subspaces in $\PP^N$ 
containing $H$ can be canonically identified with $\PP^2$. Extending the construction in \ref{pencils}, 
one can consider the map $p: \PP^N\setminus H \rightarrow B$ assigning to a point $P$ the linear subspace of $\PP^N$ spanned
by $P$ and $H$ and its restriction $p_V: V \rightarrow B$ to $V$. For a Zariski open subset in $B$, the corresponding linear subspaces are transverse to $V$ and intersect $V$ at $d=\deg (V)$ points. The complement is called the branching curve $C_V$ of $p_V$. 

For $H$ in a Zariski open set in the Grassmannian of linear subspaces $\PP^{N-3}\subset \PP^N$, the branching curve $C_V$ is reduced, and singularities of $C_V$ are ordinary cusps and nodes. Projections corresponding to any subspace $H$ in this set are called {\it generic}. The singularities of $C_V$ can be described in terms of the ramification curve $R$ (\emph{i.e.} the curve $R\subset V$ formed by critical points of $p_V$), as the singularities of  
the image of projection ${p_V}\vert_R: R \rightarrow B$. Note that  ${p_V}\vert_R$ is almost never generic as a map of curves (since generic projections of curves have nodes as the only singularities, while cusps of $C_V$ are unavoidable for almost all $V$). 

Branching curves of generic projections form an important class of plane singular curves, and their fundamental groups have been extensively studied. For past results on fundamental groups of the complements to the branching curves to generic projections, as well as more recent calculations, see \cite{friedman2012fundamental} and the references therein.

\subsubsection{Duals of rational nodal curves.}
Let $C_d \subset \PP^2$ be a rational curve of degree $d$ having only nodes as singularities. Such a curve can be obtained as a generic projection of the smooth rational normal curve in $\PP^d$, $\emph{i.e.}$ the image of $\PP^1$ under the embedding $x_i=s^it^{d-i}$, for $[s:t]\in \PP^1$. It has $d-1 \choose 2$-nodes.
It follows from (\ref{plucker}) and (\ref{plucker2}) that the dual curve $C_d^{\vee}$ has degree $2d-2$, $3d-6$ cusps, and $2(d-2)(d-3)$ nodes. 

\begin{theorem}\cite{zariski1936poincare}
The fundamental group $\pi_1(\PP^2\setminus C_d^{\vee})$ is isomorphic to $B_d(S^2)$, the braid group of sphere on $d$ strands.  
\end{theorem}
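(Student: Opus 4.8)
The plan is to identify $\PP^2\setminus C_d^{\vee}$ with a generic plane section of the complement of the discriminant hypersurface in the space of degree-$d$ divisors on $\PP^1$, whose fundamental group is transparently $B_d(S^2)$, and then invoke the Zariski hyperplane section theorem for complements of hypersurfaces. First I would set up the identification. Write $C_d$ as the image of a generic map $\nu=[f_0:f_1:f_2]\colon\PP^1\to\PP^2$ of degree $d$, obtained by composing the degree-$d$ embedding $\PP^1\into\PP^d$ onto the rational normal curve with a generic linear projection to $\PP^2$; for a generic projection $\nu$ is an immersion whose only double points are the $\binom{d-1}{2}$ nodes of $C_d$. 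A line $L=\{a_0x_0+a_1x_1+a_2x_2=0\}$ pulls back under $\nu$ to the effective divisor of zeros of the binary form $a_0f_0+a_1f_1+a_2f_2$ of degree $d$. Since $f_0,f_1,f_2$ are linearly independent, the assignment $L\mapsto\nu^{*}L$ defines a linear embedding of the dual plane ${\PP^2}^{\vee}$ onto a plane $\Lambda\cong\PP^2$ inside the space $\mathrm{Sym}^d(\PP^1)\cong\PP^d$ of degree-$d$ divisors, and genericity of the projection makes $\Lambda$ a generic plane in $\PP^d$.

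Next I would identify the image of $C_d^{\vee}$. Let $\Delta\subset\PP^d$ be the discriminant hypersurface, i.e. the locus of non-reduced degree-$d$ divisors; its complement $\PP^d\setminus\Delta$ is the unordered configuration space of $d$ points on $\PP^1\cong S^2$, so $\pi_1(\PP^d\setminus\Delta)\cong B_d(S^2)$. Because $\nu$ is an immersion, $\nu^{*}L$ fails to be reduced exactly when $L$ is tangent to $C_d$ (meets some local branch with multiplicity $\geq 2$); crucially, a line through a node of $C_d$ that is transverse to both local branches pulls back to a reduced divisor, so nodes of $C_d$ contribute nothing new to $\Lambda\cap\Delta$. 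Hence $\Lambda\cap\Delta$ is set-theoretically the locus of tangent lines to $C_d$, which by definition is $C_d^{\vee}$, and it has the expected degree $2d-2=\deg\Delta$ since $\Lambda$ is generic. Therefore $\PP^2\setminus C_d^{\vee}\cong\Lambda\setminus\Delta$.

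Finally I would apply the Zariski hyperplane section theorem on fundamental groups of complements of hypersurfaces: for a generic linear $\PP^2=\Lambda\subset\PP^d$ the inclusion $\Lambda\setminus\Delta\into\PP^d\setminus\Delta$ induces an isomorphism on $\pi_1$, obtained by $d-2$ iterated generic hyperplane sections (each valid since the ambient projective dimension stays at least $3$ before each cut). Stringing the three steps together gives $\pi_1(\PP^2\setminus C_d^{\vee})\cong\pi_1(\Lambda\setminus\Delta)\cong\pi_1(\PP^d\setminus\Delta)\cong B_d(S^2)$.

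The main obstacle will be the genericity bookkeeping. One must check that the particular plane $\Lambda$ arising from our projection is not among the finitely many ``bad'' planes excluded by the Zariski hyperplane section theorem, which reduces to knowing that the projection can be, and by hypothesis is, chosen generic enough. Relatedly, one should confirm via the Pl\"ucker data recorded in Appendix~\ref{A1} that $\Lambda$ meets $\Delta$ along a curve with precisely the $3d-6$ cusps and $2(d-2)(d-3)$ nodes of $C_d^{\vee}$, so that $\Lambda\cap\Delta=C_d^{\vee}$ scheme-theoretically and not merely set-theoretically; here the cuspidal stratum of $\Delta$ (divisors with a triple point) is met exactly at the flex tangents of $C_d$, and the self-intersection locus of $\Delta$ (divisors with two double points) at the bitangents. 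A more computational alternative, closer to Zariski's original approach, bypasses $\PP^d$ altogether and instead feeds a generic pencil on $C_d^{\vee}$ into the Zariski--van Kampen theorem (Theorem~\ref{app:Zariskitheorem}), computing the braid monodromy directly from the cusps and nodes of $C_d^{\vee}$, at the cost of an explicit and lengthy monodromy computation.
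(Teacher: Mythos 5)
The paper itself gives no proof of this statement—it is quoted directly from Zariski's 1936 paper—so there is nothing internal to compare against; your argument is essentially the classical one underlying that citation, and it is correct. Identifying $\PP^2\setminus C_d^{\vee}$ with the section of $\PP^d\setminus\Delta$ (the space of reduced degree-$d$ divisors on $\PP^1$, whose fundamental group is by definition $B_d(S^2)$) by the plane $\Lambda$ spanned by $f_0,f_1,f_2$, and then applying the Zariski hyperplane-section theorem (in the generality you invoke, this is the Hamm--L\^{e} theorem; Zariski's original route instead computed the braid monodromy of a pencil, as in Theorem \ref{app:Zariskitheorem}), is exactly the standard proof. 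Two small remarks on your bookkeeping: set-theoretic equality $\Lambda\cap\Delta=C_d^{\vee}$ already suffices to identify the two complements, so the scheme-theoretic refinement is only relevant to the transversality discussion, not to the identification itself; and the genuine genericity issue you flag is real but matches the paper's hypotheses, since the paper takes $C_d$ to be a \emph{generic} projection of the rational normal curve, which is precisely what makes $\Lambda$ transverse to the natural stratification of $\Delta$ (equivalently, gives $C_d^{\vee}$ the expected Pl\"ucker data of $3d-6$ cusps and $2(d-2)(d-3)$ nodes); for an arbitrary nodal rational curve the plane section need not be generic and the argument as written would not apply.
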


Note that $B_d(S^2)\cong B_d/\llangle\sigma_1\cdots\sigma_{d-2}\sigma_{d-1}^2 \sigma_{d-2}\cdots \sigma_1\rrangle$. When $d=3$, $C_3^{\vee}$ is a quartic with 3 cusps and no nodes, and $\pi_1(\PP^2\setminus C_3^\vee)\cong B_3(S^2)$ is a finite group of order 12.

\bibliography{Version1} 

\begin{thebibliography}{ABCAMM18}

\bibitem[ABCAMM18]{artal2018wirtinger}
Enrique Artal~Bartolo, Jos{\'e}~Ignacio Cogolludo-Agust{\'\i}n, and Jorge
  Mart{\'\i}n-Morales.
\newblock Wirtinger curves, artin groups, and hypocycloids.
\newblock {\em Revista de la Real Academia de Ciencias Exactas, F{\'\i}sicas y
  Naturales. Serie A. Matem{\'a}ticas}, 112(3):641--656, 2018.

\bibitem[Bes99]{Bestvina-Artin}
Mladen Bestvina.
\newblock Non-positively curved aspects of {A}rtin groups of finite type.
\newblock {\em Geom. Topol.}, 3:269--302, 1999.

\bibitem[BH99]{Bridson-Haefliger99}
Martin~R. Bridson and Andr\'e Haefliger.
\newblock {\em Metric spaces of non-positive curvature}, volume 319 of {\em
  Grundlehren der mathematischen Wissenschaften [Fundamental Principles of
  Mathematical Sciences]}.
\newblock Springer-Verlag, Berlin, 1999.

\bibitem[BM10]{BradyMcCammond}
Tom Brady and Jon McCammond.
\newblock Braids, posets and orthoschemes.
\newblock {\em Algebr. Geom. Topol.}, 10(4):2277--2314, 2010.

\bibitem[BR96]{BowersRuane}
Philip~L. Bowers and Kim Ruane.
\newblock Boundaries of nonpositively curved groups of the form {$G\times{\bf
  Z}^n$}.
\newblock {\em Glasgow Math. J.}, 38(2):177--189, 1996.

\bibitem[CA11]{cogolludo2011braid}
Jos{\'e}~Ignacio Cogolludo-Agust{\'\i}n.
\newblock Braid monodromy of algebraic curves.
\newblock In {\em Annales math{\'e}matiques Blaise Pascal}, volume~18, pages
  141--209, 2011.

\bibitem[Cul84]{culler1984finite}
Marc Culler.
\newblock Finite groups of outer automorphisms of a free group.
\newblock {\em Contributions to group theory}, 33:197--207, 1984.

\bibitem[CW04]{catanese20043}
Fabrizio Catanese and Bronislaw Wajnryb.
\newblock The 3-cuspidal quartic and braid monodromy of degree 4 coverings.
\newblock {\em Projective varieties with unexpected properties}, pages
  113--129, 2004.

\bibitem[CW17]{CalvezWiest}
Matthieu Calvez and Bert Wiest.
\newblock Acylindrical hyperbolicity and {A}rtin-{T}its groups of spherical
  type.
\newblock {\em Geom. Dedicata}, 191:199--215, 2017.

\bibitem[Dim12]{dimca2012singularities}
Alexandru Dimca.
\newblock {\em Singularities and topology of hypersurfaces}.
\newblock Springer Science \& Business Media, 2012.

\bibitem[EN16]{eisenbud2016three}
David Eisenbud and Walter~D Neumann.
\newblock Three-dimensional link theory and invariants of plane curve
  singularities.(am-110).
\newblock 2016.

\bibitem[FM11]{farb2011primer}
Benson Farb and Dan Margalit.
\newblock {\em A primer on mapping class groups (pms-49)}.
\newblock Princeton University Press, 2011.

\bibitem[FS14]{friedl2014kahler}
Stefan Friedl and Alexander~I Suciu.
\newblock K{\"a}hler groups, quasi-projective groups and 3-manifold groups.
\newblock {\em Journal of the London Mathematical Society}, 89(1):151--168,
  2014.

\bibitem[FT12]{friedman2012fundamental}
Michael Friedman and Mina Teicher.
\newblock On fundamental groups related to degeneratable surfaces: conjectures
  and examples.
\newblock {\em Annali della Scuola Normale Superiore di Pisa-Classe di
  Scienze}, 11(3):565--603, 2012.

\bibitem[Gre07]{greuel2007introduction}
GM~Greuel.
\newblock Introduction to singularities and deformations.
\newblock {\em Springer Monographs in Mathematics/Springer}, 2007.

\bibitem[Gro87]{Gromov}
M.~Gromov.
\newblock Hyperbolic groups.
\newblock In {\em Essays in group theory}, volume~8 of {\em Math. Sci. Res.
  Inst. Publ.}, pages 75--263. Springer, New York, 1987.

\bibitem[HKS16]{Haettel-Kielak-Schwer}
Thomas Haettel, Dawid Kielak, and Petra Schwer.
\newblock The 6-strand braid group is {${\rm CAT}(0)$}.
\newblock {\em Geom. Dedicata}, 182:263--286, 2016.

\bibitem[Jeo23]{jeong2023seven}
Seong~Gu Jeong.
\newblock The seven-strand braid group is {CAT(0)}.
\newblock {\em manuscripta mathematica}, 171(3):563--581, 2023.

\bibitem[Khr85]{khramtsov1985finite}
Dmitrii~Gennad'evich Khramtsov.
\newblock Finite groups of automorphisms of free groups.
\newblock {\em Mathematical notes of the Academy of Sciences of the USSR},
  38:721--724, 1985.

\bibitem[Kle77]{kleiman1977enumerative}
S~Kleiman.
\newblock Enumerative theory of singularities, real and complex singularities
  (proc. ninth nordic summer school/navf sympos. math., oslo, 1976).
\newblock {\em MR}, 58(27960):297--396, 1977.

\bibitem[Lee95]{Leeb1995}
Bernhard Leeb.
\newblock 3-manifolds with(out) metrics of nonpositive curvature.
\newblock {\em Inventiones mathematicae}, 122:277--289, 1995.

\bibitem[Lib21]{libgober2021complements}
Anatoly Libgober.
\newblock Complements to ample divisors and singularities.
\newblock {\em Handbook of Geometry and Topology of Singularities II}, pages
  501--567, 2021.

\bibitem[LIP21]{LIP21}
Claudio Llosa~Isenrich and Pierre Py.
\newblock Mapping class groups, multiple {K}odaira fibrations, and {CAT}(0)
  spaces.
\newblock {\em Math. Ann.}, 380(1-2):449--485, 2021.

\bibitem[Moi81]{Moishezon81}
B.~G. Moishezon.
\newblock Stable branch curves and braid monodromies.
\newblock In Anatoly Libgober and Philip Wagreich, editors, {\em Algebraic
  Geometry}, pages 107--192, Berlin, Heidelberg, 1981. Springer Berlin
  Heidelberg.

\bibitem[Osi16]{Osin16}
D.~Osin.
\newblock Acylindrically hyperbolic groups.
\newblock {\em Trans. Amer. Math. Soc.}, 368(2):851--888, 2016.

\bibitem[Par97]{Paris}
Luis Paris.
\newblock Parabolic subgroups of {A}rtin groups.
\newblock {\em J. Algebra}, 196(2):369--399, 1997.

\bibitem[PV92]{PerronVannier}
Bernard Perron and Jean-Pierre Vannier.
\newblock Groupe de monodromie g\'eom\'etrique des singularit\'es simples.
\newblock {\em C. R. Acad. Sci. Paris S\'er. I Math.}, 315(10):1067--1070,
  1992.

\bibitem[Rob97]{robb1997branch}
Arthur Robb.
\newblock On branch curves of algebraic surfaces.
\newblock {\em Singularities and Complex Geometry (Beijing, 1994), Amer. Math.
  Soc./Int. Press Stud. Adv. Math}, 5:193--221, 1997.

\bibitem[vdL83]{vanderLek}
Harm van~der Lek.
\newblock {\em The homotopy type of complex hyperplane complements}.
\newblock PhD thesis, University of Nijmegen, 1983.

\bibitem[Waj99]{wajnryb1999artin}
Bronislaw Wajnryb.
\newblock Artin groups and geometric monodromy.
\newblock {\em Inventiones mathematicae}, 138:563--571, 1999.

\bibitem[Zar29]{zariski1929problem}
Oscar Zariski.
\newblock On the problem of existence of algebraic functions of two variables
  possessing a given branch curve.
\newblock {\em American Journal of Mathematics}, 51(2):305--328, 1929.

\bibitem[Zar36]{zariski1936poincare}
Oscar Zariski.
\newblock On the poincar{\'e} group of rational plane curves.
\newblock {\em American Journal of Mathematics}, 58(3):607--619, 1936.

\bibitem[Zhu23]{zhu2023conditions}
Kejia Zhu.
\newblock Conditions on the monodromy for a surface group extension to be
  {CAT(0)}.
\newblock {\em Proceedings of the American Mathematical Society},
  151(11):4643--4651, 2023.

\bibitem[Zie81]{Zieschang}
Heiner Zieschang.
\newblock {\em Finite groups of mapping classes of surfaces}, volume 875 of
  {\em Lecture Notes in Mathematics}.
\newblock Springer-Verlag, Berlin, 1981.

\bibitem[Zim81]{zimmermann1981homoomorphismen}
Bruno Zimmermann.
\newblock {\"U}ber hom{\"o}omorphismen n-dimensionaler henkelk{\"o}rper und
  endliche erweiterungen von schottky-gruppen.
\newblock {\em Commentarii Mathematici Helvetici}, 56:474--486, 1981.

\end{thebibliography}
\bibliographystyle{alpha} 
\end{document}